\documentclass[oneside,openany,a4paper]{amsart}
\usepackage{amssymb}
\usepackage[centertags]{amsmath}
\usepackage{amsthm}
\usepackage{amsfonts}
\usepackage{eucal}
\usepackage{mathrsfs}

\usepackage[all,cmtip]{xy}
\usepackage{graphicx, eepic}

\usepackage{lmodern}
\usepackage[utf8]{inputenc}
\usepackage[T1]{fontenc}

\usepackage{bbm}
\usepackage{pdfpages}
\usepackage{titletoc}
\usepackage{booktabs}
\usepackage[shortlabels]{enumitem}
\usepackage{mathtools}
\usepackage{thmtools}

\usepackage{epstopdf}
\usepackage{epsfig}
\usepackage{microtype}

\usepackage{hyperref}
\hypersetup{colorlinks}
\hypersetup{
    bookmarks=true,         
    unicode=false,          
    pdftoolbar=true,        
    pdfmenubar=true,        
    pdffitwindow=false,     
    pdfstartview={FitH},    
    pdftitle={My title},    
    pdfauthor={Author},     
    pdfsubject={Subject},   
    pdfcreator={Creator},   
    pdfproducer={Producer}, 
    pdfkeywords={keyword1} {key2} {key3}, 
    pdfnewwindow=true,      
    colorlinks=true,       
    linkcolor=red,          
    citecolor=magenta,        
    filecolor=violet,      
    urlcolor=blue      
}
\usepackage{multirow, url}
\usepackage{textcomp}

\DeclareMathAlphabet{\cmcal}{OMS}{cmsy}{m}{n}

\usepackage[hang,flushmargin]{footmisc} 


\newtheoremstyle{thm}
  {3pt}
  {3pt}
  {\em}
  {0pt}
  {\bfseries}
  {}
  {5pt}
  {}
\newtheoremstyle{rem}
  {3pt}
  {3pt}
  {}
  {0pt}
  {\bfseries}
  {}
  {5pt}
  {}
  

\newtheorem{thm}{Theorem}[section]
\newtheorem{cor}[thm]{Corollary}
\newtheorem{lem}[thm]{Lemma}
\newtheorem{prop}[thm]{Proposition}

\newtheorem{assu}[thm]{Assumption}

\theoremstyle{definition}

\theoremstyle{remark}
\newtheorem{rem}[thm]{{Remark}}

\newtheorem{exam}[thm]{\bf{Example}}

\numberwithin{equation}{section} \numberwithin{table}{section}
	
\newtheorem*{thm*}{\bf{Theorem}}
\newtheorem*{rem*}{Remark}
\newtheorem*{rems*}{Remarks}
\newtheorem*{exam*}{Example}
\newtheorem*{exams*}{Examples}


\newcommand{\F}{{\mathbb{F}}}

\newcommand{\Q}{{\mathbb{Q}}}

\newcommand{\bX}{{\mathbb{X}}}

\newcommand{\Z}{{\mathbb{Z}}}

\newcommand{\fb}{{\mathfrak{b}}}

\newcommand{\fp}{{\mathfrak{p}}}

\newcommand{\fP}{{\mathfrak{P}}}

\newcommand{\cE}{{\mathcal{E}}}

\newcommand{\cK}{{\mathcal{K}}}

\newcommand{\cM}{{\mathcal{M}}}

\newcommand{\cO}{{\cmcal{O}}}

\newcommand{\cQ}{{\mathcal{Q}}}

\def\a{\alpha}
\def\b{\beta}

\def\d{\delta}
\def\e{\epsilon}

\def\g{\gamma}
\def\k{\kappa}
\def\z{\zeta}
\def\p{\varphi}
\def\vp{\varpi}


\newcommand{\Gm}{{\mathbb{G}}_{\operatorname{m}}}

\newcommand{\zmod}[1]{{\Z/{#1}\Z}}


\newcommand{\surj}{\twoheadrightarrow}
\newcommand{\arinj}{\ar@{^(->}}
\newcommand{\arsurj}{\ar@{->>}}
\newcommand{\arsub}{\ar@{}[r]|-*[@]{\subset}}
\newcommand{\arsup}{\ar@{}[r]|-*[@]{\supset}}
\newcommand{\arcap}{\ar@{}[d]|-*[@]{\subset}}
\newcommand{\arcup}{\ar@{}[u]|-*[@]{\subset}}
\newcommand{\arin}{\ar@{}[u]|-*[@]{\in}}


\renewcommand{\pmod}[1]{{\,(\operatorname{mod}\hspace*{0.7mm} {#1})}}
\renewcommand{\mod}[1]{{\,\operatorname{mod}\hspace*{0.7mm} {#1}}}



\newcommand{\Ker}{\operatorname{Ker}}

\newcommand{\Ext}{{\operatorname{Ext}}}
\newcommand{\Hom}{{\operatorname{Hom}}}
\newcommand{\Gal}{{\operatorname{Gal}}}

\newcommand{\Aut}{{\operatorname{Aut}}}
\newcommand{\GL}{{\operatorname{GL}}}

\newcommand{\SL}{{\operatorname{SL}}}

\newcommand{\Spec}{{\operatorname{Spec}}}

\newcommand{\inv}{{\operatorname{inv}}}
\newcommand{\ord}{{\operatorname{ord}}}

\newcommand{\Frob}{{\operatorname{Frob}}}

\renewcommand{~}{\hspace{0.3mm}}

\newcommand{\rra}{\longrightarrow}
\newcommand{\wt}{\widetilde}

\newcommand{\ov}{\overline}

\mathchardef\hyp="2D

\newcommand{\hs}[1]{\hspace{#1 mm}}
\newcommand{\h}{\hspace{5mm}}
\newcommand{\vv}{\vspace{5mm}}
\newcommand{\xyv}[1]{\xymatrixrowsep{#1 pc}}
\newcommand{\xyh}[1]{\xymatrixcolsep{#1 pc}}


\newcommand{\qa}{{\quad \text{and} \quad}}

\newcommand{\abs}[1]{| #1 |}

\newcommand{\leg}[2]{\genfrac(){}{}{#1}{#2}}

\newcommand{\bbM}{\mathbb{M}}

\newcommand{\mcG}{\mathcal{G}}
\newcommand{\mcC}{\mathcal{C}}
\newcommand{\mcD}{\mathcal{D}}

\newcommand{\bfg}{\mathbf{g}}
\newcommand{\bfa}{\mathbf{a}}

\newcommand{\bfx}{\mathbf{x}}
\newcommand{\uHom}{\underline{\Hom}}

\newcommand{\Ob}{\operatorname{Ob}}
\newcommand{\id}{\operatorname{id}}
\newcommand{\oH}{\operatorname{H}}
\newcommand{\oC}{\operatorname{C}}

\newcommand{\act}[2]{{}^{#1}\!{#2}}

\newcommand{\ur}{\mathrm{ur}}

\newcommand{\ad}{\operatorname{Ad}}

\def\vvdots{\setbox0=\hbox{$\vdots$}\ht0=8pt\box0}

\let\:=\colon

\def\Un{ \mathrm{Un} }

\def\cQ{ {\mathcal Q} }

\def\bX{ \bar{X}}
\def\Ext{{ \operatorname{Ext} }}

\def\Spec{{ \operatorname{Spec} }}

\def\a{{ \alpha }}
\def\b{{ \beta} }
\def\g{{ \gamma }}
\def\d{{ \delta }}

\def\e{{ \epsilon }}
\def\F{ {\mathbb F} }

\def\Un{ \mathrm{Un} }

\def\z{\zeta}

\def\bs{ \backslash}

\def\G{{ \Gamma }}

\def\cE{ {\mathcal E}}

\def\Z{{ \mathbb{Z}}}

\def\bq{\begin{quote}}
\def\eq{\end{quote}}
\def\Aut{ \operatorname{Aut}}

\def\Q{\mathbb{Q}}

\def\invlim{\varprojlim}

\def\P{ \mathbb{ P}}

\def\be{\begin{equation}}
\def\ee{\end{equation}}
\def\k{ \kappa}

\def\b{ \bar}

\def\L{\Lambda}

\def\a{\alpha}

\def\b{\beta}

\def\bs{\begin{slide}}
\def\es{\end{slide} }

\def\Gm{\mathbb{G}_m}

\numberwithin{equation}{section}

\def\ord{\mathrm{ord}}

\def\ad{\mathrm{ad}}

\def\beq{\begin{equation}}
\def\eeq{\end{equation}}
\def\O{\cmcal{O}}

\def\fP{\mathfrak{P}}

\usepackage{graphicx}

\def\Ad{\mathrm{Ad}}

\def\ext2et{\Ext^2_{\Un(\bX)}}

\def\ms{\medskip}

\def\cK{{\mathcal K}}

\def\<{\langle}
\def\>{\rangle}
\def\Zn{\Z/n\Z}
\def\xx{{\times }}
\def\msloc{\mathcal{M}_S^{loc}}
\def\nZ{\frac{1}{n}\Z/\Z}

\def\Ainfty{A^{\infty}}

\setlength\parindent{0pt}

\title{Arithmetic Chern-Simons Theory II}

\author{Hee-Joong Chung}
\address{ H.J.C: Korea Institute for Advanced Study,
85 Hoegiro, Dongdaemun-gu,
Seoul 02455,
Republic of Korea}
\author{Dohyeong Kim}
\address{D.K: Department of
Mathematics, University of Michigan,
2074 East Hall,
530 Church Street,
Ann Arbor, MI 48109-1043 , U.S.A. }
\author{Minhyong Kim}
\address{M.K: Mathematical Institute, University of Oxford,
Woodstock Road, Oxford OX2 6GG, UK, and Korea Institute for Advanced Study,
85 Hoegiro, Dongdaemun-gu,
Seoul 02455,
Republic of Korea }
\author{Jeehoon Park}
\address{J.P: Department of Mathematics, 
Pohang University of Science and Technology,
77 Cheongam-ro, Nam-gu, Pohang, Gyeongbuk, Republic of Korea 37673 }
\author{Hwajong Yoo}
\address{H.Y: IBS Center for Geometry and Physics,
Mathematical Science Building, Room 108,
Pohang University of Science and Technology,
77 Cheongam-ro, Nam-gu, Pohang, Gyeongbuk, Republic of Korea 37673 }
\subjclass[2000]{Primary 11R04, 11R23, 11R37 ; Secondary 81T45}
\begin{document}                                                                          
\maketitle
\begin{flushright}
with Appendix \ref{sec:appendix B} by Behrang Noohi
\end{flushright}
\begin{abstract}
In this paper, we apply ideas of Dijkgraaf and Witten \cite{witten, DW} on $3$ dimensional topological quantum field theory to arithmetic curves, that is, the spectra of rings of integers in algebraic number fields. In the first three sections, we define  classical Chern-Simons actions on spaces of Galois representations. In the subsequent sections, we give formulas for computation in a small class of cases  and point towards some  arithmetic applications.
\end{abstract}

\setcounter{tocdepth}{1}
\tableofcontents
\section{The arithmetic Chern-Simons action: introduction and definition}\label{sec:classical case}
The purpose of this paper is to cast in concrete mathematical form the ideas presented in the preprint \cite{kim5}. The reader is referred to that paper for motivation and speculation. Since there is no plan to submit it for separate publication, we repeat here the basic constructions before going  on to a family of examples. 
This paper adheres, however, to a rather strict mathematical presentation. As we remind the reader below, the analogies in the background have come to be somewhat well-known under the heading of `arithmetic topology'. The emphasis of this paper, however, will be less on analogies, and more on  the possibility that specific technical tools of topology and physics can be imported into number theory.
\ms

Let $X=\Spec(\cmcal{O}_F)$, the spectrum of the ring of integers in a number field $F$. We  assume that $F$ is totally imaginary. Denote by $\Gm$ the \'etale sheaf that associates to a scheme the units in the global sections of its coordinate ring. We have the following canonical isomorphism (\cite[p. 538]{mazur}):
\[\label{eqn:*}
\inv: H^3(X, \Gm)\simeq \Q/\Z. \tag{$*$}
\]
This map is deduced from the `invariant' map of local class field theory. We will therefore use the same name for a  range of isomorphisms having the same essential nature, for example,
\[\label{eqn:**}
\inv:H^3(X, \Z_p(1))\simeq \Z_p, \tag{$**$}
\]
where $\Z_p(1)=\invlim_i \mu_{p^i}$, and  $\mu_n\subset \Gm$ is the sheaf of $n$-th roots of 1. This follows from the exact sequence
\[
0\to \mu_n \to \Gm
\stackrel{~(\cdot)^n}{\to}\Gm \to \Gm/(\Gm)^n \to 0.
\]
That is, according to \textit{loc. cit.}, 
\[
H^2(X,\Gm)=0,
\] while by \textit{op. cit.}, p. 551, we have 
$$
H^i(X,\Gm/(\Gm)^n)=0
$$ 
for $i\geq 1$. If we break up the above into two short exact sequences,
$$
0\to \mu_n\to \Gm\stackrel{(\cdot)^n}{\to}\cK_n\to 0,
$$
and
$$
0\to \cK_n \to \Gm\to \Gm/(\Gm)^n\to 0,
$$
we deduce 
$$
H^2(X, \cK_n)=0,
$$
from which it follows that
$$
H^3(X, \mu_n)\simeq \nZ,
$$
the $n$-torsion inside $\Q/\Z$. Taking the inverse limit over $n=p^i$ gives the second isomorphism above.  
The pro-sheaf $\Z_p(1)$ is a very familiar coefficient system for \'etale cohomology and (\ref{eqn:**}) is reminiscent of the fundamental class of a compact oriented three manifold for singular cohomology. Such an analogy was noted by Mazur around 50 years ago \cite{mazur2} and has been developed rather systematically by a number of mathematicians, notably, Masanori Morishita \cite{morishita}. Within this circle of ideas is included the analogy between knots and primes, whereby the map
\[
\Spec(\cmcal{O}_F/\mathfrak{P}_v)\rightarrowtail X
\]
from the residue field of a prime $\mathfrak{P}_v$ should be similar to the inclusion of a knot.  Let $F_v$ be the completion of $F$ at the prime $v$ and $\cmcal{O}_{F_v}$ its valuation ring. If one takes this analogy seriously (as did Morishita), the map 
$$
\Spec(\cmcal{O}_{F_v})\to X,
$$ 
should be similar to the inclusion of a handle-body around the knot, whereas 
$$
\Spec(F_v)\to X
$$ 
resembles the inclusion of its boundary torus\footnote{It is not clear to us that the topology of the boundary should really be a torus. This is reasonable if one thinks of the ambient space as a three-manifold. On the other hand, perhaps it's possible to have a notion of a knot in a {\em homology three-manifold} that has an exotic tubular neighbourhood?}. Given a finite set $S$ of primes, we consider the scheme
\[
X_S:=\Spec(\cmcal{O}_F[1/S])=X\setminus \{\mathfrak{P}_v\}_{v\in S}.
\]
Since a link complement is homotopic to the complement of a tubular neighbourhood, the analogy is then forced on us between $X_S$ and a  three manifold with boundary given by a union of tori, one for each `knot' in $S$. These of course are basic morphisms in $3$ dimensional topological quantum field theory \cite{atiyah}. From this perspective, perhaps the coefficient system $\Gm$ of the first isomorphism should have reminded us of the $S^1$-coefficient important in Chern-Simons theory \cite{witten, DW}. A more direct analogue of $\Gm$ is the sheaf $\cmcal{O}_M^\xx$ of invertible analytic functions on a complex variety $M$. However, for compact K\"ahler manifolds, the comparison isomorphism 
$$
H^1(M, S^1)\simeq H^1(M, \cmcal{O}_M^\xx)_0,
$$
where the subscript refers to the line bundles with trivial topological Chern class, is a consequence of Hodge theory. This indicates that in the \'etale setting with no natural constant sheaf of $S^1$'s, the familiar $\Gm$ has a topological nature, and can be regarded as a substitute\footnote{Recall, however, that it is of significance in Chern-Simons theory that one side of this isomorphism is purely topological while the other has an analytic structure.}. One problem, however, is that  the $\Gm$-coefficient computed directly gives divisible torsion cohomology, whence the need for considering coefficients like $\Z_p(1)$ in order to get functions of geometric objects having an analytic nature as arise, for example, in the theory of torsors for motivic fundamental groups \cite{CK, kim1, kim2, kim3, kim4}.
\ms

We now move to the  definition of the arithmetic Chern-Simons action. Let
\[\pi=\pi_1(X, \fb),
\]  
be the profinite \'etale fundamental group of $X$,
where we take 
$$
\fb: \Spec(\ov F)\to X
$$ 
to be the geometric point coming from an algebraic closure of $F$. 
Assume now that the group $\mu_n({\ov F})$ of $n$-th roots of $1$ is in $F$ and fix a trivialisation 
$\zeta_n: \zmod n \simeq \mu_n$. 
This induces the isomorphism
$$
\inv: H^3(X, \zmod n) \simeq H^3(X, \mu_n) \simeq \frac{1}{n}\Z/\Z.
$$
Now let $A$ be a finite group and fix a class $c\in H^3(A, \zmod n)$.
Let 
$$
\mathcal{M}(A):= \Hom_{cont}(\pi, A)/A
$$
be the set of isomorphism classes of principal $A$-bundles over $X$. Here, the subscript refers to continuous homomorphisms, on which $A$ is acting by conjugation. For $[\rho]\in \mathcal{M}(A)$, we get a class
$$
\rho^*(c)\in H^3(\pi, \Z/n\Z)
$$ 
that depends only on the isomorphism class $[\rho]$. Denoting by $\inv$ also the composed map 
\[
\xymatrix{
H^3(\pi,  \zmod n) \ar[r] & H^3(X, \zmod n) \ar[r]^-{\inv}_-{\simeq} & \frac{1}{n}\Z/\Z.
}
\]
We get thereby a function
\[
\xyv{0.15}
\xymatrix{
CS_c: \mathcal{M}(A) \ar[r] & \nZ;\\
\hspace{12mm} [\rho] \hspace{2mm}\ar@{|->}[r] & \inv(\rho^*(c)).
}
\]
This is the basic and easy case of the classical Chern-Simons action\footnote{The authors realise that this terminology is likely to be unfamiliar, and maybe even appears pretentious to number-theorists. However, it does seem to encourage the reasonable view that  concepts and structures from geometry and physics can be specifically useful in number theory.} in the arithmetic setting.
\ms

Sections \ref{sec:with boundaries}  sets down some definitions for `manifolds with boundary', that is, $X_S$ as above. In fact, it turns out that the Chern-Simons action with boundaries is necessary for the computation of the action even in the `compact' case, in a manner strongly reminiscent of computations in topology (see \cite[Theorem 1.7 (d)]{FQ}, for example).  That is, we will compute the Chern-Simons invariant of a representation $\rho$ of $\pi$ using a suitable decomposition
$$X``="X_S\cup [\cup_v \Spec(\cO_{F_v})]$$
and restrictions of $\pi $ to $X_S$ and the $\Spec(\cO_{F_v})$.

\ms To describe the construction, we need more notations.
We assume that all primes of $F$ dividing $n$ are in the finite set of primes $S$.
Let 
$$
\pi_S:=\pi_1(X_S, \fb)
$$ 
and 
$$
\pi_v=\Gal({\ov F}_v/F_v) 
$$ 
equipped with maps
$$
i_v: \pi_v\to \pi_S
$$
given by choices of embeddings ${\ov F}\rightarrowtail {\ov F}_v$.  The collection 
$$
\{i_v\}_{v\in S}
$$ 
will be denoted by $i_S$.
There is a natural quotient map 
$$
\k_S:\pi_S \to \pi.
$$
Let
$$
Y_S(A):=\Hom_{cont}(\pi_S, A)
$$ 
and denote by
$\mathcal{M}_S(A)$ the action groupoid whose objects are the elements of $Y_S(A)$ with morphisms given by the conjugation action of $A$. We also have the local version
$$
Y_S^{loc}(A) :=\prod_{v\in S} \Hom_{cont}(\pi_v, A)
$$
as well as the action groupoid $\mathcal{M}_S^{loc}(A)$  with objects 
$Y_S^{loc}(A)$ and morphisms given by the action of $A^S:=\prod_{v\in S} A$
conjugating the separate components in the obvious sense.
 Thus, we  have the restriction functor 
 $$
 r_S: \mathcal{M}_S(A)\to \mathcal{M}_S^{loc}(A),
 $$
 where a homomorphism $\rho: \pi_S\to A$ is restricted to the collection
 $$
 r_S(\rho)=i_S^*\rho:=(\rho\circ i_v)_{v\in S}.
 $$ 
\ms

We will construct, in Section \ref{sec:with boundaries}, a functor $L$ from $\msloc(A)$ to the $\nZ$-torsors as a finite arithmetic version of the Chern-Simons line bundle \cite{FQ}  over $\mathcal{M}_S^{loc}(A)$. To a global representation $\rho\in \mathcal{M}_S(A)$, the Chern-Simons action will then associate an element (\ref{CSA with boundary}) 
$$CS_c([\rho]) \in L(r_S(\rho)).$$
\ms

Now, given
 $[\rho] \in \cM(A)$, we pull it back to $[\rho \circ \k_S] \in \cM_S(A)$ and apply the Chern-Simons action with boundary  to get an element 
\[
CS_c([\rho \circ \k_S]) \in L([r_S(\rho \circ \k_S) ]).
\]
On the other hand, for each $v \in S$, we can pull back $\rho$ to a local unramified representation
\[
\rho_v^{\ur}: \pi_v^{\ur} \to \pi \to A,
\]
where $\pi_v^{\ur}$ is the unramified quotient of $\pi_v$. The extra structure of the unramified representation will then allow us to canonically associate an element
\[
\sum_{v\in S} (\beta_v) \in L([r_S(\rho \circ \k_S)]),
\]
which can be interpreted as the Chern-Simons action of $(\rho_v^{\ur})_{v\in S}$ on $\cup_{v\in S} \Spec(\cO_{F_v})$.

\begin{thm}[The Decomposition Formula]\label{main theorem}
 Let $A$ be a finite group and fix a class $c\in H^3(A, \zmod n)$. Then
\[
CS_{c} ([\rho])= \sum\limits_{v \in S}(\beta_v) -CS_c([\rho \circ \k_S])
\]
for $[\rho] \in \cM(A)$.
\end{thm}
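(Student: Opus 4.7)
The plan is to prove the formula as a Mayer-Vietoris / reciprocity statement for the \'etale cohomology of $X$ with respect to the covering $X = X_S \cup \bigcup_{v \in S} \Spec(\cO_{F_v})$, whose overlap is $\bigsqcup_{v \in S} \Spec(F_v)$. This is the arithmetic analogue of computing the Chern-Simons invariant of a closed $3$-manifold by cutting along boundary tori, as in \cite[Theorem 1.7(d)]{FQ}. The strategy is to place all three terms of the formula in the common $\nZ$-torsor $L(r_S(\rho \circ \k_S))$ by expressing each as the image of a pullback of a single $3$-cocycle representing $c$, and then to conclude by the additivity of the invariant map $\inv$ along this covering.

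Concretely, I would fix an explicit cocycle $\alpha \in Z^3(A, \zmod{n})$ representing $c$, so that the global term is $CS_c([\rho]) = \inv(\rho^*[\alpha])$. The Chern-Simons line bundle $L$ built in Section \ref{sec:with boundaries} is designed so that any $\sigma \in \Hom(\pi_S, A)$ produces a canonical element $CS_c([\sigma]) \in L(r_S(\sigma))$ from the cochain $\sigma^*(\alpha)$ together with its restrictions to the $\pi_v$'s. Applied to $\sigma = \rho \circ \k_S$ this gives the second summand on the right-hand side.

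The local contributions arise from the fact that, because $\rho \circ \k_S$ factors through $\pi$, each restriction $\rho_v = \rho \circ \k_S \circ i_v$ kills the inertia at $v$ and descends to the unramified quotient $\pi_v^{\ur}$. Equivalently, $\rho_v$ extends to a representation of the \'etale fundamental group of the ``handle-body'' $\Spec(\cO_{F_v})$, whose cohomology in degree $3$ with coefficients in $\zmod{n}$ vanishes. Pulling $\alpha$ back along this extension and reading off its boundary restriction to $\pi_v$ yields a canonical element $\beta_v \in L(\rho_v)$, and the $\beta_v$ assemble to give $\sum_v \beta_v \in L(r_S(\rho \circ \k_S))$.

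With all three quantities in the same $\nZ$-torsor, the formula becomes the statement that the difference $\sum_v \beta_v - CS_c([\rho \circ \k_S])$ of two trivializations of $L(r_S(\rho \circ \k_S))$ equals $CS_c([\rho])$. At the cochain level this follows because the two trivializations differ by a global $3$-cocycle on $\pi$ that agrees with $\rho^*(\alpha)$ modulo coboundaries which do not affect the invariant; after applying $\inv$ this is precisely the Mayer-Vietoris additivity $\inv_X = \inv_{X_S} - \sum_v \inv_{\Spec(\cO_{F_v})}$ along the covering, the sign reflecting that $\Spec(F_v)$ appears in $\partial \Spec(\cO_{F_v})$ with orientation opposite to its appearance in $\partial X_S$. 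This additivity is a reformulation of the reciprocity law underlying $\inv$. The main obstacle is the bookkeeping step of verifying that $L$, the global section $CS_c([\rho \circ \k_S])$, and the local sections $\beta_v$ coming from the unramified extensions are all compatibly identified as trivializations of the same $\nZ$-torsor with the correct relative signs; once those compatibilities are organized, the theorem reduces to the additivity of $\inv$ along the Mayer-Vietoris covering.
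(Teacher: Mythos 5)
Your high-level picture is the right one and matches the conceptual summary the paper itself gives (Remark 2 of Section \ref{sec: towards computation}): $CS_c([\rho])$ is the discrepancy between a canonical unramified local trivialization and a non-canonical ramified global trivialization of $\rho^*(\alpha)$, and the choice of global trivialization is irrelevant by reciprocity. You also correctly identify the two vanishing facts that make those trivializations exist and be canonical ($H^3(\pi_S,\zmod n)=0$ globally, and $H^3(\pi_v/I_v,\mu_n)=H^2(\pi_v/I_v,\mu_n)=0$ locally).

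However there is a genuine gap at the heart of your argument. The identity
\[
\inv(z) \;=\; \sum_{v\in T}\inv_v\bigl(b_{-,v}-b_{+,v}\bigr)
\]
is the actual mathematical content of the theorem, and you invoke it as a ``Mayer--Vietoris additivity $\inv_X = \inv_{X_S}-\sum_v\inv_{\Spec(\cO_{F_v})}$'' without establishing it. This is not a standard Mayer--Vietoris fact to cite: the decomposition $X = X_S\cup\bigcup_v\Spec(\cO_{F_v})$ with overlap $\bigsqcup_v\Spec(F_v)$ is a topological analogy, not an honest open cover in the \'etale site, so there is no literal Mayer--Vietoris sequence to appeal to. What the paper actually uses is the localization/excision mechanism: one forms the mapping-fibre complex $C^*_c(\pi_T,M)$, uses the short exact sequence $0\to j_!j^*M\to M\to i_*i^*M\to 0$ and the fact that each closed fibre $k_v$ has cohomological dimension $1$ to get the isomorphism $H^3_c(U,\mu_n)\simeq H^3(X,\mu_n)$, and then the surjection $\prod_v H^2(\pi_v,\mu_n)\twoheadrightarrow H^3_c(U,\mu_n)$ (whose kernel is the image of $H^2(\pi_T,\mu_n)$) to read off $\inv(z)$ as the sum of local invariants of the $2$-cocycles $x_v=b_{-,v}-b_{+,v}$. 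Without some such input your proof reduces the theorem to an assertion that is not weaker than the theorem itself.

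A smaller but real inaccuracy: you say the two trivializations ``differ by a global $3$-cocycle on $\pi$ that agrees with $\rho^*(\alpha)$ modulo coboundaries.'' Two elements of the $\nZ$-torsor $L(r_S(\rho\circ\k_S))$ differ by an element of $\nZ$, and at the cochain level the relevant comparison is between the local $2$-cochains $b_{-,v}$ and $b_{+,v}$, whose difference is a $2$-cocycle in $Z^2(\pi_v,\mu_n)$; no global $3$-cocycle appears at that stage. The role of the global $3$-cocycle $\rho^*(\alpha)$ is upstream: it is the class whose invariant is being computed, and the reciprocity law (the statement that classes in $H^2(\pi_S,\zmod n)$ have vanishing total local invariant) is what makes the choice of $b_+$ immaterial, not the source of the additivity formula.
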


Section \ref{sec: towards computation} is devoted to a proof of Theorem \ref{main theorem}.
The key point of this formula is that $CS_c([\rho])$ can be computed as the difference between two trivialisations of the torsor, a ramified global trivialisation and an unramified local trivialisation.

\ms
In Section \ref{sec: examples}, we use this theorem to compute the Chern-Simons action for a class of examples. It is amusing to note the form of the action when $A$ is finite cyclic. That is,  let $A=\zmod n$, $\a\in H^1(A,\Z/n\Z)$ the class of the identity, and $\b\in H^2(A, \Z/n\Z)$ the class of the extension
$$
\xymatrix{
0 \ar[r] & \zmod n \ar[r]^-n & \zmod {n^2} \ar[r] & A \ar[r] & 0.
}
$$
Then $\b=\d\a$, where $\d:H^1(A, \Zn) = H^1(A, A) \to H^2(A,\Zn)$ is the boundary map arising from the extension. Put 
$$
c:=\a\cup \b=\a\cup \d \a\in H^3(A, \zmod n).
$$
Then
$$
CS_c([\rho])=\inv[ \rho^*(\a) \cup \d \rho^*(\a)],
$$
in close analogy to the formulas of abelian Chern-Simons theory.
\ms

However, our computations are not limited to the case where $A$ is an abelian cyclic group. 
Along  similar lines, we will provide an infinite family of number fields $F$  and representations $\rho$ such that  $CS_c([\rho])$ is non-vanishing for $[\rho]\in \cM(A)$ with a different class $c \in H^3(A,\zmod 2)$ and both abelian $A$ (see Propositions \ref{abelian1}, \ref{prop:V4 general example}, and \ref{prop:V4 example3}) and non-abelian $A$ (see Proposition \ref{nonabelian}).
\ms

In Section \ref{sec: application}, we provide arithmetic applications to a class of Galois embedding problems using the fact that the existence of an unramified extension forces a Chern-Simons invariant to be zero. 

\ms
 In this paper, we do not develop a $p$-adic theory in the case where the boundary is empty. In future papers, we hope to apply  local trivialisations using Selmer complexes to remedy this omission and complete the theory begun in Section \ref{sec:p-adic case}. To get actual $p$-adic functions, one needs of course to come to an understanding of explicit cohomology classes on $p$-adic Lie groups,  possibly by way of the theory of Lazard \cite{lazard}.
 Suitable quantisations of the theory of this paper in a manner amenable to arithmetic applications will be explored as well in future work, as in  \cite{CKKPPY},  where a precise arithmetic analogue of a `path-integral formula' for arithmetic linking numbers is proved. In that preprint, a  connection is made also to the class invariant homomorphism from additive Galois module structure theory. A pro-$p$ version of this homomorphism
is related to $p$-adic $L$-functions and heights, providing some evidence for the speculation from \cite{kim5}.

\section{The arithmetic Chern-Simons action: boundaries}\label{sec:with boundaries}


We keep the notations as in the introduction. 
We will now employ a cocycle $c\in Z^3(A, {\zmod n})$ to associate a $\nZ$-torsor  to each point of $Y_S^{loc}(A)$ in an $A^S$-equivariant manner. 
We use the notation
$$
C^i_S:=\prod_{v\in S} C^i(\pi_v, {\zmod n})
$$
for the continuous cochains,
$$
Z^i_S:=\prod_{v\in S} Z^i(\pi_v, {\zmod n})\subset C^i_S
$$
for the cocycles, and
$$
B^i_S:=\prod_{v\in S} B^i(\pi_v, {\zmod n})\subset Z^i_S\subset C^i_S
$$
for the coboundaries.
In particular, we have the coboundary map (see Appendix \ref{sec:Appendix A} for the sign convention)
$$
d:C^2_S\to Z^3_S.
$$

Let $\rho_S:=(\rho_v)_{v\in S}\in Y_S^{loc}(A)$ and put
$$
c\circ \rho_S:=(c\circ \rho_v)_{v\in S},
$$ 
$$
c\circ \Ad_a:=(c\circ \Ad_{a_v})_{v\in S}
$$
 for $a=(a_v)_{v\in S}\in A^S$, where $\Ad_{a_v}$ refers to the conjugation action.  To define the arithmetic Chern-Simons line associated to $\rho_S$, we need the intermediate object
$$
H(\rho_S):=d^{-1}(c\circ \rho_S)/B^2_S\subset C^2_S/B^2_S.
$$
This is a torsor for
$$
H^2_S:=\prod_{v\in S} H^2(\pi_v, {\zmod n} )\simeq \prod_{v\in S} \nZ
$$
(\cite[Theorem (7.1.8)]{NSW}).
We then use the sum map
$$
\Sigma: \prod_{v\in S} \nZ\to \nZ
$$
to push this out to a $\nZ$-torsor. That is, define
\begin{equation} \label{localtorsor}
L(\rho_S):=\Sigma_*[ H(\rho_S)  ].
\end{equation}
The natural map $H(\rho_S)\to L(\rho_S)$ will also be denoted by the sum symbol $\Sigma$.
\ms

In fact, $L$ extends to a functor from $\mathcal{M}_S^{loc}(A)$ to the category of $\nZ$-torsors.
To carry out this extension,  we just need to extend $H$ to a functor to $H^2_S$-torsors.
 According to Appendices \ref{sec:Appendix A} and \ref{sec:appendix B}, for $a=(a_v)_{v\in S}\in A^S$ and each $v$, there is an element
$h_{a_v}\in C^2(A, {\zmod n})/B^2(A, {\zmod n})$ such that
$$
c\circ \Ad_{a_v}=c+ dh_{a_v}.
$$
Also, 
$$
h_{a_vb_v}=h_{a_v}\circ \Ad_{b_v}+ h_{b_v}.
$$
Hence, given $a: \rho_S\to \rho_S',$  so that $\rho_S'=\Ad_a \circ \rho_S$, we define
$$
H(a): H(\rho_S)\to H(\rho_S')
$$
to be the map induced by
$$
x\mapsto x'=x+(h_{a_v}\circ \rho_v)_{v\in S}.
$$
Then
$$
dx'=dx+(d(h_{a_v}\circ \rho_v))_{v\in S}=(c\circ \rho_v)_{v\in S}+((dh_{a_v})\circ \rho_v)_{v\in S}=(c\circ \Ad_{a_v}\circ \rho_v)_{v\in S}.
$$
So 
$$
x'\in d^{-1}(c\circ \rho'_S)/B^2_S,
$$ 
and by the formula above, it is clear that $H$ is a functor. That is, $ab$ will send $x$ to
$$
x+h_{ab}\circ \rho_S,
$$
while if we apply $b$ first, we get 
$$
x+h_b\circ \rho_S\in H(\Ad_b\circ \rho_S),
$$ 
which then goes via $a$ to
$$
x+h_b\circ \rho_S +h_a\circ \Ad_b\circ \rho_S.
$$
Thus, $$H(ab)=H(a)H(b).$$ Defining
$$
L(a)=\Sigma_*\circ H(a)
$$
turns $L$ into a functor from $\mathcal{M}_S^{loc}$ to $\nZ$-torsors. Even though we are not explicitly laying down geometric foundations, it is clear that $L$ defines thereby an $A^S$-equivariant $\nZ$-torsor on $Y_S^{loc}(A)$, or a $\nZ$-torsor on the stack $\msloc(A)$.
\ms

We can compose the functor $L$ with the restriction
$r_S: \mathcal{M}_S(A)\to \mathcal{M}_S^{loc}(A)$ to get an $A$-equivariant functor
$L^{glob}$ from $Y_S(A)$ to $\nZ$-torsors. 

\begin{lem}
Let $\rho \in Y_S(A)$ and $a\in \Aut(\rho).$ Then $L^{glob}(a)=0$.
\end{lem}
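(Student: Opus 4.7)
The plan is to identify $L^{glob}(a)$ as translation of the torsor $L(r_S(\rho))$ by a specific element $\tau_a \in \nZ$, and then show $\tau_a = 0$ via global reciprocity for $H^2$.

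First I would unwind the definitions. The condition $a \in \Aut(\rho)$ means $a \in A$ satisfies $\Ad_a \circ \rho = \rho$, so in particular $\Ad_a \circ (\rho \circ i_v) = \rho \circ i_v$ for every $v \in S$. Therefore $a$ acts as the diagonal element $(a,\ldots,a) \in A^S$ fixing every component of $r_S(\rho)$, and $L^{glob}(a) = L(a)$ is a self-isomorphism of the torsor $L(r_S(\rho))$ --- hence translation by some $\tau_a \in \nZ$. Tracing through the formula $x \mapsto x + (h_a \circ \rho \circ i_v)_{v \in S}$ defining $H(a)$ and pushing out by $\Sigma$ yields
$$
\tau_a \;=\; \sum_{v \in S}\inv_v\bigl([h_a \circ \rho \circ i_v]\bigr),
$$
where $[h_a \circ \rho \circ i_v] \in H^2(\pi_v, \zmod n)$ denotes the cohomology class.

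Next I would argue that these local classes are the restrictions of a single \emph{global} class. Using $dh_a = c\circ \Ad_a - c$ together with $\Ad_a \circ \rho = \rho$, one computes
$$
d(h_a \circ \rho) \;=\; (c \circ \Ad_a - c)\circ \rho \;=\; c \circ \rho - c \circ \rho \;=\; 0,
$$
so $h_a \circ \rho \in Z^2(\pi_S, \zmod n)$ represents a class $\xi \in H^2(\pi_S, \zmod n)$. This class is independent of the choice of representative of $h_a$ in $C^2(A,\zmod n)/B^2(A,\zmod n)$, since a coboundary in $A$-cochains pulls back to a coboundary on $\pi_S$. By naturality of restriction, the local cocycles $h_a \circ \rho \circ i_v$ represent $i_v^*\xi \in H^2(\pi_v, \zmod n)$.

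Finally I would invoke global reciprocity. Transport $\xi$ to $H^2(X_S, \mu_n)$ via the fixed trivialisation $\zeta_n$ (valid since $n$ is invertible on $X_S$, so $H^2(\pi_S, \zmod n) \cong H^2(X_S, \zmod n)$). The Hasse reciprocity law --- equivalently, the exactness of the Poitou--Tate sequence
$$
H^2(X_S, \mu_n) \;\longrightarrow\; \bigoplus_{v} H^2(F_v, \mu_n) \;\xrightarrow{\;\Sigma\inv_v\;} \nZ \;\longrightarrow\; 0
$$
--- asserts that localisations of global classes sum to zero under $\Sigma \inv_v$. Since every prime dividing $n$ lies in $S$, the class $\xi$ is unramified at $v \notin S$ and $\inv_v(i_v^*\xi) = 0$ there; since $F$ is totally imaginary, all archimedean places are complex and contribute nothing. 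Hence $\tau_a = \sum_{v \in S}\inv_v(i_v^*\xi) = 0$, so $L^{glob}(a) = 0$. The main obstacle is the bookkeeping in the first step: one must carefully track the cochain-level formulas (with the sign conventions of Appendix~\ref{sec:Appendix A} and the choices prescribed in Appendix~\ref{sec:appendix B}) through the quotients by $B^2_S$ and through $\Sigma$, so that $\tau_a$ really is the sum of local invariants of a genuinely global cohomology class; the reciprocity input itself is entirely standard.
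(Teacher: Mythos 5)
Your proof is correct and follows essentially the same route as the paper: the key observation is that $\Ad_a\circ\rho=\rho$ forces $d(h_a\circ\rho)=0$, so $h_a\circ\rho$ defines a genuine class in $H^2(\pi_S,\zmod n)$, and then the reciprocity law (the paper cites \cite[Theorem (8.1.17)]{NSW}, you cite the equivalent Poitou--Tate exactness) annihilates $\Sigma_*(h_a\circ\rho)$. The only difference is that you spell out the bookkeeping (identifying $L^{glob}(a)$ with translation by $\tau_a$, well-definedness modulo $B^2$, vanishing of the archimedean and non-$S$ contributions) that the paper leaves implicit.
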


\begin{proof}
By assumption, $\Ad_a\rho=\rho$, and hence, $dh_a\circ \rho=0$. That is,
$h_a\circ \rho\in H^2(\pi_S, \zmod n).$  Hence, by the reciprocity law for $H^2(\pi_S, \zmod n)$  (\cite[Theorem (8.1.17)]{NSW}), we get
$$
\Sigma_*(h_a\circ \rho )=0.
$$
\end{proof}
By the argument of  \cite[p. 439]{FQ}, we see that there is a $\nZ$-torsor
$$
L^{\inv}([\rho])
$$ 
of invariant sections for the functor $L^{glob}$ depending only on the orbit $[\rho]$. This is the set of families of elements 
$$
x_{\rho'}\in L^{glob}(\rho')
$$ 
as $\rho'$ runs over $ [\rho]$ with the property  that every morphism $a: \rho_1\to \rho_2$ takes $x_{\rho_1}$ to $x_{\rho_2}$.  Alternatively, $L^{\inv}([\rho])$ is the inverse limit of the $L^{glob}(\rho')$ with respect to the indexing category $[\rho]$. 

\ms

Since 
$$
H^3(\pi_S, \zmod n)=0
$$
(\cite[Proposition (8.3.18)]{NSW}), the cocycle
$c\circ \rho$ is a coboundary 
\begin{equation}\label{global trivialisation}
c\circ \rho=d\b
\end{equation}
for
$\b\in C^2(\pi_S, \zmod n)$. This element defines a class 
\begin{equation} \label{CSA with boundary}
CS_c([\rho]):=\Sigma([i^*_S(\b)])\in L^{\inv}([\rho]).
\end{equation}
A different  choice $\b'$ will be related by 
$$
\b'=\b+z
$$
for a 2-cocycle $z\in Z^2(\pi_S, \zmod n)$, which vanishes when mapped to $L((\rho\circ i_v)_{v\in S})$ because of the reciprocity sequence
\[
\xyh{3}
\xymatrix{
0 \ar[r] &  H^2(\pi_S, \zmod n) \ar[r] & H_S^2 \ar[r]^-{\sum_v \inv_v} & \nZ \ar[r] & 0.
}
\]
 Thus, the class $CS_c([\rho])$ is independent of the choice of $\b$ and defines a global section
$$
CS_c\in \G( \mathcal{M}_S(A), L^{glob}).
$$
Within the context of this paper, a `global section' should just be interpreted as an assignment of $CS_c([\rho])$ as above for each orbit $[\rho]$.

\section{The arithmetic Chern-Simons action: the $p$-adic case} \label{sec:p-adic case}

%

  Now fix a prime $p$ and assume all primes of $F$ dividing $p$ are contained in $S$.   Fix a compatible system $(\zeta_{p^n})_n$ of $p$-power roots of unity, giving us an isomorphism 
$$
\z: \Z_p\simeq \Z_p(1):=\invlim_n \mu_{p^n}.
$$
In this section, we will be somewhat more careful with this isomorphism. Also, it will be necessary to make some assumptions on the representations that are allowed. 
\ms

Let $A$ be a $p$-adic Lie group, e.g., $GL_n(\Z_p)$. Assume $A$ is equipped with an open  homomorphism
$t:A \to \G:=\Z_p^\xx$ and define $A^n$ to be the kernel of the composite map 
$$
A\to \Z_p^\xx\to (\Z/p^n\Z)^\xx=:\G_n.
$$
Let 
$$
\Ainfty=\cap_n A^n=\Ker(t).
$$ 

In this section, we denote by $Y_S(A)$ the continuous homomorphisms $$\rho: \pi_S\to A$$ such that $t\circ \rho$ is a  power $\chi^s$ of the $p$-adic cyclotomic character $\chi$ of $\pi_S$ by a $p$-adic unit $s$. (We note that $s$ itself is allowed to vary.) Of course this condition will be satisfied by any geometric Galois representations or natural $p$-adic families containing one.
\ms
 
    As before, $A$ acts on $Y_S(A)$ by conjugation. But in this section, we will restrict the action to
$\Ainfty$ and   use the notation $\mathcal{M}_S(A)$ for the corresponding action groupoid.

Similarly, we denote by $Y_S^{loc}$ the collections of continuous homomorphisms
$$\rho_S:=(\rho_v: \pi_v \to A)_{v\in S}$$
for which there exists a $p$-adic unit $s$ such that $t\circ \rho_v=(\chi|_{\pi_v})^s$ for all $v$. $\mathcal{M}^{loc}_S(A)$ then denotes the action groupoid defined by the product   $(\Ainfty)^S$ of the conjugation action on  the $\rho_S$.
\ms

We now fix a continuous cohomology class $$c\in  H^3(A, \Z_p[[\G]]),$$where 
$$\Z_p[[\G]]=\invlim_n\Z_p[\G_n].$$ We represent $c$  by a cocycle in $Z^3(A, \Z_p[[\G]])$, which we will also  denote by $c$.
Given $\rho\in Y_S(A)$, we can view $\Z_p[[\G]]$ as a continuous representation of $\pi_S$, where the action is left multiplication via $t\circ \rho$. We denote this representation by
$\Z_p[[\G]]_{\rho}$. The isomorphism
 $\z:\Z_p\simeq \Z_p(1)$, even though it's not $\pi_S$-equivariant, does induce a $\pi_S$-equivariant isomorphism
$$
\z_{\rho}: \Z_p[[\G]]_{\rho}\simeq \L:=\Z_p[[\G]]\otimes \Z_p(1).
$$
Here, $\Z_p[[\G]]$ written without the subscript refers to the action via the cyclotomic character of $\pi_S$ (with $s=1$ in the earlier notation). The isomorphism is defined as follows.
If $t\circ \rho=\chi^s$, then we have the isomorphism
$$\Z_p[[\G]]\simeq \Z_p[[\G]]_{\rho}$$
that sends $\g$ to $\g^s$. On the other hand, we also have
$$\Z_p[[\G]]\simeq \L$$
that sends $\g$ to $\g\otimes \g\z(1).$ Thus, $\z_{\rho}$ can be taken as the inverse of the first followed by the second.

  \ms

Combining these considerations,  we get an element
$$
\z_{\rho}\circ \rho^*c=\zeta_{\rho}\circ c\circ \rho \in Z^3(\pi_S, \L). 
$$
Similarly, if $\rho_S:=(\rho_v)_{v\in S}\in Y^{loc}_S$, we can regard $\Z_p[[\G]]_{\rho_v}$ as a representation of $\pi_v$ for each $v$, and  we get
$\pi_v$-equivariant isomorphisms
$$
\z_{\rho_v}:\Z_p[[\G]]_{\rho_v}\simeq \L.
$$
We also use the notation $$\z_{\rho_S}:\prod_{v\in S}\Z_p[[\G]]_{\rho_v}\simeq \prod_{v\in S}\L$$
for the isomorphism given by the product of the $\z_{\rho_v}$.

\ms

It will be convenient to again denote by  $C^i_S(\L)$ the product $\prod_{v\in S} C^i(\pi_v, \L)$ and use the similar notations $Z^i_S(\L)$, $B^i_S(\L)$ and $H^i_S(\L)$.
The element $\z_{\rho_S}\circ \rho_S^*c$ is  an element in $ Z^3_S(\L)$.
We then put
$$
H(\rho_S, \L):=d^{-1}((\z_{\rho_S}\circ \rho_S^*c) )/B^2_S(\L)\subset C^2_S(\L)/B^2_S(\L).
$$
This is a torsor for $$H^2_S(\L)\simeq \prod_{v\in S} H^2(\pi_v, \L).$$
The augmentation map $$a:\L\to \Z_p(1)$$ for each $v$ can be used to push this out to a torsor
$$a_*(H(\rho_S, \L))$$
for the group 
$$\prod_{v\in S} H^2(\pi_v, \Z_p(1))\simeq \prod_{v\in S}\Z_p,$$  which then can be pushed out with the sum map $$\Sigma :\prod_{v\in S}\Z_p\to \Z_p$$ to give us a $\Z_p$-torsor $$L(\rho_S, \Z_p):=\Sigma_*(a_*(H(\rho_S, \L))).$$

As before, we can turn this into a functor
$L(\cdot, \Z_p)$ on $\mathcal{M}^{loc}_S(A)$, taking into account the action of $(\Ainfty)^S$. By composing with the restriction functor
$$
r_S:\mathcal{M}_S(A)\to \mathcal{M}^{loc}_S(A),
$$
we also get a $\Z_p$-torsor $L^{glob}(\cdot, \Z_p)$
on $\mathcal{M}_S(A)$.
\ms

We now  choose an element $\b\in C^2(\pi_S, \L)$ such that
$$
d\b=\z_{\rho}\circ c\circ \rho\in Z^3(\pi_S, \L)=B^3(\pi_S, \L)
$$ 
to define the $p$-adic Chern-Simons action
$$
CS_c([\rho]):=\Sigma_*a_*i_S^*(\b)\in L^{glob}([\rho],\Z_p).
$$
The argument that this action is independent of $\b$ and equivariant is also the same as before, giving us an element 
$$
CS_c\in \G(  \mathcal{M}_S(A) ,L^{glob}(\cdot, \Z_p)).
$$

\section{Towards computation: the decomposition formula}\label{sec: towards computation}
In this section, we indicate how one might go about computing the arithmetic Chern-Simons invariant in the unramified case with finite coefficients. That is, we assume we are in the setting of Section \ref{sec:classical case}. 
We provide a proof of Theorem \ref{main theorem} in a slightly generalized setting.
\ms

Let $X=\Spec(\O_F)$ and $M$ a continuous representation of $\pi=\pi_1(X, \fb)$ regarded as a locally constant sheaf on $X$. Assume $M=\invlim M_i$ with $M_i$ finite representations such that there is a finite set $T$  of primes in $\O_F$ containing all primes dividing the order of any $\abs{M_i}$. Let $U=\Spec(\O_{F,~T})$, $\pi_T=\pi_1(U, \fb)$, and $\pi_v=\Gal({\ov F}_v/F_v)$ for a prime $v$ of $F$. Fix natural homomorphisms
\[
\k_T : \pi_T \to \pi \qa \k_v : \pi_v \to \pi.
\]
We denote by $\rho_T$ (resp. $\rho_v$) the composition of $\k_T$ (resp. $k_v$) with 
$$
\rho \in \Hom_{cont}(\pi, M).
$$
Finally, we write $\fP_v$ for the maximal ideal of $\O_F$ corresponding to the prime $v$ and $r_v$ for the restriction map of cochains or cohomology classes from $\pi_T$ to $\pi_v$.
\ms

Denote by
$C^*_c(\pi_T, M)$ the complex defined as a mapping fiber
$$
C^*_c(\pi_T, M):=\mathrm{Fiber}[C^*(\pi_T, M)\to \prod_{v\in T} C^*(\pi_v, M)].
$$
So
$$
C^n_c(\pi_T, M)=C^n(\pi_T,M)\times\prod_{v\in T} C^{n-1}(\pi_v, M),
$$
and
$$
d(a, (b_v)_{v \in T})=(da, (r_v(a)-db_v)_{v \in T})
$$
for $(a,(b_v)_{v \in T})\in C^n_c(\pi_T, M)$.
As in \cite[p. 20]{FK}, since there are no real places in $F$, there is a quasi-isomorphism
$$
C^*_c(\pi_T, M)\simeq R\G(X, j_!j^*(M)),
$$
where  $j:U\to X$ is the inclusion.
But  there is also an exact sequence
$$
\xyh{1}
\xymatrix{
0 \ar[r] &  j_!j^*(M) \ar[r] & M \ar[r] & i_*i^*(M) \ar[r] & 0,
}
$$
where $i:T\to X$ is the closed immersion complementary to $j$. Thus, we get an exact sequence
\begin{align*}
\xyh{1} 
\xymatrix{
\prod\limits_{v\in T}H^2(k_v, i^*(M)) \ar[r] & H^3(C_c^*(\pi_T, M)) \ar[r] & H^3(X, M) \ar[r] & \prod\limits_{v\in T} H^3(k_v, i^*(M)),
}
\end{align*}
where $k_v:=\Spec(\O_F/\fP_v)$, from which we get an isomorphism
$$
H^3_c(U, M):=H^3(C_c^*(\pi_T, M))\simeq H^3(X, M),
$$
since $k_v$ has cohomological dimension 1.

We interpret this as a statement that the cohomology of $X$
$$
H^3(X, M)
$$
can be identified with  cohomology of a `compactification' of $U$ with respect to the `boundary',  that is, the union of the 
$\Spec(F_v)$ for $v\in T$. 
This means that a class $z\in H^3(X, M)$ is represented by
$(a, (b_v)_{v\in T})$, where $a \in Z^3(\pi_T, M)$ and
$b_v\in C^2(\pi_v, M)$ in such a way that 
$$
db_v=r_v(a).
$$
There is also the exact sequence
$$
\xyv{1}
\xymatrix{
\ar[r] & H^2(\pi_T, M) \ar[r] & \prod\limits_{v\in T} H^2(\pi_v, M) \ar[r] & H^3_c(U, M) \ar[r] & 0,
}
$$
the last zero being $H^3(U, M):=H^3(\pi_T, M)=0$.
We can use this to compute the invariant of $z$ when $M=\mu_n$. (Note that $F$ contains $\mu_n$ and hence it is in fact isomorphic to the constant sheaf $\zmod n$.)
We have to lift $z$ to a collection of classes $x_v\in H^2(\pi_v, \mu_n)$ and then take the sum
$$
\inv(z)=\sum_v\inv_v(x_v).
$$
This is independent of the choice of the $x_v$ by the reciprocity law (cf. \cite[p. 541]{mazur}). The lifting process may be described as follows. The map 
$$
\prod_{v\in T} H^2(\pi_v, \mu_n)\rra H^3_c(U, \mu_n)
$$
just takes a tuple of 2-cocycles $(x_v)_{v\in T}$ to $(0, (x_v)_{v\in T})$. But by the vanishing of $H^3(U, \mu_n)$, given
$z=(a, (b_{-,v})_{v \in T})$, we can find a global cochain $b_+\in C^2(\pi_T, \mu_n)$ such that $db_+=a$. We then put
$$
x_v:= b_{-,v}-r_v(b_+).
$$
Note that $(0, (x_v)_{v\in T})$ is cohomologous to $z=(a, (b_{-,v})_{v \in T})$.
\ms

As before, we start with a class $c \in H^3(A, \mu_n) \simeq H^3(A, \zmod n)$.
Then, we get a class
\[
z= j^3 \circ \rho^*(c) \in H^3(X, \mu_n),
\]
where $j^i : H^i (\pi, \mu_n) \to H^i (X, \mu_n)$ is the natural map from group cohomology to \'etale cohomology (cf. \cite[Theorem 5.3 of Chap. I]{Mi}).
Let $w$ be a cocycle representing $\rho^*(c) \in H^3(\pi, \mu_n)$.
Let $I_v \subset \pi_v$ be the inertia subgroup. We now can trivialise $\k_v^*(w)$ by first doing it over $\pi_v/I_v$ to which it factors. That is,
the $b_{-,v}$ as above can be chosen as cochains factoring through $\pi_v/I_v$.
This is possible because $H^3(\pi_v/I_v, \mu_n)=0$. The class $(\k_T^*(w), (b_{-,v})_{v \in T})$ chosen in this way is independent of the choice of the $b_{-,v}$. This is because $H^2(\pi_v/I_v, \mu_n)$ is also zero.
The point is that the representation of $z$ as $(\k_T^*(w), (b_{-,v})_{v\in T})$ with unramified $b_{-,v}$ is essentially canonical. 
More precisely, given $\k_v^*(w)|_{(\pi_v/I_v)}\in Z^3(\pi_v/I_v, \mu_n)$, there is a canonical 
$$
b_{-,v}\in C^2(\pi_v/I_v, \mu_n)/B^2(\pi_v/I_v, \mu_n)
$$
such that $db_{-,v}=\k_v^*(w) |_{(\pi_v/I_v)}$. This can then be lifted to a canonical class
in 
$$
C^2(\pi_v, \mu_n)/B^2(\pi_v, \mu_n).
$$
 Now we trivialise $\k_T^*(w)$ globally as above,
that is, by the choice of $b_+\in C^2(\pi_T, \mu_n)$ such that
$db_+=\k_T^*(w)$. Then
$(b_{-,v}-b_{+,v})_{v\in T}$ will be cocycles, where $b_{+, v}:=r_v(b_+)$, 
and we compute
\[ 
\inv(z)=\sum_{v \in T} \inv_v (b_{-,v}-b_{+, v}).
\]
Thus, for a given homomorphism $\rho : \pi \to A$, it suffices to find various trivialisations of $\rho^*(c)$ after restriction to $\pi_T$ and to $\pi_v$ for $v \in T$.
\begin{itemize}
\item
We are free to choose a finite set $T$ of primes in a convenient way as long as $T$ contains all primes dividing $n$. And then, for any $v\in T$, solve
\[
db_{-,v}=\rho_v^*(c) \in Z^3 (\pi_v, \mu_n).
\] 
In fact, $b_{-,v}$ comes from an element in $C^2(\pi_v/{I_v}, \mu_n)$ by inflation, so $b_{-,v}$ is unramified.
\item
For chosen $T$, solve
\[
db_+=\rho_T^*(c) \in Z^3(\pi_T, \mu_n),
\]
and we set $b_{+, v} = r_v (b_+) \in C^2(\pi_v, \mu_n)$. 
\end{itemize}
Then, we have the decomposition formula
\[\label{eqn:decomposition formula}
CS_{c} ([\rho])=\sum\limits_{v \in T} \inv_v ([b_{-,v}-b_{+,v}]) \tag{$\dagger$}.
\]
In the case $M=\mu_n$ and $S=T$, a finite set of primes in $\cO_F$ containing all primes dividing $n$, a simple inspection implies that 
\[
\sum\limits_{v \in T} \inv_v ([b_{-,v}-b_{+,v}])= \sum\limits_{v \in S}(\beta_v)- CS_c([\rho \circ \k_S]).
\]
Thus, the formula ($\dagger$) provides a proof of Theorem \ref{main theorem}.
In general, $b_{-, v}$ and $b_{+, v}$ are not cocycles but their difference is.
This corresponds to the fact that $\sum\limits_{v \in S}(\beta_v)$  and $CS_c([\rho \circ \k_S])$ are not an element of $\nZ$ but their difference is.
\ms

A few remarks about this method:
\ms

1. Underlying this is the fact that the compact support cohomology $H^3_c(U, \mu_n)$ can be computed relative to the somewhat fictitious boundary of $U$ or as relative cohomology $H^3(X, T; \mu_n).$ Choosing the unramified local trivialisations corresponds to this latter representation.
\ms

2. To summarise the main idea again, starting from a cocycle $ z \in Z^3(\pi, \mu_n)$ we have canonical unramified trivialisations at each $v$ and a non-canonical  global ramified trivialisation. \bq {\em The invariant of $z$ measures the discrepancy between the unramified local trivialisations and a ramified global trivialisation. }\eq
The fact that the non-canonicality of the global trivialisation is unimportant follows from the reciprocity law (cf. \cite[p. 541]{mazur}). 
\ms

3. The description above that computes the invariant by comparing the local unramified trivialisation with the global ramified one is a precise analogue of the so-called `gluing formula' for Chern-Simons invariants when applied to $\rho^*(c)$ for a representation $\rho: \pi\to \zmod n$ and a 3-cocycle $c$ on $\zmod n$. 
\ms

\section{Examples}\label{sec: examples}
In this section, we provide several explicit examples of computation of $CS_{c}([\rho])$. We still assume that we are in the setting of Section \ref{sec:classical case}.

\subsection{General strategy}\label{sec:general strategy}
To compute the arithmetic Chern-Simons invariants, we essentially use the decomposition formula (\ref{eqn:decomposition formula}) in Section \ref{sec: towards computation}.
The most difficult part in the above method is finding an element $b_+ \in C^2(\pi_T, \mu_n)$ that gives a global trivialisation. 
\ms

To simplify our problem, we assume that
a cocycle $c \in Z^3(A, \mu_n)$ is defined by the cup product:
\[
c = \a \cup \e,
\] 
where $\a \in Z^1(A, \mu_n)=\Hom(A, \mu_n)$ and $\e \in Z^2(A, \zmod n)$ is a cocycle representing
an extension 
\[
\xymatrix{
E : 0 \ar[r] & \zmod n \ar[r] & \Gamma \ar[r]_-\p & A \ar[r] & 1.
}
\]
We note that if we take a section $\sigma$ of $\p$ that sends $e_A$ to $e_{\Gamma}$, then
\[
\e (x, y)=\sigma(x)\cdot\sigma(y)\cdot\sigma(xy)^{-1} \in \Ker \p = \zmod n
\]
(cf. \cite[p. 183]{weibel}).
As discussed in Section \ref{sec:classical case}, this assumption is vacuous if $A=\zmod n$.
\ms

To find $b_{-,v}$ and $b_{+,v}$ in the decomposition formula (\ref{eqn:decomposition formula}), we first trivialise $\e$ in $\pi_v$ and $\pi_T$, respectively. 
Namely, let 
\[
d \g_{-, v}=\rho_v^*(\e) \qa d\g_+ =  \rho_T^* (\e).
\]
Here, the precise choice of $\g_{-,v}$ will be unimportant, except it should be unramified and normalised so that $\g_{-,v}(e_A)=0$. Hence, we will be inexplicit below about this choice. 
Again, let $\g_{+,v}=r_v(\g_+)$. 
Then, we have
\[
d(\rho_v^*(\a) \cup \g_{-,v}) = -\rho_v^*(\a) \cup d\g_{-,v} = - \rho_v^* (\a \cup \e)=-\rho_v^*(c)
\]
and
\[
d(\rho_T^*(\a) \cup \g_{+}) = -\rho_T^*(\a) \cup d\g_{+} = - \rho_T^* (\a \cup \e)=-\rho_T^*(c).
\]
Therefore, we can find
\[
b_{-, v}=-\rho_v^*(\a) \cup \g_{-, v} \qa b_{+,v}=r_v(b_+)=r_v(-\rho_T^*(\a) \cup \g_{+})= -\rho_v^*(\a) \cup \g_{+, v}.
\]
\ms

In summary, we get the following formula.
\begin{thm}\label{thm:decomposition formula}
For $\rho$ and $c$ as above, we have
\begin{equation}
CS_c([\rho]):=CS_{[c]}([\rho])=\sum\limits_{v \in T} \inv_v (\rho_v^* (\a) \cup \psi_v),
\end{equation}
where $\psi_v = \g_{+, v} - \g_{-, v} \in Z^1(\pi_v, \zmod n)=H^1(\pi_v, \zmod n)=\Hom (\pi_v, \zmod n)$. 
\end{thm}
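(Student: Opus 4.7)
The plan is to derive this formula as a direct specialization of the decomposition formula $(\dagger)$ of Theorem \ref{main theorem} to the factored cocycle $c=\alpha\cup\e$, using the primitives for $\rho_v^*(\e)$ and $\rho_T^*(\e)$ prescribed in the paragraph preceding the statement.

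First, I would invoke $(\dagger)$: given any unramified $b_{-,v}\in C^2(\pi_v,\mu_n)$ with $db_{-,v}=\rho_v^*(c)$ and any global $b_+\in C^2(\pi_T,\mu_n)$ with $db_+=\rho_T^*(c)$, setting $b_{+,v}:=r_v(b_+)$, we have
\[
CS_c([\rho])=\sum_{v\in T}\inv_v\bigl([b_{-,v}-b_{+,v}]\bigr).
\]
So it suffices to produce such $b_{-,v}$ and $b_+$ for which the difference $b_{-,v}-b_{+,v}$ reduces to $\rho_v^*(\alpha)\cup\psi_v$.

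Second, I would set
\[
b_{-,v}:=-\,\rho_v^*(\alpha)\cup\gamma_{-,v},\qquad b_+:=-\,\rho_T^*(\alpha)\cup\gamma_+,
\]
and verify the required cocycle identities using the graded Leibniz rule $d(x\cup y)=dx\cup y+(-1)^{|x|}x\cup dy$ from Appendix \ref{sec:Appendix A}. Since $\alpha\in Z^1(A,\mu_n)$, its pullbacks $\rho_v^*(\alpha)$ and $\rho_T^*(\alpha)$ are $1$-cocycles, and the Leibniz rule collapses to $db_{-,v}=\rho_v^*(\alpha)\cup\rho_v^*(\e)=\rho_v^*(c)$, and likewise $db_+=\rho_T^*(c)$. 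The unramifiedness of $b_{-,v}$ is inherited from that of $\gamma_{-,v}$, since the cup product of cochains pulled back from $\pi_v/I_v$ is again pulled back from $\pi_v/I_v$; that $\gamma_{-,v}$ can be chosen unramified to begin with is guaranteed by the vanishing $H^i(\pi_v/I_v,\zmod n)=0$ for $i\geq 2$ invoked in Section \ref{sec: towards computation}.

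Third, a direct subtraction yields
\[
b_{-,v}-b_{+,v}=-\,\rho_v^*(\alpha)\cup(\gamma_{-,v}-\gamma_{+,v})=\rho_v^*(\alpha)\cup\psi_v,
\]
with $\psi_v:=\gamma_{+,v}-\gamma_{-,v}$. The relation $d\psi_v=\rho_v^*(\e)-\rho_v^*(\e)=0$ shows $\psi_v\in Z^1(\pi_v,\zmod n)=\Hom(\pi_v,\zmod n)$, so the cup product above is a genuine $2$-cocycle whose class lies in $H^2(\pi_v,\mu_n)$, justifying the identifications in the statement. Substituting into the formula from Step 1 yields the theorem.

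The only real obstacle is bookkeeping the sign conventions of Appendix \ref{sec:Appendix A} for the coboundary and cup product --- a sign slip would flip the stated formula. Beyond that, and the mild check that unramifiedness is preserved under the cup product constructions above, the argument is a mechanical consequence of $(\dagger)$.
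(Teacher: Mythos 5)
Your proposal is correct and follows essentially the same route as the paper's own argument in Section \ref{sec:general strategy}: invoke the decomposition formula $(\dagger)$, take $b_{-,v}=-\rho_v^*(\a)\cup\g_{-,v}$ and $b_+=-\rho_T^*(\a)\cup\g_+$, verify the required coboundary identities via the graded Leibniz rule (using that $\a$ is a cocycle), and observe that $b_{-,v}-b_{+,v}=\rho_v^*(\a)\cup\psi_v$. The one extra detail you supply --- that $b_{-,v}$ is unramified because both $\rho_v^*(\a)$ and $\g_{-,v}$ factor through $\pi_v/I_v$, with the existence of an unramified $\g_{-,v}$ secured by $H^2(\pi_v/I_v,\zmod n)=0$ --- is a correct and welcome clarification of a point the paper leaves implicit, but it does not alter the structure of the argument.
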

So, to evaluate the arithmetic Chern-Simons action, we need to study 
\begin{itemize}
\item
a trivialisation of certain pullback of a 2-cocycle $\e$, and
\item
the local invariant of a cup product of two characters on $\pi_v$.
\end{itemize}
\ms

In the following two subsections, we will see how this idea can be realised.

\subsection{Trivialisation of a pullback of $\e$} \label{sec:trivialisation}
As before, let $\e \in Z^2 (A, \zmod n)$ denote a 2-cocycle representing an extension
\[
\xymatrix{
E : ~0 \ar[r] & \zmod n \ar[r] & \Gamma \ar[r]_-{\p} & A \ar@/_1pc/@{-->}[l]_-{\sigma} \ar[r] & 1
}
\]
with a section $\sigma$ such that $\sigma(e_A)=e_\Gamma$. 
\ms

Suppose that we have the following commutative diagram of group homomorphisms:
\[ \label{eqn:key diagram}
\vcenter{\xymatrix{
& \Ker (f) ~~\ar@{^(->}[r] \ar@{-->}[dl]_-{\wt{f}|_{\Ker (f)}}& \wt{A} \ar[d]^-{f} \ar[dl]_-{\wt{f}} \\
\zmod n ~~\ar@{^(->}[r] &\Gamma \ar[r]_-{\p} & A. 
}}\tag{$\star$}
\]
Then, we can easily trivialise $f^* (\e) \in Z^2(\wt{A}, \zmod n)$.

\begin{lem}\label{lem:trivialisation}
For any $g \in \wt{A}$, let
\[
\g(g) := \sigma(f(g)) \cdot \wt{f}(g)^{-1}.
\]
Then, $\gamma(g) \in \Ker (\p)=\zmod n$ and $d\g = f^* (\e) \in Z^2(\wt{A},  \zmod n)$. Furthermore, we have $\g(e_{\wt{A}})=0$ and $\g(g\cdot h)=\g(g) + \g(h)$ for any $g, h \in \Ker (f)$.
\end{lem}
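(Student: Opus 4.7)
The plan is to verify each of the four assertions by unpacking the definition of $\gamma$ and using the commutativity of the diagram (\ref{eqn:key diagram}) together with the explicit cocycle formula $\e(x,y) = \sigma(x)\sigma(y)\sigma(xy)^{-1}$. Because the coefficient group $\zmod n$ carries the trivial $A$-action, the extension $E$ is central, so every value of $\gamma$ will be central in $\Gamma$; this will let me manipulate the defining equation freely inside $\Gamma$.

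First I would check that $\gamma$ actually takes values in $\Ker(\p)=\zmod n$. Applying $\p$ to $\g(g)=\sigma(f(g))\cdot\wt f(g)^{-1}$ and using $\p\circ\sigma=\id_A$ together with the commutativity $\p\circ\wt f=f$ gives $\p(\g(g))=f(g)\cdot f(g)^{-1}=e_A$, as required.

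Next, for the cocycle identity $d\g=f^*(\e)$, I rewrite the defining relation as $\sigma(f(g))=\g(g)\cdot\wt f(g)$, expand $\wt f(gh)=\wt f(g)\wt f(h)$, and use centrality of $\g(g),\g(h)\in\zmod n$ to compute
\[
\g(g)\g(h)\g(gh)^{-1}=\sigma(f(g))\sigma(f(h))\sigma(f(gh))^{-1}=\e(f(g),f(h))=f^*(\e)(g,h).
\]
Switching to additive notation in $\zmod n$ (and trivial action) this is precisely $d\g(g,h)=\g(g)-\g(gh)+\g(h)=f^*(\e)(g,h)$; the only real care here is lining this up with the sign/ordering convention of Appendix \ref{sec:Appendix A}, which is the one nontrivial bookkeeping step.

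The normalisation $\g(e_{\wt A})=0$ is immediate from $\sigma(e_A)=e_\Gamma$ and $\wt f(e_{\wt A})=e_\Gamma$. Finally, for $g,h\in\Ker(f)$ one has $f(g)=f(h)=f(gh)=e_A$, hence $\sigma(f(g))=\sigma(f(h))=\sigma(f(gh))=e_\Gamma$, so on $\Ker(f)$ the formula reduces to $\g(g)=\wt f(g)^{-1}$, taking values in the abelian subgroup $\zmod n\subset\Gamma$; additivity $\g(gh)=\g(g)+\g(h)$ is then a direct consequence of the homomorphism property of $\wt f$ combined with the commutativity of $\zmod n$. The main (minor) obstacle throughout is translating faithfully between the multiplicative conventions in $\Gamma$ and the additive conventions for cochains in $\zmod n$.
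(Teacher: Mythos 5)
Your proposal is correct and follows essentially the same route as the paper's proof: verify $\gamma$ lands in $\Ker(\p)$ via $\p\circ\sigma=\id_A$ and $\p\circ\wt f=f$; compute $d\gamma$ by expanding the definition and using centrality of $\Ker(\p)=\zmod n$ in $\Gamma$ to reduce to $\sigma(f(x))\sigma(f(y))\sigma(f(xy))^{-1}=f^*(\e)(x,y)$; and observe that on $\Ker(f)$ one has $\gamma=\wt f^{-1}$, a homomorphism into the abelian subgroup $\zmod n$. The only cosmetic difference is that you run the central identity $\g(g)\g(h)\g(gh)^{-1}=\sigma(f(g))\sigma(f(h))\sigma(f(gh))^{-1}$ from the left side to the right, whereas the paper unwinds $d\g(x,y)$ from the other end; the computation is the same.
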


\begin{proof}
First, we note that $\g(g) \in \Ker (\p)$ because $\p \circ \sigma$ is the identity and $\p \circ \wt{f}=f$.
By definition and the fact that $\Ker (\p)$ is in the center of $\Gamma$,
\begin{align*}
d\g(x, y)&=\g(y) \cdot \g(xy)^{-1} \cdot \g(x) =\g(y) \cdot \g(x) \cdot \g(xy)^{-1} \\
 &=\{\sigma(f(y)) \cdot \wt{f}(y)^{-1}\} \cdot \{ \sigma (f(x))\cdot\wt{f}(x)^{-1}  \} \cdot \{\sigma(f(xy))\cdot\wt{f}(xy)^{-1} \}^{-1} \\
 &= \{\sigma(f(y)) \cdot \wt{f}(y)^{-1}\} \cdot \sigma (f(x))\cdot\wt{f}(x)^{-1} \cdot \wt{f}(x) \cdot \wt{f}(y) \cdot \sigma(f(xy))^{-1} \\
 &= \sigma (f(x)) \cdot \{\sigma(f(y)) \cdot \wt{f}(y)^{-1}\} \cdot \wt{f}(y) \cdot \sigma(f(xy))^{-1} \\
 &=\sigma(f(x)) \cdot \sigma(f(y)) \cdot \sigma(f(x \cdot y))^{-1}\\
 &=f^*(\e) (x, y).
\end{align*}
Therefore the first claim follows. Also, $\g(e_{\wt{A}})=0$ because $\sigma(f(e_{\wt{A}}))=\sigma(e_{A})=e_\Gamma$ and $\wt{f}(e_{\wt{A}})=e_\Gamma$. Finally, for any $g\in \Ker (f)$, $\g(g)=-\wt{f}(g)$, so it is a homomorphism because $\wt{f}$ is a homomorphism and 
the image of $\wt{f}|_{\Ker (f)}$, which is contained in $\zmod n$, is abelian. 
\end{proof}
\ms

\begin{rem}\label{rem:key diagram choice}
In Diagram (\ref{eqn:key diagram}), we can take $\wt{A}=\Gamma$, $f=\p$ and $\wt{f}$ is the identity. For the rest of this section, we always fix such a choice.
\end{rem}

\subsection{Local invariant computation} \label{sec:local invariant computation}
In this subsection, we investigate several conditions to ensure
\[
\inv_v (\phi \cup \psi) \neq 0 \in \nZ,
\]
where $\phi \in H^1(\pi_v, \mu_n)=\Hom(\pi_v, \mu_n)$ and $\psi \in Z^1(\pi_v, \zmod n)=\Hom (\pi_v, \zmod n)$.

\begin{lem} \label{lem:trivial}
Suppose that $\phi$ is unramified, i.e., $\phi$ factors through $\pi_v/{I_v}$. Then, 
\[
\inv_v (\phi \cup \psi ) = 0
\]
if one of the following holds.
\begin{enumerate}
\item
$\phi = 1$, the trivial character.
\item
$\psi$ is unramified.
\end{enumerate}
\end{lem}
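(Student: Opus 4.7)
The plan is to handle the two cases separately, reducing each to a standard vanishing statement.

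For case (1), if $\phi = 1$ is the trivial character in $\Hom(\pi_v, \mu_n) = Z^1(\pi_v,\mu_n)$, then it is literally the zero cochain. Hence the cup product cocycle $\phi \cup \psi$ is identically zero at the level of cochains, a fortiori zero in $H^2(\pi_v, \mu_n)$, and thus $\inv_v(\phi \cup \psi) = 0$.

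For case (2), the idea is to show that $\phi \cup \psi$ lies in the image of inflation from the unramified quotient $\pi_v/I_v$, which has cohomological dimension one. Concretely, since $\phi$ and $\psi$ both factor through $\pi_v/I_v$, one can write $\phi = \Inf(\bar\phi)$ and $\psi = \Inf(\bar\psi)$ for cocycles $\bar\phi, \bar\psi$ on $\pi_v/I_v$ with values in $\mu_n$ and $\Z/n\Z$ respectively. Compatibility of cup product with inflation then gives
\[
\phi \cup \psi = \Inf(\bar\phi \cup \bar\psi) \in H^2(\pi_v,\mu_n).
\]
Since $\pi_v/I_v \cong \widehat{\Z}$ (topologically generated by Frobenius) has cohomological dimension one, $H^2(\pi_v/I_v, \mu_n) = 0$, so $\bar\phi \cup \bar\psi = 0$ and therefore $\phi \cup \psi = 0$ in $H^2(\pi_v,\mu_n)$. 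Applying $\inv_v$ yields $0 \in \tfrac{1}{n}\Z/\Z$.

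The argument is essentially bookkeeping with the fact that the unramified subgroup $H^1_{\ur}(\pi_v, -)$ pairs trivially with itself under cup product. No real obstacle is expected; the only subtlety worth noting is a sign/convention check that inflation commutes with cup product for our chosen cochain model, which is standard (and consistent with the conventions laid out in Appendix \ref{sec:Appendix A}).
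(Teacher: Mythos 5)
Your proof is correct and follows essentially the same route as the paper: case (1) is the same trivial observation that the cup product with the zero cochain vanishes, and case (2) is the same inflation argument, i.e., both $\phi$ and $\psi$ factor through $\pi_v/I_v\cong\widehat{\Z}$, so $\phi\cup\psi$ is inflated from $H^2(\pi_v/I_v,\mu_n)=0$. You simply spell out the cohomological-dimension-one fact that the paper leaves implicit.
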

\begin{proof}
If $\phi = 1$, then $\phi \cup \psi = 0 \in H^2(\pi_v, \mu_n)$. Thus, $\inv_v (\phi \cup \psi ) =0$.
Also, if $\psi$ is unramified, then $\phi \cup \psi$ arises from $H^2(\pi_v/{I_v}, \mu_n)$ by inflation, which is 0.
Therefore, $\phi \cup \psi = 0 \in H^2(\pi_v, \mu_n)$ and the result follows.
\end{proof}

If $v$ does not divide $n$, then we can prove more.
\begin{lem} \label{lem:non-trivial}
Assume that $v$ does not divide $n$. And assume that $\phi$ is an unramified generator of $\Hom (\pi_v, \mu_n)$, i.e., a generator of $\Hom(\pi_v/I_v, \mu_n)$. Then,
\[
\inv_v (\phi \cup \psi) \neq 0 \Longleftrightarrow \psi \text{ is ramified}.
\]
\end{lem}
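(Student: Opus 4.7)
The plan is to reduce the statement to local Tate duality. The backward direction ($\psi$ unramified $\Rightarrow \inv_v(\phi \cup \psi) = 0$) is already contained in Lemma \ref{lem:trivial}(2), so the task is to establish the converse: if $\psi$ is ramified, then $\inv_v(\phi \cup \psi) \neq 0$. The overall structure is to exhibit the annihilator of $\phi$ under the local duality pairing and show it coincides exactly with the subgroup of unramified classes.

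Since $\mu_n \subset F \subset F_v$ (so $\pi_v$ acts trivially on both $\mu_n$ and $\Z/n\Z$) and $v \nmid n$, local Tate duality provides a perfect pairing of finite abelian groups
\[
\inv_v \circ \cup \,:\, H^1(\pi_v, \mu_n) \times H^1(\pi_v, \Z/n\Z) \longrightarrow \nZ,
\]
as in \cite[Theorem (7.2.6)]{NSW}. I would next invoke the standard orthogonality principle: the unramified subgroups $H^1(\pi_v/I_v, \mu_n)$ and $H^1(\pi_v/I_v, \Z/n\Z)$ are mutual annihilators under this pairing. One containment is immediate, because a cup product of two inflated classes is itself inflated from $H^2(\pi_v/I_v, \mu_n)$, and this group vanishes since $\pi_v/I_v \cong \widehat{\Z}$ has cohomological dimension one. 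The reverse inclusion comes from counting: both unramified $H^1$'s are isomorphic to $\Hom(\widehat{\Z}, \Z/n\Z) \cong \Z/n\Z$, so have order $n$, while the local Euler characteristic formula (together with $v \nmid n$ and $\mu_n \subset F_v$) gives $|H^1(\pi_v, \mu_n)| = n^2$; perfectness of the pairing then forces the two order-$n$ subgroups to be full annihilators of each other.

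To conclude, note that by hypothesis $\phi$ generates the cyclic group $H^1(\pi_v/I_v, \mu_n) \cong \Z/n\Z$. Hence the annihilator of the singleton $\{\phi\}$ in $H^1(\pi_v, \Z/n\Z)$ coincides with the annihilator of the entire cyclic subgroup $\langle \phi \rangle = H^1(\pi_v/I_v, \mu_n)$, which by the orthogonality above is exactly $H^1(\pi_v/I_v, \Z/n\Z)$. Therefore $\inv_v(\phi \cup \psi)$ vanishes if and only if $\psi$ is unramified, which is the desired biconditional.

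The only real obstacle is cleanly assembling the two standard inputs, perfectness of the local duality pairing and the self-orthogonality of the unramified subgroup under it; both are well-documented in \cite{NSW}, and no further explicit cohomological calculation is required.
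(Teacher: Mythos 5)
Your proof is correct, but it takes a genuinely different route from the paper. The paper works explicitly: regarding $\psi$ as a Kummer class, it writes $\psi=\delta(a)$ for some $a\in F_v^\times$ via the isomorphism $F_v^\times/(F_v^\times)^n\simeq H^1(\pi_v,\mu_n)$, observes that $\psi$ is ramified precisely when $\ord_v(a)\not\equiv 0\pmod n$ (this is where $v\nmid n$ enters for the paper), and then evaluates $\inv_v(\phi\cup\delta(a))=\phi(\Frob^{\ord_v(a)})=t\cdot\ord_v(a)/n$ with $t\in(\Z/n\Z)^\times$ coming from the generator hypothesis on $\phi$. You instead invoke the perfectness of the local Tate duality pairing, the self-orthogonality of the unramified subspaces (one inclusion from $H^2(\pi_v/I_v,\mu_n)=0$, the reverse from the Euler characteristic count $|H^1(\pi_v,\mu_n)|=n^2$ together with perfectness), and then observe that $\phi^\perp=\langle\phi\rangle^\perp=H^1(\pi_v/I_v,\Z/n\Z)$ since $\phi$ generates $H^1(\pi_v/I_v,\mu_n)$. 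Both are sound; the paper's argument is more elementary and self-contained in that it exhibits the exact value of the invariant, which is useful for the later computations, whereas your structural argument is conceptually cleaner, avoids explicit cocycle bookkeeping, and localizes the role of the hypotheses ($v\nmid n$ for the Euler characteristic, generator hypothesis to upgrade $\phi^\perp$ to $\langle\phi\rangle^\perp$) very transparently.
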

\begin{proof}
Using a fixed primitive $n$-th root $\z$ of unity, we fix an isomorphism
\[
\xyv{0.2}
\xymatrix{
\eta : \zmod n \ar[r] & \mu_n \\
\hs{7} a \hs{3} \ar@{|->}[r] & \z^a 
}
\]
and using ${\eta}$, we get natural isomorphisms
\[
\xymatrix{
\Hom(\pi_v, \nZ) & \ar[l]_-{\frac{1}{n}\cdot (-)}\Hom (\pi_v, \zmod n ) \ar@/^1pc/[r]^-{{\eta} \circ (-)} & \ar@/^1pc/[l]^-{{\eta}^{-1} \circ (-)}\Hom (\pi_v, \mu_n).
}
\]
In this proof, we will regard $\phi$ as an element of $\Hom (\pi_v, \nZ)$ and $\psi$ as one of $\Hom (\pi_v, \mu_n)$
using the above isomorphisms. 

\vspace{2mm}
If $\psi$ is unramified, $\inv_v (\phi \cup \psi )=0$ by the above lemma. Since $\mu_n \subset F_v$,
by the Kummer theory we can find an element $a \in F_v^*$ such that $\delta (a) = \psi$,
where $\delta : F_v^*/{(F_v^*)^n} \simeq H^1(\pi_v,~\mu_n) = \Hom (\pi_v, \mu_n)$. Let 
\[
\ord_v : F_v^* \rra \Z
\]
be the normalized valuation on $F_v^*$ that sends a uniformiser 
$\vp$ of $\cO_{F_v}$ to $1$. Then, 
\[
\psi  \text{ is ramified } \Longleftrightarrow ~~\ord_v(a) \not\equiv 0 \pmod n.\footnote{This is where our assumption that $v\nmid n$ is used.}
\]
Since $\phi$ is an unramified generator, $\phi(\Frob)=\frac{t}{n}$ for some $t \in (\zmod n)^\times$, where $\Frob$ is a lift of the Frobenius in $\pi_v/I_v$ to $\pi_v$. Then,
\[
\inv_v(\phi \cup \psi)=\inv_v( \phi \cup \d (a)) =\phi(\Frob^{\ord_v(a)})=\frac{t \cdot \ord_v(a)}{n}.
\]
Combining the above two results, we obtain 
\[
\psi  \text{ is ramified } \Longleftrightarrow ~~ \inv_v(\phi \cup \psi) \neq 0
\]
as desired.
\end{proof}

\begin{rem}
When $n=2$, the above lemmas are enough for the computation of local invariants. 
\end{rem}

\subsection{Construction of examples}
From now on, we assume that $n=2$. 
\ms

As a corollary of Section \ref{sec:trivialisation}, if we have the following commutative diagrams
\[ \label{eqn:key diagram2}
\vcenter{
\xyv{0.5}
\xymatrix{
\pi_T \ar@{->>}[r]^-{\wt{\rho_+}}\ar@{->>}[dd]_-{\k_T} \ar[rdd]^-{\rho_+}  & \Gamma \ar@{->>}[dd]^-{\p} & & \pi_v \ar[dd]_-{\k_v} \ar@{->>}[r]^-{\wt{\rho_v}} \ar[rdd]^-{\rho_v} & \Gamma \ar@{->>}[dd]^-\p\\
&&\text{and}&& \\
\pi \ar@{->>}[r]_-\rho & A & & \pi \ar@{->>}[r]_-\rho & A, 
}}\tag{$\star\star$}
\]
then we get 
\[
\g_+=(\wt{\rho_+})^*(\g) \qa \g_{-, v}=(\wt{\rho_v})^*(\g).
\]
\ms

Thus we can explicitly compute $CS_c([\rho])$ using the previous strategy when we are in the following situation:

\begin{assu} \label{assumptions}$~$
\begin{enumerate}
\item
$F$ is a totally imaginary field.
\item
$c=\a \cup \e$ with $\a : A \to \mu_2$ surjective, and $\e$ representing an extension
\[
\xymatrix{
E : ~0 \ar[r] & \zmod 2 \ar[r] & \Gamma \ar[r] & A  \ar[r] & 1.
}
\]
\item
There are Galois extensions of $F$:
\[
F \subset F^{\a} \subset F^\ur \subset F^+
\]
such that 
\begin{itemize}
\item
$\Gal(F^\ur/F)$ is isomorphic to $A$ and $F^\ur/F$ is unramified everywhere.
\item
$\Gal(F^+/F)$ is isomorphic to $\Gamma$ and $F^+/F$ is unramified at the primes above $2$.
\item
$F^{\a}$ is the fixed field of the kernel of the composition 
\[
\Gal(F^\ur/F) \overset{\sim}{\rra} A \overset{\a}{\rra} \mu_2
\]
and hence we get a commutative diagram
\[
\xymatrix{
 & A \ar@/^1pc/[rrd]^-{\a}  \ar@/^0.5pc/[rd] & & \\
\pi \ar@{->>}[ur]^-{\rho} \ar@{->>}[r] & \Gal(F^\ur/F) \ar@{->>}[r] \ar[u]_-\simeq & \Gal(F^{\a}/F) \ar[r]^-{\simeq} & \mu_2. 
}
\]
\end{itemize}
\end{enumerate}
\end{assu}
\ms

Suppose we are in the above assumption.
Let $S$ be the set of primes of $\cO_F$ ramified in $F^+$, and $S_2$ the set of primes of $\cO_F$ dividing $2$.
Then by our assumption, $S\cap S_2 = \emptyset$.
Let $T=S \cup S_2$. Then, we can find a global trivialisation $\g_+$ of $\rho_T^*(\e)$ from the following commutative diagram
\[
\xymatrix{
& \zmod 2 \simeq \Ker ({\phi})=\Gal(F^+/{F^\ur}) ~~\ar[r] \ar@{-->}[dl]_-{\wt{{\phi}}|_{\Ker ({\phi})}=\text{Id}}& \Gal(F^+/F) \ar@{->>}[d]^-{\phi} \ar[dl]_-{\wt {\phi} = \text{Id}} \\
\zmod 2 \ar[r] &\Gamma \simeq \Gal(F^+/F) \ar@{->>}[r] & A\simeq \Gal(F^\ur/F). 
}
\]
For each $v \in T$, let $D(v)$ be the decomposition group of $\Gal(F^+/F)$ at $v$. In other words,
\[
D(v)=\{ g \in \Gal(F^+/F) : gv=v \}  \simeq \Gal(F^+_\nu/{F_v}),
\]
where $\nu$ is a prime of $F^+$ lying above $v$. And let $I(v)$ be the inertia subgroup of $D(v)$. Then, $I(v)=0$ if and only if $v$ divides $2$. Thus,
\[
\g_{+, v} \text{ is unramified} \Longleftrightarrow v \in S_2.
\] 
Since $\psi_v:=\g_{+, v}-\g_{-, v}$ and we always take $\g_{-, v}$ unramified,
\[
\psi_v \text{ is unramified} \Longleftrightarrow v \in S_2.
\] 
Furthermore, 
\[
\rho_v^*(\a) \text{ is trivial} \Longleftrightarrow f(D(v))=0,
\]
where $f$ is the natural projection from $\Gal(F^+/F)$ to $\Gal(F^{\a}/F)$. And $f(D(v))=0$ exactly occurs when $v$ splits in $F^{\a}$. Note that $\rho_v^*(\a)$ is always an unramified generator of $\Hom(\pi_v, \mu_2)$ if it is not trivial.

\ms

Now we are ready to compute the arithmetic Chern-Simons invariants.
\begin{thm}\label{thm:CS formula}
Suppose we are in Assumption \ref{assumptions}. Then,
\[
CS_c([\rho]) = \sum\limits_{v \in T} \inv_v (\rho_v^*(\a) \cup \psi_v)=\frac{r}{2} ~\mod \Z,
\]
where $\psi_v=\g_{+,v}-\g_{-, v}$ and $r$ is the number of primes in $S$ which are inert in $F^{\a}$.
\end{thm}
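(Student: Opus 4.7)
The plan is that the first equality is immediate from Theorem \ref{thm:decomposition formula} applied in the setup of Section 4.4 (the trivialisations $\g_+$ and $\g_{-,v}$ being supplied by the diagrams $(\star\star)$), so the real task is to evaluate each local invariant $\inv_v(\rho_v^*(\a) \cup \psi_v)$. I would carry this out by a case analysis on $v$, splitting $T = S \sqcup S_2$ (disjoint by Assumption \ref{assumptions}(3)) and applying Lemmas \ref{lem:trivial} and \ref{lem:non-trivial} to show that only those primes of $S$ which are inert in $F^\a$ contribute, each with value $\tfrac{1}{2}$.

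For $v \in S_2$: by Assumption \ref{assumptions}(3), $F^+/F$ is unramified at $v$, hence $\wt{\rho_v}$ kills $I_v$, so the formula $\g(g) = \sigma(\phi(g))\cdot g^{-1}$ from Lemma \ref{lem:trivialisation} shows $\g_{+,v}$ factors through $\pi_v/I_v$. Since $\g_{-,v}$ is always chosen unramified, $\psi_v = \g_{+,v} - \g_{-,v}$ is unramified, and Lemma \ref{lem:trivial}(2) kills the contribution. For $v \in S$: such $v$ does not divide $2$, so Lemma \ref{lem:non-trivial} is available. Because $F^\ur/F$ is unramified everywhere, $\rho_v^*(\a)$ always factors through $\pi_v/I_v$. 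If $v$ splits in $F^\a$, the decomposition subgroup $D(v)$ maps trivially to $\Gal(F^\a/F) \simeq \mu_2$, so $\rho_v^*(\a) = 1$ and Lemma \ref{lem:trivial}(1) gives $0$. If $v$ is inert in $F^\a$, then $\rho_v^*(\a)$ is the unique nontrivial unramified character of order $2$, hence a generator of $\Hom(\pi_v/I_v, \mu_2) \simeq \zmod 2$; furthermore $v \in S$ means $v$ ramifies in $F^+$, so $I(v) \ne 0$, and since $\rho_v|_{I_v} = 1$, the formula for $\g_{+,v}$ restricted to $I_v$ reduces to $g \mapsto -\wt{\rho_v}(g)$, which is nontrivial precisely because $\wt{\rho_v}|_{I_v}$ is. Thus $\g_{+,v}$, and hence $\psi_v$, is ramified. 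Lemma \ref{lem:non-trivial} then forces $\inv_v(\rho_v^*(\a) \cup \psi_v) \ne 0$ in $\tfrac{1}{2}\Z/\Z$, so the invariant must equal $\tfrac{1}{2}$.

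Summing over $v \in T$ gives $\tfrac{r}{2} \mod \Z$, as claimed. The argument is essentially bookkeeping once the decomposition formula and the two local-invariant lemmas are in place, so there is no truly hard step; the only point requiring a little care is the ramification calculation for $\g_{+,v}$ in the inert case, which follows directly from the explicit form of $\g$ in Lemma \ref{lem:trivialisation}, together with the observation that the group $\tfrac{1}{2}\Z/\Z$ has a unique nonzero element so that ``nonzero'' automatically pins down the value as $\tfrac{1}{2}$.
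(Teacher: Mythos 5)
Your proposal is correct and follows essentially the same route as the paper: apply Theorem \ref{thm:decomposition formula}, then split $T = S \sqcup S_2$ and invoke Lemmas \ref{lem:trivial} and \ref{lem:non-trivial} term by term; the only difference is that you re-derive in-line (via the explicit formula for $\g$ in Lemma \ref{lem:trivialisation}) the three ramification facts — $\psi_v$ unramified iff $v\in S_2$, $\rho_v^*(\a)$ trivial iff $v$ splits in $F^\a$, and $\rho_v^*(\a)$ an unramified generator otherwise — which the paper had already established in the discussion immediately preceding the theorem.
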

\begin{proof}
The first equality follows from Theorem \ref{thm:decomposition formula}. Thus, it suffices to compute $\inv_v(\rho_v^*(\a) \cup \psi_v)$ for $v \in T$. By Lemma \ref{lem:trivial}, 
$\inv_v(\rho_v^*(\a) \cup \psi_v)=0$ if either $\rho_v^*(\a)$ is trivial or $\psi_v$ is unramified.
By the above discussion, $\rho_v^*(\a)$ is trivial if and only if $f(D(v))=0$, i.e., $v$ splits in $F^{\a}$; and $\psi_v$ is unramified if and only if $v\in S_2$. Furthermore, if $\rho_v^*(\a)$ is not trivial and 
$\psi_v$ is ramified, then by Lemma \ref{lem:non-trivial}, $\inv_v(\rho_v^*(\a) \cup \psi_v)=\frac{1}{2}$. Thus the result follows.
\end{proof}
\ms

Therefore to provide an example of calculation of the arithmetic Chern-Simons invariants, it suffices to construct a tower of fields satisfying Assumption \ref{assumptions}, which is essentially the embedding problem in the inverse Galois theory. Instead, we will consider the similar problems over $\Q$, which are much easier to solve (or find from the table). Then, we will construct a tower satisfying Assumption \ref{assumptions} from a tower of fields over $\Q$.

\ms

\begin{assu}\label{assu:fields}
Suppose we have a number field $L$ with its subfield $K$ such that
\begin{enumerate}
\item
$\Gal(L/\Q) \simeq \Gamma$.
\item
$d_L$, the (absolute) discriminant of $L$, is an odd integer\footnote{We may consider when $d_L$ is even. Then later, it is not clear that $FL/FK$ is unramified at the primes above $2$. Some choices of $t$ (for $F$) can make it ramified. Then, it is hard to determine the value of local invariants unless $2$ splits in $F^{\a}/F$.}.
\item
$\Gal(K/\Q)\simeq A$.
\item
$\Q(\sqrt{D})$ is a quadratic subfield of $K$, where $D$ is a divisor of $d_K$.\footnote{Here, we always take that $d_K$ is odd because we cannot use Abhyankar's lemma when $p=2$, and hence we may not remove ramification in the extension $FK/F$ at the primes above $2$. In some nice situation, we may directly prove that $F(\sqrt{D})/F$ is unramified at the primes above $2$ even though $D$ is even. If so, our assumption on $d_K$ can be removed.}
\item
$K/{\Q(\sqrt{D})}$ is unramified at any finite primes.
\end{enumerate}
\end{assu}

Then, we have the following.
\begin{prop}
Let $F=\Q(\sqrt{-\abs D \cdot t})$ be an imaginary quadratic field, where $t$ is a positive squarefree integer prime to $D$ so that $F\cap L=\Q$. 
Then, there is a tower of fields $F \subset F^\ur \subset F^+$ satisfies Assumption \ref{assumptions}.
In fact, we can take 
\[
F^\ur=KF \qa F^+=LF.
\]
\end{prop}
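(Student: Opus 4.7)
The plan is to verify clauses (1), (2), (3) of Assumption \ref{assumptions} for the tower $F \subset KF \subset LF$. Clause (1), that $F$ is totally imaginary, is immediate from $-|D|t < 0$. Clause (2) concerns only the abstract cocycle $c = \alpha \cup \varepsilon$ on the extension $\Gamma \twoheadrightarrow A$ and is independent of the choice of $F$. The substantive content therefore lies in clause (3), which I would split into verifying the Galois group identifications and then the ramification behaviour.

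For the Galois groups, I would use that $F \cap L = \Q$ is a standing hypothesis on the choice of $t$, which forces $F \cap K = \Q$ since $K \subset L$. Linear disjointness then gives $\Gal(LF/F) \risom \Gal(L/\Q) \simeq \Gamma$ and $\Gal(KF/F) \risom \Gal(K/\Q) \simeq A$, and the quotient $\Gal(LF/F) \twoheadrightarrow \Gal(KF/F)$ matches $\Gal(L/\Q) \twoheadrightarrow \Gal(K/\Q)$, so the natural short exact sequence $0 \to \Z/2\Z \to \Gal(LF/F) \to \Gal(KF/F) \to 1$ is identified with the abstract extension $0 \to \Z/2\Z \to \Gamma \to A \to 1$ of Assumption \ref{assumptions}. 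The intermediate field $F^\alpha$ is then automatically the fixed field in $KF$ of $\ker(\alpha : A \to \mu_2)$ under the identification $\Gal(KF/F) \simeq A$; no independent verification is required.

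For the ramification, I would show that $KF/F$ is unramified at every finite prime (infinite places are trivial, as $F$ is totally imaginary) and that $LF/F$ is unramified above $2$. The claims above $2$ are immediate from $d_L$ being odd: $L/\Q$ is unramified at $2$, hence so is $K/\Q$, and base change preserves this to give $KF/F$ and $LF/F$ unramified above $2$. At an odd prime $p$ not dividing the squarefree part of $D$, the chain ``$K/\Q(\sqrt D)$ unramified at finite primes'' and ``$\Q(\sqrt D)/\Q$ unramified at $p$'' forces $K/\Q$ to be unramified at $p$, so $KF/F$ is unramified at $p$ whether or not $p \mid t$. At an odd prime $p$ dividing the squarefree part of $D$, both $K/\Q$ and $F/\Q$ are tamely ramified with ramification index exactly $2$: for $K$ because $e(\Q(\sqrt D)/\Q, p) = 2$ and $K/\Q(\sqrt D)$ is unramified, and for $F$ because $p$ divides the squarefree part of $-|D|t$ by the coprimality $\gcd(D,t) = 1$ and squarefreeness of $t$. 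Abhyankar's lemma in the tame setting then gives $e(KF/\Q, P) = \lcm(2,2) = 2$, whence $e(KF/F, P) = 1$ and $KF/F$ is unramified at $P$.

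The main technical obstacle is the tame Abhyankar computation at odd primes dividing the squarefree part of $D$, where one needs both ramification indices (in $K$ and in $F$) to be exactly $2$, not merely $2$ in one of them. It is precisely to arrange this matching that $F$ is taken to be $\Q(\sqrt{-|D|t})$ rather than something unrelated: multiplying $D$ by a coprime squarefree $t$ preserves the ramification of $F$ at primes dividing the squarefree part of $D$, while the factor $t$ disturbs nothing in $K$. Wild ramification at $2$, which would be inaccessible to the tame form of Abhyankar, is sidestepped entirely by invoking $d_L$ odd, and this is also why clause (3) only demands unramifiedness of $F^+/F$ above $2$ rather than everywhere.
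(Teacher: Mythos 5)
Your argument is correct and follows essentially the same route as the paper: you use $F\cap L=\Q$ and linear disjointness to identify the Galois groups, $d_L$ odd to dispose of ramification above $2$, and the tame form of Abhyankar's lemma (both $K/\Q$ and $F/\Q$ having ramification index exactly $2$ at odd primes dividing the squarefree part of $D$) to show $KF/F$ is unramified. Your write-up is a bit more explicit than the paper's in two respects — you argue directly that $LF/F$ is unramified above $2$ via the compositum of local unramified extensions (the paper instead passes through $LF/KF$ unramified above $2$ together with $KF/F$ unramified everywhere), and you separate the primes dividing $D$ but not its squarefree part from those dividing the squarefree part — but these are refinements of the same proof, not a different approach.
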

\begin{proof}
First, it is clear that $F$ is totally imaginary. 
Next, since $F\cap L=\Q$
\[
\Gal(LF/F)\simeq \Gal(L/\Q) \simeq \Gamma \qa \Gal(KF/F)\simeq \Gal(K/\Q)\simeq A.
\]
Since the discriminant of $L$ is odd, $L/K$ is unramified at the primes above $2$, and so is
$LF/{KF}$. Finally, it suffices to show that $KF/F$ is unramified everywhere. 
Since $K/{\Q(\sqrt{D})}$ is unramified everywhere, $K/\Q$ is only ramified at the primes dividing $D$.
(Note that the discriminant of $K$ is odd, hence it is unramified at $2$.) Moreover, the ramification degree of any prime divisor $p$ of $D$ is $2$, and the same is true for $F/\Q$.
 Since $p$ is odd, $KF/F$ is unramified at the primes above $p$ by Abhyankar's lemma \cite[Theorem 1]{cornell}, which implies our claim.
\end{proof}

\begin{rem}\label{rem:Abhyankar's lemma}
Since the ramification indices of any prime divisor $p$ of $D$ are $2$ in both $F/\Q$ and $K/\Q$, we can use Abhyankar's lemma in both directions. (Note that our assumption implies that $D$ is odd.) In other words, $KF/K$ is always unramified at the primes dividing $D$. 
\end{rem}

The remaining part to check Assumption \ref{assumptions} is the choice of $F^{\a}$.
Let 
\[
B:=\{ F_1, \dots, F_m\}
\] 
be the set of quadratic subfields of $F^\ur$. 
Then, there is one-to-one correspondence between the set of surjective homomorphisms $\Gal(F^\ur/F) \to \mu_2$ and $B$. 
Therefore $m=\# \Hom(A, \mu_2)-1$ and we can define $\a_i : A \to \mu_2$ so that $F^{\a_i}=F_i$ 
due to the (chosen) isomorphism $\Gal(F^\ur/F) \simeq A$.

\ms
Now, suppose $F^{\a}=F(\sqrt{M}) \subset F^\ur$ for some divisor $M$ of $D$. Let $\Q_1=\Q(\sqrt{M})$ and $\Q_2=\Q(\sqrt{N})$, where $N=(-\abs D \cdot t)/M$. Then, we have the following commutative diagram:
\[
\xyh{2.5}
\xymatrix{
& F^{\a}=F(\sqrt{M})=F(\sqrt{N}) \ar@{-}[d]^-{\text{unramified}} \ar@{-}[dr]\ar@{-}[dl]& \\
\Q_1=\Q(\sqrt{M}) &F=\Q(\sqrt{MN})&\Q_2=\Q(\sqrt{N})\\
& \Q \ar@{-}[ur] \ar@{-}[ul] \ar@{-}[u]&
}
\]
For a prime $p$, let $\wp$ denote a prime of $\cO_F$ lying above $p$.
We want to understand the splitting behaviour of $\wp$ in ${F^{\a}}$.
\begin{lem}\label{lem:splitting behaviour}
Let $p$ be an odd prime.
\begin{enumerate}
\item
Assume that $p$ divides $Dt$. Then 
\[
\wp \text{ is inert in } {F^{\a}} \Longleftrightarrow p \text{ is inert either in } \Q_1 \text{ or in } \Q_2.
\]
\item
If $p$ is inert in $F$, then $\wp$ always splits in $F^{\a}$.

\item
Assume that $p$ splits in $F$. Then
\[
\wp \text{ splits in } {F^{\a}} \Longleftrightarrow p \text{ splits in } \Q_1.
\]

\end{enumerate}
\end{lem}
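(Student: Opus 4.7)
The plan is to work inside the biquadratic extension $F^{\a}/\Q$, whose Galois group is $(\Z/2\Z)^2$ and whose three quadratic subfields are exactly $F = \Q(\sqrt{MN})$, $\Q_1 = \Q(\sqrt{M})$, $\Q_2 = \Q(\sqrt{N})$. A preliminary observation that I would use throughout is that, since $M \mid D$, since $D$ and $t$ are coprime and squarefree, and since $MN = -|D|\cdot t$, the integers $M$ and $N$ are coprime. In particular, for an odd prime $p$ dividing $Dt$ (equivalently dividing $MN$) we have that $p$ divides \emph{exactly one} of $M, N$, hence $p$ ramifies in $F$ and in exactly one of $\Q_1,\Q_2$. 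I would then fix a prime $\mathfrak{P}$ of $F^{\a}$ lying above $\wp$ and analyse the decomposition group $D_\mathfrak{P}$ and the inertia group $I_\mathfrak{P}$ inside $\Gal(F^{\a}/\Q)$, noting that whether $\wp$ splits or is inert in $F^{\a}/F$ is equivalent to whether $D_\mathfrak{P} \cap \Gal(F^{\a}/F)$ is trivial or equal to $\Gal(F^{\a}/F)$. The hypothesis that $F^{\a}/F$ is unramified everywhere gives $e(\mathfrak{P}/p) = e(\wp/p)$, so $I_\mathfrak{P}$ is determined by the ramification behaviour of $p$ in $F/\Q$ alone.

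For part (1), assume $p \mid M$ (the case $p \mid N$ being symmetric); then $p$ ramifies in $F$ and in $\Q_1$ but is unramified in $\Q_2$. The maximal subfield of $F^{\a}$ unramified at $p$ is therefore $\Q_2$, so $I_\mathfrak{P} = \Gal(F^{\a}/\Q_2)$ has order two. The quotient $D_\mathfrak{P}/I_\mathfrak{P}$ is the decomposition group of $p$ in $\Gal(\Q_2/\Q)$, which is trivial or of order two according to whether $p$ splits or is inert in $\Q_2$. Hence $\wp$ is inert in $F^{\a}/F$ iff $D_\mathfrak{P} \supsetneq I_\mathfrak{P}$ iff $p$ is inert in $\Q_2$. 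Since $p$ ramifies in $\Q_1$, the condition ``$p$ inert in $\Q_1$ or $\Q_2$'' collapses to ``$p$ inert in $\Q_2$'', which gives the claimed equivalence.

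For parts (2) and (3), since $p$ is assumed unramified in $F$, the odd prime $p$ does not divide the squarefree integer $MN$, hence is unramified in all of $F, \Q_1, \Q_2$, and therefore in $F^{\a}/\Q$. In this situation $I_\mathfrak{P}=1$ and $D_\mathfrak{P} = \langle \Frob_p\rangle$. For part (2), if $p$ is inert in $F$, then $\Frob_p$ acts nontrivially on $F$, so $\Frob_p \notin \Gal(F^{\a}/F)$ and consequently $D_\mathfrak{P} \cap \Gal(F^{\a}/F) = 1$, which means $\wp$ splits in $F^{\a}/F$. For part (3), if $p$ splits in $F$, then $\Frob_p \in \Gal(F^{\a}/F)$, and $\wp$ splits in $F^{\a}/F$ iff $\Frob_p = 1$. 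Because $F^{\a} = F\cdot \Q_1$ and $\Gal(F^{\a}/F)\cap\Gal(F^{\a}/\Q_1) = \Gal(F^{\a}/F^{\a}) = 1$, the condition $\Frob_p=1$ is equivalent to $\Frob_p$ fixing both $F$ and $\Q_1$, i.e., to $p$ splitting in both $F$ and $\Q_1$. Given that $p$ already splits in $F$, this reduces to $p$ splits in $\Q_1$, as claimed.

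The argument is essentially bookkeeping once one sets up the biquadratic picture; there is no serious obstacle. The one spot that deserves care is the justification that $\gcd(M,N)=1$ (so that in part (1) exactly one of the two primes ramifies) and the identification of $I_\mathfrak{P}$ with $\Gal(F^{\a}/\Q_2)$ (resp.~$\Gal(F^{\a}/\Q_1)$), for which the key input is the global hypothesis that $F^{\a}/F$ is unramified everywhere, so that the inertia of a prime above $p$ in $F^{\a}/\Q$ is forced to equal the inertia in $F/\Q$.
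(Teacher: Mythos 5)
Your proof is correct, and it arrives at the same conclusions by a slightly different and more uniform route than the paper. The paper's own argument is more computational: for part (1) it simply cites the commutative diagram of quadratic subfields, and for parts (2) and (3) it works with Legendre symbols $\leg{M}{p}$, $\leg{N}{p}$, $\leg{MN}{p}$ and counts primes above $p$ in $\Q_1$ and in $F^{\a}$, deducing the splitting behaviour of $\wp$ from the counts. You instead frame everything in terms of decomposition and inertia subgroups inside $\Gal(F^{\a}/\Q)\cong(\Z/2\Z)^2$: you fix a prime $\mathfrak{P}$ above $\wp$, identify $I_\mathfrak{P}$ (using the hypothesis that $F^{\a}/F$ is unramified everywhere, which pins $I_\mathfrak{P}$ down to the subgroup fixing whichever of $\Q_1,\Q_2$ is unramified at $p$), and then read off the splitting behaviour of $\wp$ in $F^{\a}/F$ from $D_\mathfrak{P}\cap\Gal(F^{\a}/F)$. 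In the unramified cases you reduce the question to whether $\Frob_p$ lies in $\Gal(F^{\a}/F)$ and whether it is trivial. What this buys you is a single mechanism covering all three parts and making the role of the ``unramified everywhere'' hypothesis explicit; the paper's Legendre-symbol approach is more elementary and more in keeping with the explicit quadratic-reciprocity computations it carries out in the surrounding propositions. Your preliminary observation that $\gcd(M,N)=1$ (hence that an odd $p\mid Dt$ ramifies in exactly one of $\Q_1,\Q_2$) is a correct and useful point that the paper leaves implicit; it does rely on $D$ being squarefree, which is what the paper's examples tacitly assume and what makes $\Q(\sqrt{D})$ unambiguous.
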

\begin{proof}$~$
\begin{enumerate}
\item
In this case, $p$ is ramified in $F$, and $p$ is ramified either in $\Q_1$ or in $\Q_2$.
Without loss of generality, let $p$ is ramified in $\Q_2$. Then, $\wp$ is inert in $F^{\a}$ if and only if $p$ is inert in $\Q_1$ from the above commutative diagram.

\item
Let $\leg {a}{b}$ denote the Legendre symbol.
If $p$ is inert in $F$, then $\leg{MN}{p}=-1$. Therefore either $\leg Mp=1$ or $\leg Np=1$. Without loss of generality, let $\leg M p=1$ and $\leg Np=-1$.
Then, $p$ splits in $\Q_1$ and hence there are at least two primes in $F^{\a}$ above $p$. 
Since $\wp$ is the unique prime of $F$ above $p$, $\wp$ splits in $F^\a$.

\item
Since $\leg {MN}p=1$, either $\leg Mp=\leg Np=1$ or 
$\leg Mp=\leg Np=-1$.
If $\leg Mp=-1$, then there is only one prime in $\Q_1$ above $p$. Thus, there are at most two primes in $F^{\a}$ above $p$. Since $p$ already splits in $F$, $\wp$ is inert in $F^{\a}$. On the other hand, if $\leg Mp=1$, then $p$ splits completely in $F^{\a}$ because $p$ splits completely both in $\Q_1$ and $F$. Thus, $\wp$ splits in $F^{\a}$.
\end{enumerate}
\end{proof}

Let $D_L=d_{L}/{d_K^2}$ be the norm (to $\Q$) of the relative discriminant of $L/K$. Then, $L/K$ is precisely ramified at the primes dividing $D_L$, and hence 
\[
S \subset \{ \fp \in \Spec (\cO_F) : \fp \mid D_L \}.
\]
(Note that $S$ is the set of primes in $\cO_F$ that ramify in $F^+$.) Let $s$ be the number of prime divisors of 
$(D_L, D)$, which are inert either in $\Q_1$ or in $\Q_2$.
Then, we have the following.

\begin{thm}\label{thm:final formula}
Assume that we have $\rho$ and $c$ as above. Then,
\[
CS_c([\rho]) \equiv \frac{s}{2} \pmod \Z.
\]
\end{thm}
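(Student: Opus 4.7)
The plan is to deduce this from Theorem \ref{thm:CS formula}, which already gives $CS_c([\rho]) \equiv r/2 \pmod \Z$, where $r := \#\{\wp \in S : \wp \text{ is inert in } F^{\a}/F\}$. Hence it suffices to prove $r \equiv s \pmod 2$. I would proceed by partitioning rational primes $p$ by their behaviour in $F/\Q$ together with their divisibility by $D$ and $D_L$, and in each class count the contribution to $r$ modulo $2$, showing that only primes dividing $(D, D_L)$ and inert in $\Q_1$ or $\Q_2$ actually matter.

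First I would identify the primes $\wp \in S$. Since $FK = F^{\ur}/F$ is unramified everywhere by the preceding proposition, $\wp \in S$ if and only if $\wp$ is ramified in $FL/F$. Applying Abhyankar's lemma to the linearly disjoint tame extensions $F/\Q$ and $L/\Q$ (using that $d_K$ and $d_L$ are odd), the ramification index at a prime of $FL$ above an odd $p$ equals $\mathrm{lcm}(e_F(p), e_L(p))$, so $\wp \in S$ precisely when $e_L(p) \nmid e_F(p)$. The possible values $e_L(p) \in \{1,2,4\}$ are read off from whether $p \mid D$ (governing $e_K(p) \in \{1,2\}$ via the chain $\Q \subset \Q(\sqrt D) \subset K$) and whether $p \mid D_L$ (governing the quadratic extension $L/K$ at primes above $p$). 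A short case check yields: $\wp \in S$ exactly when either (i) $p \mid (D, D_L)$, or (ii) $p \nmid Dt$ and $p \mid D_L$. The delicate case $p \mid t$ with $p \nmid D$ and $p \mid D_L$ gives $e_F(p) = e_L(p) = 2$, hence $\wp \notin S$; and $p = 2$ never lies in $S$ because $d_L$ is odd.

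For the second step, I would apply Lemma \ref{lem:splitting behaviour} to each surviving case. In case (i), $p$ ramifies in $F$, so there is a unique $\wp \mid p$, which by part (1) is inert in $F^{\a}$ if and only if $p$ is inert in $\Q_1$ or in $\Q_2$; these primes are exactly those counted by $s$, contributing $s$ to $r$. In case (ii), $p$ is unramified in $F$: if $p$ is inert in $F$, the unique $\wp$ automatically splits in $F^{\a}$ by part (2) and contributes $0$; if $p$ splits in $F$, the two primes above $p$ behave identically in $F^{\a}$ by part (3), so they contribute $0$ or $2$ to $r$, hence $0 \pmod 2$. Summing over all $p$, $r \equiv s \pmod 2$, which gives the theorem.

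The main obstacle is the ramification analysis in the first step: one must carefully isolate which primes of $F$ above divisors of $D_L$ actually ramify in $F^+$, and in particular disentangle the contributions from primes dividing $t$ from those dividing $D$. Abhyankar's lemma handles this cleanly provided one organises the ramification data of $F/\Q$ and $L/\Q$ and keeps in mind that the ``excess'' ramification in $L/\Q$ over that already present in $F/\Q$ is what is detected by $S$.
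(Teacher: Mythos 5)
Your proof is correct and follows essentially the same strategy as the paper's: first determine the set $S$ via Abhyankar's lemma (you phrase it uniformly through the $\operatorname{lcm}$ formula for tame ramification in the compositum $FL$, while the paper does a case-by-case check using Remark \ref{rem:Abhyankar's lemma}, but the outcome — $\wp\in S$ iff $p\mid D_L$ and $p\nmid t$ — is identical), and then apply Lemma \ref{lem:splitting behaviour} prime by prime, noting that split primes contribute in pairs and inert primes contribute nothing, so only the ramified primes dividing $(D,D_L)$ survive modulo $2$, giving $r\equiv s\pmod 2$.
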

\begin{proof}
First, we show that 
\[
S=\{ \fp \in \Spec (\cO_F) : \fp \mid D_L ~\text{ but } ~\fp \nmid t \}.
\] 
For a prime divisor $p$ of $D_L$ which does not divide $t$, we show that $KF/K$ is unramified at any primes above $p$, which implies that $LF/KF$ is ramified at the primes above $p$.
If $p$ does not divide $D$, then this is done because $p$ is unramified in $F$. On the other hand, if $p$ divides $D$, $KF/K$ is unramified at the primes above $p$ by Remark \ref{rem:Abhyankar's lemma}. Now, assume that $p$ divides $(D_L, t)$, and let $\wp$ be a prime of $\cO_K$ lying above $p$. Then, $\wp$ is ramified both in $L/K$ and in $KF/K$. 
(Note that since $(t, D)=1$, $K/\Q$ is unramified at $p$ but $F/\Q$ is ramified at $p$.) 
Therefore by the same argument as in Remark \ref{rem:Abhyankar's lemma}, $LF/KF$ is unramified at the primes above $p$, which proves the above claim.

Next, by Theorem \ref{thm:CS formula} it suffices to compute the number of primes in $S$ which are inert in $F^{\a}$.  
Let $\wp \in S$ be a prime above an odd prime $p$. 
Assume that $p$ does not divide $D$. (Then $p$ is unramified in $F$.) If $p$ is inert in $F$, then $\wp$ always splits in $F^{\a}$ by Lemma \ref{lem:splitting behaviour}. If $p$ splits in $F$ and $p\cO_F = \wp \cdot \wp'$, then $\wp$ is inert (in $F^{\a}$) if and only if $\wp'$ is inert.
Therefore to compute the invariant, the contribution from such split primes can be ignored.
So, we may assume that $p$ divides $D$. Then, there is exactly one (ramified) prime $\wp$ in $\cO_F$ above $p$, and our claim follows from Lemma \ref{lem:splitting behaviour}.
\end{proof}

We remark that the computation of $s$ is completely easy 
because $\Q_1/\Q$ and $\Q_2/\Q$ are just quadratic fields. And this also illustrates that we only need information on 
the primes dividing $(D_L, D)$ for the computation.

\subsection{Case 1: cyclic group}
Let $A=\zmod 2$, and $\Gamma=\zmod 4$. Then, we can easily find Galois extensions $L/K/\Q$ in Assumption \ref{assu:fields} by the theory of cyclotomic fields. 
\ms

Let $p$ be a prime congruent to $1$ modulo $4$. Then, 
we can take $L$ as the degree 4 subfield of $\Q(\mu_p)$, and $K=\Q(\sqrt{p})$. Moreover, $d_L=p^3$ and $d_K=p$.
\ms

Let $F=\Q(\sqrt{-p\cdot t})$, where $t$ is a positive squarefree integer prime to $p$. (Then, $F\cap L =\Q$.)
\begin{prop} \label{abelian1}
Let $\rho$ and $c$ be chosen so that $F^{\a}=F^\ur = FK$ and $F^+=FL$. Then, 
\[
CS_c([\rho]) = \frac{1}{2} \Longleftrightarrow \leg {t}{p} =-1.
\]
\end{prop}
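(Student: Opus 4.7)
The plan is to reduce the proposition directly to Theorem \ref{thm:final formula}, which expresses $CS_c([\rho]) \equiv s/2 \pmod{\Z}$, where $s$ is the number of primes dividing $(D_L, D)$ that are inert in either $\Q_1$ or $\Q_2$. So the whole proof is essentially a matter of computing $s$ in this specific setup.

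First I would verify that the hypotheses of Assumption \ref{assu:fields} are met with $L$ the degree-$4$ subfield of $\Q(\mu_p)$ and $K = \Q(\sqrt{p})$. Indeed $\Gal(L/\Q) \simeq \Z/4\Z = \Gamma$, $\Gal(K/\Q) \simeq \Z/2\Z = A$, and $d_L = p^3$ is odd. Since $L/\Q$ ramifies only at $p$ (with ramification index $4$) and $K/\Q$ is totally ramified at $p$ (with index $2$), $L/K$ is unramified at every finite prime; the quadratic subfield $\Q(\sqrt{D})$ of $K$ is $K$ itself, with $D = p$ a divisor of $d_K = p$. Also $F \cap L = \Q$, since the unique quadratic subfield of $L$ is $\Q(\sqrt{p})$, which differs from $F = \Q(\sqrt{-pt})$ because $-pt < 0$. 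Next I would extract the numerical data. The relative-discriminant norm is $D_L = d_L/d_K^2 = p$, so $(D_L, D) = p$ and only the single prime $p$ contributes to $s$. Because $F^{\a} = FK = F(\sqrt{p})$, the parameter $M$ in Section \ref{sec: examples} equals $p$; hence $N = -|D|t/M = -t$, and the pair of auxiliary quadratic fields is $\Q_1 = \Q(\sqrt{p})$, $\Q_2 = \Q(\sqrt{-t})$.

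Finally I would analyse the splitting of $p$ in each of $\Q_1, \Q_2$. The prime $p$ is ramified in $\Q_1$, so it is not inert there. In $\Q_2$ it is inert precisely when $\leg{-t}{p} = -1$, and since $p \equiv 1 \pmod 4$ we have $\leg{-1}{p} = 1$, so $\leg{-t}{p} = \leg{t}{p}$. Consequently $s = 1$ if $\leg{t}{p} = -1$ and $s = 0$ otherwise, and the desired equivalence follows from Theorem \ref{thm:final formula}. There is no serious obstacle: all the cohomological content has been absorbed into Theorem \ref{thm:CS formula} and Theorem \ref{thm:final formula}, and what remains is bookkeeping about $M, N$ and an elementary Legendre-symbol identity enabled by the congruence $p \equiv 1 \pmod 4$.
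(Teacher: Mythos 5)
Your proof is correct and follows essentially the same route as the paper: reduce to Theorem \ref{thm:final formula}, compute $(D_L, D) = p$, note that $p$ is ramified (hence not inert) in $\Q_1 = \Q(\sqrt{p})$, and observe that $p$ is inert in $\Q_2 = \Q(\sqrt{-t})$ exactly when $\leg{t}{p} = -1$ using $p \equiv 1 \pmod 4$. The paper's own proof is just a terser version of this same bookkeeping.
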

\begin{proof}
By Theorem \ref{thm:final formula}, it suffices to check whether $p$ is inert in $\Q(\sqrt{-t})$.
If it is inert, then $CS_c([\rho])=\frac{1}{2}$, and $0$ otherwise. Since $p\equiv 1 \pmod 4$, the result follows.
\end{proof}

\subsection{Case 2: non-cyclic abelian group}
Let $A=V_4:=\zmod 2 \times \zmod 2$, the Klein four group, and $\Gamma=\cQ_8=\cQ$, the quaternion group. To find Galois extensions $L/K/\Q$ in Assumption \ref{assu:fields}, we first study quaternion extensions of $\Q$ in general.

\begin{prop}\label{prop:quaternion}
Let $L/\Q$ be a Galois extension whose Galois group is isomorphic to $\cQ$. Suppose that $d_L$ is odd. 
Let $K$ be a subfield of $L$ with $\Gal(L/K)\simeq \zmod 2$. Then,
\begin{enumerate}
\item
$K=\Q(\sqrt{d_1}, \sqrt{d_2})$ for some positive squarefree $d_1$ and $d_2$.
\item
$d_1 \equiv d_2 \equiv 1 \pmod 4$.
\item
Let $p$ be a prime divisor of $d_1d_2$. Then, $p$ divides $D_L:=d_L/{d_K^2}$.
\end{enumerate}
\end{prop}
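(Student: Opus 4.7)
The plan is to exploit two structural features of the quaternion group $\cQ$: it has a unique subgroup of order $2$ (its center $Z$), and every cyclic subgroup of $\cQ$ of even order contains $Z$. First I would identify $K$: since $\Gal(L/K)\cong \Z/2\Z$ and $Z$ is the only order-$2$ subgroup of $\cQ$, necessarily $\Gal(L/K)=Z$, so $\Gal(K/\Q)\cong\cQ/Z\cong V_4$ and $K$ is biquadratic. For (1) I would further observe that any complex conjugation $c\in\Gal(L/\Q)$ has order dividing $2$, hence lies in $Z$; thus $c$ fixes $K$ pointwise and $K$ is totally real. This gives $K=\Q(\sqrt{d_1},\sqrt{d_2})$ for positive squarefree integers $d_1,d_2$, which we may take coprime.

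For (2), the hypothesis that $d_L$ is odd says $L/\Q$ is unramified at $2$, and hence so is each quadratic subfield $\Q(\sqrt{d_i})\subset K$. Since $\Q(\sqrt{d})$ with $d$ squarefree is unramified at $2$ exactly when $d\equiv 1\pmod 4$, this yields $d_1\equiv d_2\equiv 1\pmod 4$.

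For (3), let $p\mid d_1d_2$; by (2) such a $p$ is odd. Pick a prime $\fP$ of $L$ above $p$ and let $I=I(\fP\mid p)\subset\Gal(L/\Q)$ be its inertia group, which is cyclic by tameness. The image of $I$ in $V_4=\Gal(K/\Q)$ is the inertia at $\fp:=\fP\cap K$, and it has order exactly $2$: indeed $p$ ramifies in precisely two of the three quadratic subfields of $K$ (those whose discriminant is divisible by $p$) and is unramified in the third. Thus $|I|/|I\cap Z|=2$. If $|I|=2$, then $I$ would coincide with the unique order-$2$ subgroup $Z$, forcing $|I\cap Z|=2$ and contradicting this ratio. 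Hence $|I|=4$, and since any cyclic subgroup of $\cQ$ of order $4$ contains $Z$, we get $I\cap\Gal(L/K)=Z$ of order $2$, which means $L/K$ ramifies at $\fp$. Consequently $p\mid N_{K/\Q}(\mathfrak{d}_{L/K})=D_L$.

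The main obstacle is the group-theoretic bookkeeping in (3); everything there reduces to the rigid lattice of cyclic subgroups of $\cQ$ combined with tameness at odd primes, and I expect no further subtlety beyond setting this up carefully.
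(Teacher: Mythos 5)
Your argument is correct, and for parts (1) and (2) it essentially matches the paper (unique order-$2$ subgroup of $\cQ$ gives $\Gal(K/\Q)\cong V_4$ and forces $K$ totally real; oddness of $d_L$ forces $d_i\equiv 1\pmod 4$). For part (3), however, you take a genuinely different and more self-contained route. The paper invokes Lemmermeyer's \cite[Corollary 3]{lemme}, which packages the ramification statement for quaternion extensions of $V_4$-fields, and splits into the cases $p\mid d_1$ only, $p\mid d_2$ only, and $p\mid\gcd(d_1,d_2)$. You instead argue directly with the inertia group $I\subset\cQ$ at a prime of $L$ above $p$: tameness forces $I$ cyclic, the image of $I$ in $V_4$ is the (order-$2$) inertia in $K/\Q$, and since the only order-$2$ cyclic subgroup of $\cQ$ is the center $Z$ (which maps trivially to $V_4$), $I$ must have order $4$ and hence contain $Z=\Gal(L/K)$. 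This is a cleaner and more transparent reduction to the subgroup lattice of $\cQ$, and it avoids the external citation and the case split; what Lemmermeyer's reference buys the paper is brevity, at the cost of opacity.

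One small inaccuracy worth flagging: after establishing $K=\Q(\sqrt{d_1},\sqrt{d_2})$ with $d_1,d_2$ positive squarefree, you add ``which we may take coprime.'' This is not achievable in general -- for instance $\Q(\sqrt{6},\sqrt{10})=\Q(\sqrt{6},\sqrt{15})=\Q(\sqrt{10},\sqrt{15})$ has no pair of coprime squarefree radicands generating it. Fortunately this does not affect your part (3): the assertion that an odd $p\mid d_1d_2$ ramifies in exactly two of the three quadratic subfields of $K$ (equivalently, that the inertia of $p$ in $V_4$ has order exactly $2$) holds whether or not $d_1$ and $d_2$ are coprime, and indeed the order-$2$ statement already follows just from ``$p$ odd and ramified in $K$'' together with tameness and cyclicity. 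So you should simply delete the coprimality remark; everything else stands.
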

\begin{proof}
Since $K$ is a subfield of $L$, $d_K$ is also odd. And since $\cQ$ has a unique subgroup of order $2$, which is normal, $K/\Q$ is Galois and $\Gal(K/\Q)\simeq \zmod 2 \times \zmod 2$. Therefore $K=\Q(\sqrt{d_1}, \sqrt{d_2})$, where $d_1$ and $d_2$ are products of prime discriminants. If $L$ is totally real, then $K$ must be totally real as well. If $L$ is not totally real, then the complex conjugation generates a subgroup of $\Gal(L/\Q)$ of order $2$. Since $\cQ$ has a unique subgroup of order $2$, $K$ must be a fixed field of the complex conjugation, which implies that $K$ is totally real. So, $d_1$ and $d_2$ can be taken as positive squarefree integers. Moreover, since they are products of prime discriminants and odd, $d_1 \equiv d_2 \equiv 1 \pmod 4$.

Finally, let $p$ be a prime divisor of $d_1$, which does not divide $d_2$. Note that ${\Q(\sqrt{d_1})} \subset K \subset L$ and $L/{\Q({\sqrt{d_1}})}$ is a cyclic extension of degree $4$. Since $p$ does not divide $d_2$, $\Q(\sqrt{d_2})/\Q$ is unramified at $p$ and hence $K/{\Q(\sqrt{d_2})}$ is ramified at the primes dividing $p$. 
By \cite[Corollary 3]{lemme}, $L/K$ is ramified at the primes above $p$ and hence $p$ divides $D_L$. 
By the same argument, the claim follows when $p$ is a divisor of $d_2$, which does not divide $d_1$.  
Let $p$ be a prime divisor of $(d_1, d_2)$. Then, since $K=\Q(\sqrt{d_1}, \sqrt{d_2})=\Q(\sqrt{d_1}, \sqrt{d_1d_2})=\Q(\sqrt{d_1}, \sqrt{\frac{d_1d_2}{p^2}})$ and $p$ does not divide $\frac{d_1d_2}{p^2}$, the result follows by the same argument as above.
\end{proof}

Now, let $d_1$ and $d_2$ be two squarefree positive integers such that
\begin{itemize}
\item
$d_1 \equiv d_2 \equiv 1 \pmod 4$.
\item
$(d_1, d_2)=1$.\footnote{This is not a vacuous condition.
In fact, there is a $\cQ$-extension $L$ containing $\Q(\sqrt{21}, \sqrt{33})$ \cite{lmfdb8}.}
\end{itemize} 
\ms

Let $K=\Q(\sqrt{d_1}, \sqrt{d_2})$. Suppose that there is a number field $L$ such that
\begin{itemize}
\item
$L/\Q$ is Galois and $\Gal(L/\Q) \simeq \cQ$.
\item
$L$ contains $K$ and the discriminant $d_L$ of $L$ is odd.
\end{itemize} 
\ms

Let $F=\Q(\sqrt{-d_1d_2 \cdot t})$, where $t$ is a positive squarefree integer prime to $d_1 d_2$.
Then $L \cap F = \Q$ because all quadratic subfields of $L$ are contained in $K$, which is totally real. Since $\Hom(A, \mu_2)$ is of order $4$, there are three quadratic subfield of $FK$ over $F$:
\[
F_1:=F(\sqrt{d_1}), ~F_2:=F(\sqrt{d_2}), \text{and } F_3:=F(\sqrt{d_1d_2})=F(\sqrt{-t}).
\]

\begin{prop}\label{prop:V4 general example}
Let $\rho$ and $c_i=\a_i \cup \e$ be chosen so that $F^{\a_i}=F_i$, $F^\ur = FK$ and $F^+=FL$. Then,
\begin{align*}
CS_{c_1}([\rho]) = \frac{1}{2} &\Longleftrightarrow \prod_{p \mid d_1} \leg {-d_2 \cdot t}{p} \times \prod_{p \mid d_2} \leg {d_1}{p}=-1.\\
CS_{c_2}([\rho]) = \frac{1}{2} &\Longleftrightarrow \prod_{p \mid d_1} \leg {d_2}{p} \times \prod_{p \mid d_2} \leg {-d_1\cdot t}{p}=-1.\\
CS_{c_3}([\rho]) = \frac{1}{2} &\Longleftrightarrow \prod_{p \mid d_1d_2}  \leg {-t}{p}=-1.
\end{align*}
\end{prop}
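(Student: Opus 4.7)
The plan is to deduce this directly from Theorem \ref{thm:final formula}, so the task reduces to identifying, for each $c_i$, the pair of quadratic fields $\Q_1, \Q_2$ attached to $F^{\a_i}$, and then counting the appropriate split/inert behaviour of the primes dividing $D = d_1 d_2$. First I would check that Assumption \ref{assumptions} and Assumption \ref{assu:fields} are satisfied by the chosen tower $F \subset FK \subset FL$: the field $F = \Q(\sqrt{-d_1 d_2 \cdot t})$ is totally imaginary; the quadratic subfields of $K$ are totally real so $F \cap L = \Q$; $d_L$ is assumed odd so $LF/KF$ is unramified above $2$; and Abhyankar's lemma together with $(d_1 d_2, t)=1$ and $d_1 \equiv d_2 \equiv 1 \pmod 4$ gives unramifiedness of $FK/F$ everywhere.

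Next, I would apply Proposition \ref{prop:quaternion}(3) to conclude that every prime divisor of $D = d_1 d_2$ divides $D_L$, so that the set on which $s$ is computed in Theorem \ref{thm:final formula} is exactly $\{p : p \mid d_1 d_2\}$. Then I would identify the three nontrivial characters $\a_i : V_4 \to \mu_2$ with the three quadratic subfields $F_i$ of $FK/F$, and write for each $i$ the decomposition $M_i \cdot N_i = -d_1 d_2 t$ with $F^{\a_i} = F(\sqrt{M_i})$: concretely $(M_1, N_1) = (d_1, -d_2 t)$, $(M_2, N_2) = (d_2, -d_1 t)$, $(M_3, N_3) = (d_1 d_2, -t)$.

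With $\Q_1 = \Q(\sqrt{M_i})$ and $\Q_2 = \Q(\sqrt{N_i})$ in hand, the core of the argument is combinatorial: for each prime $p \mid d_1 d_2$, since $(d_1, d_2)=1$ and $(d_1 d_2, t)=1$, $p$ is ramified in exactly one of $\Q_1, \Q_2$, so by Lemma \ref{lem:splitting behaviour}(1) the unique prime $\wp$ of $F$ above $p$ is inert in $F^{\a_i}$ iff $p$ is inert in the unramified one. For $c_1$ this gives contribution $\leg{-d_2 t}{p} = -1$ from $p \mid d_1$ and $\leg{d_1}{p} = -1$ from $p \mid d_2$; the product over all such $p$ gives $(-1)^{s_1}$, and the other two cases are analogous with $M_i, N_i$ swapped as listed above. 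Theorem \ref{thm:final formula} then yields $CS_{c_i}([\rho]) = s_i/2 \pmod \Z$, and each stated equivalence is just the assertion that $s_i$ is odd, i.e.\ that the corresponding product of Legendre symbols equals $-1$.

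The only potential friction I see is bookkeeping — keeping the roles of $M_i$ and $N_i$ straight across the three cases and verifying in each case that a prime $p \mid d_1 d_2$ is ramified in the $\Q_j$ one expects (so Lemma \ref{lem:splitting behaviour}(1) applies with the correct field in the inert slot). Everything else is a direct invocation of previously established results; there is no genuine analytic or cohomological obstacle beyond Theorem \ref{thm:final formula} itself.
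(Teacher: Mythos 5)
Your proposal is correct and takes essentially the same route as the paper: reduce to Theorem \ref{thm:final formula}, use Proposition \ref{prop:quaternion}(3) to see that the prime divisors of $(D_L, D)$ are exactly the prime divisors of $d_1 d_2$, identify $(\Q_1, \Q_2)$ for each $\a_i$, and count inert primes via Lemma \ref{lem:splitting behaviour}(1). The paper's proof is terser — it only spells out the $c_1$ case and asserts the other two follow similarly — but the underlying argument is identical, and your explicit bookkeeping of $(M_i, N_i)$ is the right way to keep the three cases straight.
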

\begin{proof}
By the above lemma and Theorem \ref{thm:final formula}, it suffices to compute the number of prime divisors of $d_1 d_2$, which are inert in $\Q_1$ or in $\Q_2$. 

First, compute $CS_{c_1}([\rho])$. In this case, $\Q_1=\Q(\sqrt{d_1})$ and $\Q_2=\Q(\sqrt{-d_2\cdot t})$. If $p$ is a divisor of $d_1$, it is inert in $\Q_2$ if and only if
\[
\leg {-d_2 \cdot t}{p}=-1.
\]
Therefore, the number of such prime divisors of $d_1$ is odd if and only if
\[
\prod_{p \mid d_1} \leg {-d_2 \cdot t}{p}=-1.
\]
Similarly, the number of prime divisors of $d_2$, which are inert in $\Q_1$, is odd if and only if
\[
\prod_{p \mid d_2} \leg {d_1}{p}=-1.
\]
Thus, we have
\[
CS_{c_1}([\rho])=\frac{1}{2} \Longleftrightarrow \prod_{p \mid d_1} \leg {-d_2 \cdot t}{p} \times \prod_{p \mid d_2} \leg {d_1}{p}=-1.
\]
The remaining two cases can easily be done by the same method as above.
\end{proof}
\ms

We can find Galois extensions $L/K/\Q$ satisfying the above assumptions from the database.
Here, we take $L/K/\Q$ from the LMFDB \cite{lmfdb} as follows. Let 
\small
\[
g(x)=x^8-x^7+98x^6-105x^5+3191x^4+1665x^3+44072x^2+47933x+328171
\]
\normalsize
be an irreducible polynomial over $\Q$, and $\b$ be a root of $g(x)$. Let 
\[
L=\Q(\b) \qa K=\Q(\sqrt{5}, \sqrt{29}).
\]
So, $d_1=5$ and $d_2=29$. Moreover, $D_L=3^2\cdot 5^2 \cdot 29^2$. 
\ms

Let $F=\Q(\sqrt{-5\cdot 29 \cdot t})$, where $t$ is a positive squarefree integer prime to $5 \cdot 29$. 

\begin{cor}\label{cor:V4 example1}
Let $\rho$ and $c_i=\a_i \cup \e$ be chosen as above. Then,
\begin{align*}
CS_{c_1}([\rho]) = \frac{1}{2} &\Longleftrightarrow \leg t 5 = -1 \Longleftrightarrow t \equiv \pm 2 \pmod 5.\\
CS_{c_2}([\rho]) = \frac{1}{2} &\Longleftrightarrow \leg t {29}=-1.\\
CS_{c_3}([\rho]) = \frac{1}{2} &\Longleftrightarrow \leg t 5 =-  \leg t {29}.
\end{align*}
\end{cor}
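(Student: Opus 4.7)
The plan is to specialise Proposition \ref{prop:V4 general example} to the numerical data $d_1=5$, $d_2=29$ and simplify each Legendre-symbol product. Since $5 \equiv 29 \equiv 1 \pmod 4$, we have $\leg{-1}{5} = \leg{-1}{29} = 1$, so every occurrence of a sign inside a Legendre symbol at either $5$ or $29$ drops out. The key auxiliary computation is $\leg{5}{29} = \leg{29}{5}$ by quadratic reciprocity (both primes $\equiv 1 \pmod 4$), and $\leg{29}{5} = \leg{4}{5} = 1$; similarly one finds $\leg{-29}{5} = \leg{1}{5} = 1$ since $-29 \equiv 1 \pmod 5$. With these evaluations in hand, each of the three criteria in Proposition \ref{prop:V4 general example} collapses to a product involving only $\leg{t}{5}$ and/or $\leg{t}{29}$.

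More precisely, for $CS_{c_1}$ the first factor becomes $\leg{-29 \cdot t}{5} = \leg{t}{5}$ and the second factor becomes $\leg{5}{29} = 1$, so the condition is $\leg{t}{5} = -1$, i.e.\ $t \equiv \pm 2 \pmod 5$. For $CS_{c_2}$ the first factor is $\leg{29}{5} = 1$ and the second is $\leg{-5 \cdot t}{29} = \leg{t}{29}$, yielding the condition $\leg{t}{29} = -1$. For $CS_{c_3}$ the product $\leg{-t}{5} \cdot \leg{-t}{29}$ reduces to $\leg{t}{5} \cdot \leg{t}{29}$, so the condition $= -1$ is exactly $\leg{t}{5} = -\leg{t}{29}$.

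The only thing one should verify before invoking Proposition \ref{prop:V4 general example} is that the chosen $L$ and $K$ do satisfy Assumption \ref{assu:fields}: namely that $d_L$ is odd, that $\Gal(L/\Q) \simeq \mathcal{Q}_8$ with the unique non-cyclic quartic subfield $K = \Q(\sqrt{5}, \sqrt{29})$ (fixed by the centre), and that $L/K$ is unramified at the primes above $2$. All three properties can be read off from the LMFDB entry for the octic polynomial $g(x)$ displayed above, together with the fact that $D_L = 3^2 \cdot 5^2 \cdot 29^2$ is coprime to $2$; the fields $\Q(\sqrt{5})$ and $\Q(\sqrt{29})$ have fundamental discriminants $5$ and $29$, so they are obviously unramified outside $\{5, 29\}$, and $K/\Q(\sqrt{d_i})$ is likewise unramified at $2$. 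No part of this argument is delicate; the routine step that requires the most care is simply bookkeeping the signs and reciprocity applications, all of which are trivialised by the congruences $d_1, d_2 \equiv 1 \pmod 4$. The main obstacle, if any, is less mathematical than verificational: one must trust (or independently check) that the specific octic $g(x)$ cuts out a $\mathcal{Q}_8$-extension of $\Q$ containing $K$ with odd discriminant, after which the corollary follows from pure Legendre-symbol manipulation.
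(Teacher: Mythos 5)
Your proof is correct and takes the same route as the paper: specialize Proposition~\ref{prop:V4 general example} to $d_1=5$, $d_2=29$ and evaluate the Legendre symbols, using that $5\equiv 29\equiv 1\pmod 4$ (so $\leg{-1}{5}=\leg{-1}{29}=1$), $\leg{29}{5}=\leg{4}{5}=1$, and $\leg{5}{29}=\leg{29}{5}=1$ by quadratic reciprocity. The remaining observation that the relevant primes to test for inertness are those dividing $(D_L,D)=5\cdot 29$ (so the factor of $3$ in $D_L$ plays no role) is already built into Proposition~\ref{prop:V4 general example}, and the verification of Assumption~\ref{assu:fields} for the LMFDB octic is exactly the check the paper implicitly relies on.
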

\ms

Now, we provide another example. Let $L/K/\Q$ from the the LMFDB \cite{lmfdb1} as follows. Let 
\[
g(x)=x^8-x^7-34x^6+29x^5+361x^4-305x^3-1090x^2+1345x-395
\]
be an irreducible polynomial over $\Q$, and $\b$ be a root of $g(x)$.  Let 
\[
L=\Q(\b) \qa K=\Q(\sqrt{5}, \sqrt{21}).
\]
So, $d_1=5$ and $d_2=21$. Moreover, $D_L=3^2\cdot 5^2 \cdot 7^2$. 
\ms

Let $F=\Q(\sqrt{- 105\cdot t})$, where $t$ is a positive squarefree integer prime to $105$. 

\begin{cor}\label{cor:V4 example2}
Let $\rho$ and $c_i=\a_i \cup \e$ be chosen as above. Then,
\begin{align*}
CS_{c_1}([\rho]) = \frac{1}{2} &\Longleftrightarrow  \leg t 5 =-1 \Longleftrightarrow t \equiv \pm 2 \pmod 5.\\
CS_{c_2}([\rho]) = \frac{1}{2} &\Longleftrightarrow  \leg t 3 = - \leg t 7 \Longleftrightarrow 2, 8, 10, 11, 13, 19 \pmod {21}. \\
CS_{c_3}([\rho]) = \frac{1}{2} &\Longleftrightarrow  \leg t 3 \cdot \leg t 5 \cdot \leg t 7 = -1.
\end{align*}
\end{cor}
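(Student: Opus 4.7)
The result is a direct specialization of Proposition \ref{prop:V4 general example} to the explicit pair $(L,K)$ with $(d_1,d_2)=(5,21)$, so the plan is to first confirm that this pair fits the hypotheses of Assumption \ref{assu:fields} (and hence of the proposition), and then to evaluate the three Jacobi-symbol products appearing in Proposition \ref{prop:V4 general example}.

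First I would verify the structural hypotheses. From the LMFDB entry, $L/\Q$ is Galois with group $\cQ_8$, contains $K=\Q(\sqrt{5},\sqrt{21})$ as its unique biquadratic subfield, and has odd discriminant with $D_L=3^{2}\cdot 5^{2}\cdot 7^{2}$. One checks quickly that $d_1=5$ and $d_2=21$ are positive squarefree integers $\equiv 1\pmod 4$ with $(d_1,d_2)=1$, so Proposition \ref{prop:quaternion} applies. Taking $F=\Q(\sqrt{-105\,t})$ with $t$ positive squarefree and $(t,105)=1$ makes $F\cap L=\Q$ (since every quadratic subfield of $L$ lies in the totally real $K$), and the previous discussion gives $F^{\ur}=FK$, $F^{+}=FL$, with the three choices $F^{\a_i}=F_i$ corresponding to the three nontrivial characters $\a_i\in\Hom(V_4,\mu_2)$.

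The proof then reduces to plugging $d_1=5$, $d_2=21$ into the three formulas of Proposition \ref{prop:V4 general example}, which is where the real computation sits. For $CS_{c_1}$ I need
\[
\leg{-21t}{5}\cdot\leg{5}{3}\leg{5}{7}.
\]
Since $-1$ is a QR mod $5$ and $21\equiv 1\pmod 5$, the first factor collapses to $\leg{t}{5}$; a short quadratic reciprocity computation gives $\leg{5}{3}=\leg{2}{3}=-1$ and $\leg{5}{7}=\leg{7}{5}=\leg{2}{5}=-1$, so their product is $+1$, yielding the first claim. For $CS_{c_2}$, the analogous expression is $\leg{21}{5}\cdot\leg{-5t}{3}\leg{-5t}{7}$; using $\leg{21}{5}=1$, $\leg{-1}{3}=\leg{-1}{7}=-1$, and $\leg{5}{3}=\leg{5}{7}=-1$, the sign cancellations leave exactly $\leg{t}{3}\leg{t}{7}$, giving the stated criterion. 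For $CS_{c_3}$, $\leg{-t}{3}\leg{-t}{5}\leg{-t}{7}$ reduces to $\leg{t}{3}\leg{t}{5}\leg{t}{7}$ because $\leg{-1}{3}\leg{-1}{5}\leg{-1}{7}=(-1)(+1)(-1)=1$.

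The remaining task is purely arithmetic: translate the Jacobi-symbol conditions into residue-class conditions. The condition $\leg{t}{5}=-1$ is well-known to be $t\equiv \pm 2\pmod 5$, and the condition $\leg{t}{3}=-\leg{t}{7}$ can be verified modulo $21$ by inspecting the twelve residues coprime to $21$, which exhibits precisely the six listed classes $\{2,8,10,11,13,19\}$. There is no real obstacle here; the only thing one must be careful about is bookkeeping the signs in the quadratic reciprocity steps for $\leg{5}{3}$, $\leg{5}{7}$, $\leg{-1}{3}$, $\leg{-1}{7}$, and $\leg{-1}{5}$, since a single sign error would flip the answer in one of the three cases.
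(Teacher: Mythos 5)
Your proposal is correct and follows the same route the paper implicitly takes: Corollary \ref{cor:V4 example2} is nothing but Proposition \ref{prop:V4 general example} evaluated at $d_1=5$, $d_2=21$, and your Legendre-symbol manipulations (in particular $\leg{5}{3}=\leg{5}{7}=-1$, $\leg{-1}{5}=1$, $\leg{-1}{3}=\leg{-1}{7}=-1$, and $\leg{21}{5}=1$) are all accurate, as is the mod-$21$ enumeration giving $\{2,8,10,11,13,19\}$.
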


\ms

Now, we take $A=V_4$, but $\Gamma=D_4$, the dihedral group of order $8$.
We found $L/K/\Q$ from the LMFDB \cite{lmfdb0} as follows. Let 
\small
\[
g(x)=x^8-3x^7+4x^6-3x^5+3x^4-3x^3+4x^2-3x+1
\]
\normalsize
be an irreducible polynomial over $\Q$, and $\b$ be a root of $g(x)$. Let 
\[
L=\Q(\b) \qa K=\Q(\sqrt{-3}, \sqrt{-7}).
\]
If we take $D=21$, then this choice satisfies Assumption \ref{assu:fields}. Moreover, $d_L=3^6\cdot 7^4$ and $d_K=3^2 \cdot 7^2$. 
\ms

Let $F=\Q(\sqrt{-21 \cdot t})$, where $t$ is a positive squarefree integer prime to $21$. (Then, $F\cap L =\Q$ because all imaginary quadratic subfields of $L$ are $\Q(\sqrt{-3})$ and $\Q(\sqrt{-7})$.)
Since $\Hom(A, \mu_2)$ is of order $4$, there are three quadratic subfield of $FK$ over $F$:
\[
F_1:=F(\sqrt{-3}), ~F_2:=F(\sqrt{-7}), \text{and } F_3:=F(\sqrt{21}).
\]

\begin{prop}\label{prop:V4 example3}
Let $\rho$ and $c_i=\a_i \cup \e$ be chosen so that $F^{\a_i}=F_i$, $F^\ur = FK$ and $F^+=FL$. Then,
\begin{align*}
CS_{c_1}([\rho]) = \frac{1}{2} &\Longleftrightarrow \leg t 3 = -1 \Longleftrightarrow t \equiv 2 \pmod 3.\\
CS_{c_2}([\rho]) = \frac{1}{2} & \h \text{ for all } t.\\
CS_{c_3}([\rho]) = \frac{1}{2} &\Longleftrightarrow \leg t 3 =1 \Longleftrightarrow t \equiv 1 \pmod 3.
\end{align*}
\end{prop}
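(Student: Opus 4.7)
The plan is to apply Theorem \ref{thm:final formula} as a black box. Since that theorem says $CS_{c_i}([\rho]) \equiv s_i/2 \pmod \Z$ where $s_i$ counts the prime divisors of $(D_L,D)$ that are inert in either of the two quadratic fields $\Q_1 = \Q(\sqrt{M})$ or $\Q_2 = \Q(\sqrt{N})$ attached to $F^{\a_i}$ (with $N = (-21t)/M$), my job reduces to (i) pinning down the relevant set of primes, and (ii) for each $i$, reading off when $s_i$ is odd from Legendre symbols.

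First I would compute $(D_L,D)$. Here $d_L=3^6\cdot 7^4$ and $d_K=3^2\cdot 7^2$, so $D_L = d_L/d_K^2 = 3^2$, whereas $D=21$. Hence $(D_L,D)=3$, and the only prime contributing to the sum is $p=3$. Thus $s_i \in \{0,1\}$ and I just need to decide, in each case, whether $3$ is inert in $\Q_1$ or in $\Q_2$.

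Next, a short case analysis. For $F^{\a_1} = F(\sqrt{-3})$ we have $M=-3$ and $N = 7t$, so $\Q_1 = \Q(\sqrt{-3})$ (ramified at $3$) and $\Q_2 = \Q(\sqrt{7t})$; since $7\equiv 1\pmod 3$, the prime $3$ is inert in $\Q_2$ iff $\leg{t}{3} = -1$, i.e.\ $t\equiv 2\pmod 3$. For $F^{\a_2} = F(\sqrt{-7})$ we have $M=-7$ and $N=3t$, so $\Q_1 = \Q(\sqrt{-7})$ and $\Q_2 = \Q(\sqrt{3t})$; using $\leg{-7}{3}=\leg{2}{3}=-1$ shows $3$ is inert in $\Q_1$ regardless of $t$, giving $CS_{c_2}([\rho])=\tfrac{1}{2}$ for all $t$. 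For $F^{\a_3} = F(\sqrt{21})$ we have $M=21$ and $N=-t$, so $\Q_1 = \Q(\sqrt{21})$ (ramified at $3$) and $\Q_2 = \Q(\sqrt{-t})$; since $\leg{-1}{3}=-1$, the prime $3$ is inert in $\Q_2$ iff $\leg{t}{3}=1$, i.e.\ $t\equiv 1 \pmod 3$. Assembling the three cases gives exactly the three equivalences in the proposition.

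The only potentially subtle point is checking that the hypotheses of Theorem \ref{thm:final formula}, and before it Assumption \ref{assumptions}, really are satisfied for this particular $L/K/\Q$; this is the step I would confirm carefully. Concretely one must verify that $L/\Q$ is Galois with group $D_4$ containing $K=\Q(\sqrt{-3},\sqrt{-7})$, that $d_L = 3^6\cdot 7^4$ is odd, that $K/\Q(\sqrt{-3}\cdot\sqrt{-7}) = K/\Q(\sqrt{21})$ is unramified at finite primes, and that the chosen $t$ (prime to $21$) ensures $F\cap L = \Q$ and hence $FL/F$ realises $D_4$. Once these are in place, everything else is a mechanical application of the general formula, and no new computation beyond the three Legendre-symbol evaluations above is required.
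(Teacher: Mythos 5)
Your proof is correct and takes essentially the same route as the paper: the paper's proof of Proposition \ref{prop:V4 example3} is just the one-line observation ``Since $D_L=3^2$, the result follows from Theorem \ref{thm:final formula},'' and your write-up simply fills in the Legendre-symbol bookkeeping for $p=3$ in the three cases, which is exactly what that one line delegates to the reader.
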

\begin{proof}
Since $D_L=3^2$, the result follows from Theorem \ref{thm:final formula}.
\end{proof}

\subsection{Case 3: non-abelian group}
Let $A=S_4$, the symmetric group of degree $4$. Then, $H^1(A, \mu_2) \simeq \zmod 2$ and
$H^2(A, \zmod 2) \simeq \zmod 2 \times \zmod 2$. Thus, there is a unique surjective map $\a : A \surj \mu_2$ and three non-trivial central extensions $\Gamma_i$ of $A$ by $\zmod 2$:
\begin{itemize}
\item
$\Gamma_1=2^+S_4 \simeq \GL(2, \F_3)$, the general linear group of degree 2 over $\F_3$.
\item
$\Gamma_2=2^-S_4$, the transitive group `$16T65$' in \cite{paderborn}.
\item
$\Gamma_3=2^{\text{det}} S_4$, corresponding to the cup product of the signature with itself.
\end{itemize}
Let $\e_i$ be a cocycle representing the extension
\[
\xymatrix{
0 \ar[r] & \zmod 2 \ar[r] & \Gamma_i \ar[r] & A=S_4 \ar[r] & 0.
}
\]
\ms

In this subsection, we will consider the first two cases.
There are another descriptions of the groups $\Gamma_1$ and $\Gamma_2$. Let 
\[
\xymatrix{
\cE : 1 \ar[r] & \SL(2, \F_3) \ar[r] & \Gamma \ar[r] & \F_3^{\times} \simeq \zmod 2 \ar[r] & 0.
}
\]
If $\cE$ splits, then $\Gamma \simeq \Gamma_1$, otherwise $\Gamma \simeq \Gamma_2$.

\ms

Let $c=\a \cup \e_1$. (So, $\Gamma=\Gamma_1$.) Suppose $\Q \subset \Q(\sqrt{D}) \subset K \subset L$ is a tower of fields satisfying Assumption \ref{assu:fields}. Let $F=\Q(\sqrt{-\abs D \cdot t})$, where $t$ is a squarefree integer prime to $D$ and greater than $1$. Then, $F \cap L= F \cap \Q(\sqrt{D})=\Q$. (The first equality holds because $\Gamma$ has a unique subgroup of order $24$.)
\begin{prop}
Let $\rho$ and $c$ be chosen so that $F^\a=F(\sqrt{D})$, $F^\ur = FK$ and $F^+=FL$. Then, 
\[
CS_c([\rho]) = 0.
\]
\end{prop}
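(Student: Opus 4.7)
The plan is to invoke Theorem \ref{thm:final formula}, which gives $CS_c([\rho]) \equiv s/2 \pmod{\Z}$, where $s$ is the number of primes $p \mid (D_L, D)$ that are inert in $\Q_1 = \Q(\sqrt{D})$ or $\Q_2 = \Q(\sqrt{-t})$. Every $p \mid D$ is ramified (hence not inert) in $\Q(\sqrt{D})$, so the proposition is reduced to showing $(D_L, D) = 1$; then $s = 0$ independently of $t$ and we are done.

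To establish $(D_L, D) = 1$, I would fix an odd prime $p \mid D$ (odd because $d_L$ is odd, by Assumption \ref{assu:fields}(2)) and examine the inertia subgroup $I \subset G := \GL_2(\F_3) = \Gal(L/\Q)$ at a prime of $L$ above $p$. Tameness (since $p$ is odd) makes $I$ cyclic. Two constraints pin down $I$: writing $H := \SL_2(\F_3) = \ker(\det)$ and $Z := \{\pm I\}$, the fact that $M = L^H = \Q(\sqrt{D})$ ramifies at $p$ with $e = 2$ forces $I \not\subset H$, while the fact that $K/M$ is unramified everywhere (Assumption \ref{assu:fields}(5)) gives $I \cap H \subset Z$. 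Combining these with $|I| > 1$ (since $L/\Q$ ramifies at $p$), we get $|I| \in \{2, 4\}$.

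The step I expect to be the main obstacle, and where the specific group $\Gamma_1 = \GL_2(\F_3)$ really matters, is ruling out $|I| = 4$. A generator $g$ of such an $I$ would satisfy $\det g = -1$ and $g^2 = -I$ (the unique nontrivial element of $Z$). Cayley--Hamilton gives $g^2 = (\tr g) g - (\det g) I = (\tr g) g + I$ in $\GL_2(\F_3)$; setting $g^2 = -I$ forces $(\tr g)\, g = -2I = I$, so $g$ must be a scalar $cI$. But then $c^2 = -1$ in $\F_3^\times$, which is impossible since $-1$ is not a square mod $3$. Hence $|I| = 2$, and writing $I = \langle g \rangle$ with $g \notin H \supset Z$ yields $I \cap Z = \{I\}$. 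Since $\Gal(L/K) = Z$, this says $L/K$ is unramified at primes of $K$ above $p$, so $p \nmid D_L$. As this holds for every $p \mid D$, we obtain $(D_L, D) = 1$ and therefore $CS_c([\rho]) = 0$. The argument singles out $\Gamma_1$ among the three central extensions precisely through the non-existence of an element of order $4$ with determinant $-1$ squaring to $-I$; one expects the analogous Cayley--Hamilton obstruction to fail for $\Gamma_2$ or $\Gamma_3$, consistent with the fact that $CS_{c_2}$ and $CS_{c_3}$ need not vanish.
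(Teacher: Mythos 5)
Your proof is correct, and it follows the same overall strategy as the paper: reduce via Theorem~\ref{thm:final formula} to showing $(D_L,D)=1$, then analyze the inertia group at an odd prime $p\mid D$ using tameness. The difference is in how the key group-theoretic fact is established. The paper's proof fixes $p\mid(D_L,D)$, deduces from the ramification data that $|I_p|=4$, asserts $I_p\simeq\zmod 2\times\zmod 2$ by invoking the splitting of $1\to\SL_2(\F_3)\to\GL_2(\F_3)\to\zmod2\to1$, and derives a contradiction with tameness. As written, the jump from ``the extension splits'' to ``$I_p\simeq V_4$'' is abbreviated: a priori a central extension $1\to Z\to I_p\to\zmod2\to1$ could be $\zmod4$, and ruling this out requires knowing that the generator of $I_p/Z$ (which is a transposition in $S_4=\PGL_2(\F_3)$, being an odd involution) lifts to an involution in $2^+S_4$. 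Your Cayley--Hamilton computation --- showing that no $g\in\GL_2(\F_3)$ satisfies both $\det g=-1$ and $g^2=-I$ --- is a clean and fully explicit way to close exactly this gap, and it makes transparent why the argument is special to $\Gamma_1=2^+S_4$ and would fail for $\Gamma_2$. A minor structural variation is that you avoid assuming $p\mid D_L$ and instead prove directly that every $p\mid D$ has $|I|=2$ with $I\cap Z=\{1\}$, hence is unramified in $L/K$; this is logically equivalent to the paper's contradiction argument but slightly more direct.
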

\begin{proof}
Since the extension 
\[
\xymatrix{
\cE : 1 \ar[r] & \SL(2, \F_3) \ar[r] & \GL(2, \F_3) \ar[r] & \F_3^{\times} \simeq \zmod 2 \ar[r] & 0
}
\]
splits, $\Gal(L/\Q) \simeq \Gal(L/{\Q(\sqrt{D})}) \rtimes \Gal(\Q(\sqrt{D})/\Q)$.

Let $p$ be a prime divisor of $(D_L, D)$. By our assumption, $p$ is odd. Let $I_p$ be an inertia subgroup of $\Gal(L/\Q)\simeq \Gamma=\GL(2, \F_3)$. Since $L/K$ and $\Q(\sqrt{D})/\Q$ are ramified at $p$ but $K/{\Q(\sqrt{D})}$ is not, the ramification index of $p$ in $L/\Q$ is $4$, and $I_p \simeq \zmod 2 \times \zmod 2$.

On the other hand, since $p$ is odd, $L/\Q$ is tamely ramified at $p$ and hence $I_p$ must be cyclic, which is a contradiction. Therefore $(D_L, D)=1$ and hence the result follows by Theorem \ref{thm:final formula}.
\end{proof}
\ms

We can find several examples of such towers from the LMFDB.
Let 
\begin{align*}
g_1(x) &=x^8 -4x^7 +7x^6 +7x^5 -51x^4 +50x^3 +61x^2 -107x-83\\
g_2(x) &=x^4- x -1
\end{align*}
be irreducible polynomials over $\Q$ (\cite{lmfdb2, lmfdb3}), and let $L$ (resp. $K$) be the the splitting field of $g_1(x)$ (resp. $g_2(x)$).
Then, $\Gal(L/\Q) \simeq \GL(2, \F_3)$ and $\Gal(K/\Q)\simeq S_4$. Moreover, $d_L=3^{24}\cdot 283^{24}$ and $d_K=283^{12}$. Thus, $D=-283$ satisfies Assumption \ref{assu:fields}. Note that since the discriminant $D$ of $g_2(x)$ is squarefree, $K/{\Q(\sqrt{D})}$ is unramified everywhere (cf. \cite[p. 1]{kedlaya}).
\ms

Let $F=\Q(\sqrt{-283\cdot t})$, where $t$ is a squarefree integer prime to $283$, and $t>1$. 
\begin{cor}
Let $\rho$ and $c$ be chosen so that $F^{\a}=F(\sqrt{-283})$, $F^\ur = FK$ and $F^+=FL$. Then,
\[
CS_c([\rho]) = 0.
\]
\end{cor}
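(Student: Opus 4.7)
The plan is to recognize this corollary as a direct numerical instance of the preceding proposition, applied to the tower $\Q \subset \Q(\sqrt{-283}) \subset K \subset L$ with $D = -283$. Accordingly the task reduces to verifying the hypotheses of Assumption \ref{assu:fields} for this specific choice of $L$, $K$, and $D$; once that is done, the conclusion $CS_c([\rho]) = 0$ is immediate.

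First I would check the Galois-theoretic conditions: $\Gal(L/\Q) \simeq \GL(2,\F_3) = \Gamma_1$ and $\Gal(K/\Q) \simeq S_4 = A$ are supplied by the choice of $g_1$ and $g_2$, and the discriminants $d_L = 3^{24} \cdot 283^{24}$ and $d_K = 283^{12}$ are both odd. For condition (4), $\Q(\sqrt{-283}) \subset K$ because $-283$ is the discriminant of $g_2(x)$ and the splitting field of a quartic contains $\Q(\sqrt{\mathrm{disc}})$. The least obvious point, which I would cite from \cite{kedlaya}, is that whenever a quartic polynomial has squarefree discriminant $D$, its $S_4$ splitting field is unramified over $\Q(\sqrt{D})$ at all finite primes; applied here this gives condition (5) of Assumption \ref{assu:fields}. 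With the assumption verified, the previous proposition yields $CS_c([\rho]) = 0$.

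As a sanity check I would also rerun the arithmetic underlying the proposition. We have $D_L = d_L/d_K^2 = (3^{24} \cdot 283^{24})/283^{24} = 3^{24}$, so $(D_L, D) = (3^{24}, -283) = 1$. By Theorem \ref{thm:final formula}, $CS_c([\rho]) \equiv s/2 \pmod{\Z}$, where $s$ counts the primes dividing $(D_L, D)$ that are inert in $\Q_1$ or $\Q_2$; since $(D_L, D) = 1$, $s = 0$, and the conclusion follows. The only step that is not formal bookkeeping is the unramifiedness of $K/\Q(\sqrt{D})$, which is the reason the paper invokes \cite{kedlaya} at precisely that point; the rest of the argument is a coprimality check guaranteed by the tame-ramification obstruction already exploited in the preceding proposition.
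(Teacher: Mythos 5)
Your proof is correct and follows exactly the route the paper intends: the corollary is just the preceding proposition applied to the specific tower given by $g_1, g_2$, so the work is to verify Assumption \ref{assu:fields} (discriminants odd, the right Galois groups, $\Q(\sqrt{-283}) \subset K$, and unramifiedness of $K/\Q(\sqrt{-283})$ via Kedlaya's squarefree-discriminant criterion) and then conclude via Theorem \ref{thm:final formula}. Your sanity-check computation $D_L = d_L/d_K^2 = 3^{24}$ and $(D_L, D) = 1$ is a nice concrete confirmation of the tame-ramification obstruction that the proposition proves in general; the paper relies on the proposition alone, but the explicit coprimality check you add is redundant rather than different in approach.
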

\vv

Now, we consider another case. Let $c=\a \cup \e_2$. (So, $\Gamma=\Gamma_2$.) Let $L$ be the splitting field of 
\scriptsize
\begin{align*}
&f(x)=x^{16}+5x^{15} -790x^{14}-4654x^{13}+234254x^{12}+1612152x^{11}-33235504x^{10}\\
&-263221982x^9+2331584048x^8+21321377994x^7-74566280958x^6-825209618478x^5\\
&+922238608476x^4+13790070608536x^3-6704968288135x^2-80794234036917x+87192014930816.
\end{align*}
\normalsize
Let $K$ be the splitting field of 
\[
g(x)=x^4 - x^3 - 4x^2 + x +2.
\]
Then, $\Gal(L/\Q) \simeq \Gamma=\Gamma_2$ and $\Gal(K/\Q)\simeq S_4 =A$.\footnote{This example is provided us by Dr. Kwang-Seob Kim.} (See \cite{paderborn, paderborn2}.) 
\begin{lem}
We have the following.
\begin{enumerate}
\item
$K/{\Q(\sqrt{2777})}$ is unramified everywhere.
\item
$\Q(\sqrt{2777})$ is a unique quadratic subfield of $L$.
\item
$\Q(\sqrt{2777}) \subset K\subset L$.
\item
$D_L$ is a multiple of $2777$, i.e., $L/K$ is ramified at the primes above $2777$.
\end{enumerate}
\end{lem}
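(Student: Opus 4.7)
My plan is to verify the four assertions in turn; (1)--(3) reduce to polynomial and group-theoretic calculations, while (4) is the substantive part and requires a structural distinction between $\Gamma_2 = 2^- S_4$ and $\Gamma_1 = 2^+ S_4 \simeq \GL(2,\F_3)$. Throughout, let $Z \subset \Gamma_2$ denote the central $\Z/2\Z$.

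For (1), I would compute the polynomial discriminant $\mathrm{disc}(g)$, expecting it to equal $2777$ (a prime). Because $2777$ is squarefree, the theorem of Kedlaya cited earlier in the paper gives that $K/\Q(\sqrt{2777})$ is unramified at every finite prime; a signature check handles the archimedean places. As a bonus, this also yields $\Q(\sqrt{2777}) \subset K$ via the standard identification of the quadratic resolvent. For (2), I would check that $\Gamma_2^{\mathrm{ab}} \simeq \Z/2\Z$, i.e.\ that $Z$ lies in $[\Gamma_2,\Gamma_2]$; this is a general fact about the two double covers of $S_4$ and can be read off from the explicit cocycle $\e_2$ or by direct inspection of $\Gamma_2$. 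Hence $L$ has a unique quadratic subfield, which by (3) must coincide with $\Q(\sqrt{2777})$. For (3), let $M \subset L$ be the fixed field of $Z$, so $\Gal(M/\Q) \simeq S_4$. I would identify $M$ with $K$ by comparing Frobenius cycle types at a small set of unramified primes: the factorization of $f \bmod p$ determines the conjugacy class of $\mathrm{Frob}_p$ in $\Gamma_2$, hence in $S_4 = \Gamma_2/Z$, which must match the cycle type read from $g \bmod p$ in $\Gal(K/\Q) \simeq S_4$. A short list of primes suffices since $S_4$-extensions of $\Q$ of small discriminant are rare.

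The main obstacle is (4). I would argue by contradiction: suppose $L/K$ were unramified at every prime above $2777$, and let $P$ be such a prime of $L$ with inertia $I_P \subset \Gamma_2$. Since $\Q(\sqrt{2777})/\Q$ is tamely ramified with $e=2$, $K/\Q(\sqrt{2777})$ is unramified by (1), and $L/K$ is unramified by hypothesis, we obtain $|I_P|=2$; moreover $I_P \cap \Gal(L/K) = I_P \cap Z = 1$, so $I_P$ injects into $\Gamma_2/Z = S_4$. The image is a $\Z/2\Z$ surjecting onto $\Gal(\Q(\sqrt{2777})/\Q)$ under the sign map, so its generator is an odd involution, i.e.\ a transposition. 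The crucial group-theoretic input is the property that distinguishes the two double covers of $S_4$: in $\Gamma_2 = 2^- S_4$ every preimage of a transposition has order $4$, whereas in $\Gamma_1 = 2^+ S_4 \simeq \GL(2,\F_3)$ transpositions lift to reflections of order $2$. Hence no order-$2$ element of $\Gamma_2$ projects to a transposition, contradicting the existence of $I_P$. It follows that $L/K$ is ramified at the primes above $2777$, so $2777 \mid D_L$.

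The most delicate point is the group-theoretic fact about lifts of transpositions in $2^- S_4$ invoked in (4). I would verify it either directly from the cocycle $\e_2$, or by realizing $2^- S_4$ inside the Schur double cover $\widetilde{S_4}$ and computing the square of a chosen preimage of the transposition $(1\,2)$. A consistency check is also needed to confirm that the LMFDB polynomial $f$ really has Galois group $2^- S_4$ and not $2^+ S_4$; this is presumably recorded in the cited LMFDB entry, but can be corroborated independently by inspecting $\mathrm{Frob}_p$ shapes at a few small primes.
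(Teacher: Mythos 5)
Your treatments of (1), (2), and (4) are sound, but your plan for (3) has a genuine gap. Matching Frobenius cycle types of $f \bmod p$ and $g \bmod p$ at a finite set of unramified primes cannot, by itself, establish the equality $M=K$: two distinct Galois extensions can agree on any prescribed finite list of splitting behaviours, and the clause ``a short list of primes suffices since $S_4$-extensions of small discriminant are rare'' is a heuristic, not an argument. What the paper actually does is structural. Having shown that both $K/E$ and $M/E$ (where $E=\Q(\sqrt{2777})$ and $M=L^Z$) are everywhere-unramified $A_4$-extensions, it sets $N=K\cap M$ and observes that $\Gal(N/E)$ is a common quotient of $A_4$, hence trivial, $\Z/3\Z$, or $A_4$. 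It then rules out the first two cases using the class group of $E$ (which is $\Z/3\Z$) and the class group of the Hilbert class field $H$ of $E$ (which is $V_4$): in the first case $E$ would have two distinct unramified cubic extensions $K^{V_4}\neq M^{V_4}$, and in the second case $H$ would have two distinct unramified $V_4$-extensions $K\neq M$, each contradicting the size of the relevant class group. This forces $K=N=M$. You would need an argument of this kind, or some other finite certificate of equality, to close your (3).

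On part (4) your route genuinely differs from the paper's and is correct given (3). The paper reads off the mod-$2777$ factorisation of $f$ to get $e_p=4$ in $L/\Q$, invokes tameness to conclude $I_p$ is cyclic of order $4$, and then uses uniqueness of the central $\Z/2\Z$ to see $I_p\supset Z$. You instead argue by contradiction: if $L/K$ were unramified above $p$, then $|I_P|=2$, $I_P\cap Z=1$, and $I_P$ would inject into $S_4$ with image an odd involution, i.e.\ a transposition; but in $2^-S_4$ every preimage of a transposition has order $4$, so no such $I_P$ exists. This is a clean conceptual proof that isolates the $2^+$ vs.\ $2^-$ distinction as the mechanism, at the cost of needing to verify that lifting property for $\Gamma_2$ (which you correctly flag). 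The paper's computation also implicitly confirms that the LMFDB field has type $2^-$ rather than $2^+$, since an inertia subgroup of order $4$ covering a transposition is possible only in the $2^-$ cover; your argument uses that fact as input rather than deducing it.
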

\begin{proof}
For simplicity, let $E:=\Q(\sqrt{2777})$ and $p=2777$. 
\begin{enumerate}
\item
Since $S_4$ has a unique subgroup of order $12$, $K$ has a unique quadratic subfield $K'$. Since the discriminant of $g(x)$ is $p$, a prime, $K'=E$ and $K/E$ is unramified everywhere (cf. \cite[p. 1]{kedlaya}).

\item
Let $\b_i$ be the roots of $f(x)$. Then, $L=\cup~\Q(\b_i)$.
Since the discriminant of the field $\Q[x]/{(f(x))}$ is $p^{12}$, $\Q(\b_i)$ contains $E$, and so does $L$. On the other hand, since $\Gamma$ has also a unique subgroup of order $24$, $E$ is a unique quadratic subfield of $L$. 

\item\label{3}
Since
\[
f(x)\equiv (x+1372)^4 \cdot (x+1791)^4 \cdot (x+1822)^4 \cdot (x+2653)^4 \pmod {p},
\]
the ramification index of $p$ in $\Q(\b_i)/\Q$ is $4$. Since $L=\cup ~\Q(\b_i)$ and $p$ is odd, the ramification index of $p$ in $L/\Q$ is $4$ by Abhyankar's lemma. Since $L/\Q$ is tamely ramified at $p$, the inertia subgroup $I_p$ of $\Gal(L/\Q)\simeq \Gamma$ is cyclic of order $4$. Since $\Gamma$ has a unique subgroup $C$ of order $2$, $I_p$ contains $C$. Thus, $L/M$ is ramified at the primes above $p$, where $M$ is the fixed field of $C$ in $L$. Since $E/\Q$ is also ramified at $p$, $M/E$ is unramified at the primes above $p$, and hence $M/E$ is unramified everywhere. 
\[
\xyv{0.7} \xyh{0.5}
\xymatrix{
 & L \ar@{-}[d] \ar@{-}@/^1pc/[d]^-{\text{ramified only at the primes above } p}\\
\text{unique $S_4$-subextension} &M \ar@{-}[dd] \ar@{-}[l] \ar@{-}@/^1pc/[dd]^-{\text{unramified everywhere $A_4$-extension}}\\
\\
\text{ unique quadratic subextension} &E \ar@{-}[d] \ar@{-}[l] \ar@{-}@/^1pc/[d]^-{\text{ramified only at } p}\\
& \Q
}
\]

\noindent
Now, it suffices to show that $K=M$. Let $N=K\cap M$. Then, since $K$ and $M$ are Galois over $E$, so is $N$. Also since the normal subgroups of $\Gal(K/E)\simeq A_4 \simeq \Gal(M/E)$ are either $\{1\}, V_4$ or $A_4$, 
\[
\Gal(N/E)\simeq \text{ either } \{1 \}, \zmod 3 \text{ or } A_4.
\]
Note that the class group of $E$ is $\zmod 3$. Let $H$ be the Hilbert class field of $E$. Then, the class group of $H$ is $V_4$. 
(This can easily be checked because the degree of $H/\Q$ is small.)
If $\Gal(N/E) \simeq \{1\}$, then $E$ has two different degree $3$ unramified extensions given by $K^{V_4}$ and $M^{V_4}$, which is a contradiction. If $\Gal(N/E) \simeq \zmod 3$, then $N=H$ and $N$ has two different unramified 
$V_4$ extensions $K$ and $M$, which is a contradiction. Thus, $\Gal(N/E)\simeq A_4$ and hence $K=N=M$, as desired.
\item
This is proved in (\ref{3}).
\end{enumerate}
\end{proof}
\ms

Thus, we can take $D=2777$.
Let $F=\Q(\sqrt{-2777\cdot t})$ for a positive squarefree integer $t$ prime to $2777$. Then, $F\cap L=\Q$ because $L$ has a unique quadratic subfield $\Q(\sqrt{2777})$, which is real. 
\begin{prop} \label{nonabelian}
Let $\rho$ and $c$ be chosen so that $F^{\a}=F(\sqrt{D})$, $F^\ur = FK$ and $F^+=FL$. Then, 
\[
CS_c([\rho]) = \frac{1}{2} \Longleftrightarrow \leg {-t}{2777}=\leg {t}{2777} =-1.
\]
\end{prop}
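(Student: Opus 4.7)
The plan is a direct application of Theorem \ref{thm:final formula}, with essentially all the preparation already carried out in the lemma that precedes the proposition. In the notation of Assumption \ref{assu:fields} we have $D=2777$ (a prime), and the choice $F^{\a}=F(\sqrt{D})$ corresponds to $M=D$, so that $N=-|D|t/M=-t$. Consequently the two relevant quadratic fields are $\Q_1=\Q(\sqrt{2777})$ and $\Q_2=\Q(\sqrt{-t})$, and Theorem \ref{thm:final formula} asserts $CS_c([\rho])\equiv s/2\pmod{\Z}$, where $s$ counts the prime divisors of $(D_L,D)$ that are inert either in $\Q_1$ or in $\Q_2$.

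The next step is to determine $(D_L,D)$. Since $D=2777$ is prime, the gcd is either $1$ or $2777$, and part (4) of the preceding lemma gives $2777\mid D_L$. Hence $(D_L,D)=2777$ and the only prime to analyse is $p=2777$ itself. Now $2777$ ramifies in $\Q_1=\Q(\sqrt{2777})$, so it cannot be inert there; it therefore contributes to $s$ if and only if it is inert in $\Q_2=\Q(\sqrt{-t})$. Since $t$ is a positive squarefree integer coprime to $2777$, this is a sensible condition, and it holds exactly when $\leg{-t}{2777}=-1$. Thus $s=1$ iff $\leg{-t}{2777}=-1$, and $s=0$ otherwise, giving $CS_c([\rho])=\tfrac12$ iff $\leg{-t}{2777}=-1$.

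Finally, the symmetric-looking form in the statement comes from the congruence $2777\equiv 1\pmod 4$: the first supplement of quadratic reciprocity gives $\leg{-1}{2777}=1$, hence $\leg{-t}{2777}=\leg{t}{2777}$, and the single condition $\leg{-t}{2777}=-1$ can be rewritten as $\leg{-t}{2777}=\leg{t}{2777}=-1$. There is essentially no obstacle in this final proposition itself; the one nontrivial input—the verification that $2777$ ramifies in $L/K$, so that it appears in $D_L$—was the key technical point and was already established in the preceding lemma using the factorisation of $f(x)\bmod 2777$, Abhyankar's lemma, and tame ramification.
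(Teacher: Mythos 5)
Your proposal is correct and follows the paper's own argument: both invoke Theorem \ref{thm:final formula} with $(D_L,D)=2777$, note that $2777$ ramifies in $\Q_1=\Q(\sqrt{2777})$ so only its inertness in $\Q_2=\Q(\sqrt{-t})$ matters, and use $2777\equiv 1\pmod 4$ to rewrite $\leg{-t}{2777}$ as $\leg{t}{2777}$. The paper compresses this to a single sentence, but you have simply unpacked the same steps.
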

\begin{proof}
Since $(D_L, D)=2777$ and $F^\a=F(\sqrt{D})=F(\sqrt{-t})$, the result follows from Theorem \ref{thm:final formula}.
\end{proof} 

\begin{rem}
Even in the non-abelian case, we have infinite family of non-vanishing arithmetic Chern-Simons invariants!
\end{rem}

\section{Application}\label{sec: application}
In this section, we give a simple arithmetic application of our computation. Namely, we show non-solvability of a certain case of the embedding problem based on our examples of non-vanishing arithmetic Chern-Simons invariants. 

\ms

For an odd prime $p$, let $p^*=(-1)^{\frac{p-1}{2}} p$. Let 
\[
d_1 = \prod_{i=1}^s p_i^* \qa d_2 =\prod_{j=1}^t q_j^*,
\]
where $p_i, q_j$ are distinct odd prime numbers, and $d_1, d_2 >0$.
Let 
\[
A_i:=\leg {d_2}{p_i}=\prod_{1 \leq j \leq t} \leg {q_j}{p_i} \qa B_j:=\leg {d_1}{q_j}=\prod_{1 \leq i \leq s} \leg {p_i}{q_j}.
\]
Let 
\[
\Delta(d_1, d_2):=\prod_{1 \leq i \leq s} A_i \qa \Delta(d_2, d_1):=\prod_{1 \leq j \leq t} B_j.
\]
\begin{lem}
$\Delta(d_1, d_2) =\Delta(d_2, d_1)$.
\end{lem}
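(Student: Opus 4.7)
The plan is to reduce the claim to quadratic reciprocity. Observe that
\[
\Delta(d_1, d_2)\cdot \Delta(d_2,d_1) = \prod_{i=1}^{s}\prod_{j=1}^{t}\leg{q_j}{p_i}\leg{p_i}{q_j},
\]
and by the quadratic reciprocity law each factor equals $(-1)^{\frac{p_i-1}{2}\cdot\frac{q_j-1}{2}}$. Separating the exponent, the double product becomes
\[
(-1)^{S\cdot T}, \qquad S:=\sum_{i=1}^{s}\tfrac{p_i-1}{2},\qquad T:=\sum_{j=1}^{t}\tfrac{q_j-1}{2}.
\]

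Next I would bring in the sign hypotheses $d_1,d_2>0$. Since $p_i^{*}=(-1)^{(p_i-1)/2}p_i$, the sign of $d_1=\prod_i p_i^{*}$ equals $(-1)^{S}$, so $d_1>0$ forces $S$ to be even; the analogous argument with $d_2$ forces $T$ to be even. Hence $ST$ is even and
\[
\Delta(d_1,d_2)\cdot\Delta(d_2,d_1)=(-1)^{ST}=1.
\]
Since each $\Delta$ is a product of Legendre symbols and thus equals $\pm 1$, the identity $\Delta(d_1,d_2)\cdot\Delta(d_2,d_1)=1$ is equivalent to $\Delta(d_1,d_2)=\Delta(d_2,d_1)$, which is what we wanted.

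I don't foresee any real obstacle: the entire argument is pairing up the $s\cdot t$ reciprocity relations and then using positivity to kill the global sign. The only thing to be mildly careful about is to remember that the hypothesis $d_1,d_2>0$ is doing genuine work here — without it the identity would fail up to an overall sign $(-1)^{ST}$.
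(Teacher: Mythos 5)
Your proof is correct and essentially the same as the paper's: both reduce to quadratic reciprocity and use positivity of $d_1, d_2$ (via the factorisation into prime discriminants $p_i^*$) to kill the resulting sign. The paper shows each $A_i = \prod_j \leg{q_j}{p_i} = \prod_j \leg{p_i}{q_j}$ directly, while you multiply $\Delta(d_1,d_2)\cdot\Delta(d_2,d_1)$ and show the total sign $(-1)^{ST}$ is $+1$; this is a cosmetic difference in bookkeeping, not a different route.
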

\begin{proof}
Note that 
$(-1)^{\Delta(d_1, d_2)} = \prod_{\substack{1 \leq i \leq s \\ 1 \leq j \leq t}} \leg {p_i}{q_j}$.
Since $d_1$ is positive, the number of prime divisors of $d_1$, which are congruent to $3$ modulo $4$ is even. And the same is true for $d_2$. Thus by the quadratic reciprocity law,
\[
A_i = \prod_{1 \leq j \leq t} \leg {q_j}{p_i}=\prod_{1 \leq j \leq t} \leg {p_i}{q_j}.
\]
By taking product for all $1\leq i \leq s$, we get the result.
\end{proof}

\begin{prop}\label{prop:application}
Let $K=\Q(\sqrt{d_1}, \sqrt{d_2})$. If $\Delta(d_1, d_2)=-1$, then
there cannot exist a number field $L$ with odd discriminant, such that $\Gal(L/\Q) \simeq \cQ$ and $K \subset L$.
\end{prop}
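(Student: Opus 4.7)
I argue by contradiction, reducing to the classical Hilbert-symbol obstruction to the quaternion embedding problem. Assume such $L$ exists; then the $\cQ$-embedding problem for $K/\Q$ is solvable, so the obstruction class $\rho_\Q^*(\e)\in H^2(G_\Q,\Z/2\Z)\simeq \operatorname{Br}(\Q)[2]$ vanishes, where $\rho_\Q\colon G_\Q\to V_4=\operatorname{Gal}(K/\Q)$ is the representation attached to $K$ and $\e\in H^2(V_4,\F_2)$ is the class of the central extension $\cQ\to V_4$.

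A direct cocycle calculation using the quaternion relations $i^2=j^2=k^2=ijk=-1$ identifies $\e$ with $x^2+xy+y^2$ in $H^*(V_4,\F_2)=\F_2[x,y]$, where $x,y$ are the characters dual to a generating pair of $V_4$. Via Kummer theory (with $\chi_{d_i}\in H^1(G_\Q,\F_2)$ corresponding to $d_i$) and the Hilbert-symbol identity $(a,a)=(a,-1)$, one obtains
\[
\rho_\Q^*(\e)=(d_1,-1)+(d_2,-1)+(d_1,d_2)\in\operatorname{Br}(\Q)[2].
\]

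Given $d_i\equiv 1\pmod 4$ and $d_i>0$ (from Proposition \ref{prop:quaternion}), the local invariants of this class vanish at $\infty$ and at $p=2$ by standard Hilbert-symbol formulas. At odd $p\mid d_1$ with $p\nmid d_2$, the local invariant simplifies to $\leg{-d_2}{p}$; at odd $p\mid d_2$ with $p\nmid d_1$, to $\leg{-d_1}{p}$. The hypothesis $\Delta(d_1,d_2)=-1$, combined with
\[
\prod_{p\mid d_1}\leg{-d_2}{p}=\left(\prod_{p\mid d_1}\leg{-1}{p}\right)\left(\prod_{p\mid d_1}\leg{d_2}{p}\right)=1\cdot\Delta(d_1,d_2)=-1,
\]
(where the first factor is $1$ because $d_1>0$ is a product of integers $p^*\equiv 1\pmod 4$), shows that an odd number of local invariants at primes $p\mid d_1$ are nontrivial. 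Hence $\rho_\Q^*(\e)\neq 0$ in $\operatorname{Br}(\Q)[2]$, contradicting the assumed solvability.

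The main obstacle is the identification $\e=x^2+xy+y^2$ in $H^2(V_4,\F_2)$, which is verified by a direct cocycle computation using a normalized section of $\cQ\to V_4$ (sending the classes of $\bar i,\bar j,\bar k$ to $i,j,k$) and comparison with standard cup products. With that identification in hand the remainder of the argument is a routine Hilbert-symbol calculation; note that the odd-discriminant hypothesis on $L$ is not actually used — the proof rules out \emph{all} $\cQ$-extensions of $\Q$ containing $K$ whenever $\Delta(d_1,d_2)=-1$, which in the framework of Sections \ref{sec:classical case}--\ref{sec: towards computation} is the statement that the arithmetic Chern--Simons obstruction (restricted to its image in the global Brauer group) already detects the failure of solvability.
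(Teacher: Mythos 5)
Your argument is correct, but it takes a genuinely different route from the one in the paper. You compute the classical central-embedding-problem obstruction $\rho_\Q^*(\e) = (d_1,-1) + (d_2,-1) + (d_1,d_2) \in \operatorname{Br}(\Q)[2]$, via the identification $\e = x^2 + xy + y^2 \in H^2(V_4,\F_2)$, and show by local Hilbert-symbol invariants that $\Delta(d_1,d_2) = -1$ forces this class to be nonzero. This is precisely the Witt--Fr\"ohlich criterion that the paper cites in the remark immediately following the proposition, where the authors themselves note that ``this proposition should be viewed as a new perspective rather than a new result.'' The paper's own proof deliberately avoids this: assuming $L$ exists, it uses Dirichlet's theorem to select a prime $\ell$, sets $F = \Q(\sqrt{d_1 d_2 d_3})$ with $d_3 = -\ell$, and invokes Lemmermeyer's criterion to produce an \emph{everywhere unramified} $\cQ$-extension $M/F$ containing $KF$; unramifiedness then forces $[\e]=0$ in $H^2(\pi_1(\Spec\cO_F),\Z/2)$ and hence $CS_{c_1}([\rho]) = 0$, while the decomposition formula of Proposition \ref{prop:V4 general example}, applied via $F^+ = LF$, yields $CS_{c_1}([\rho]) = \tfrac12$ exactly when $\Delta(d_1,d_2) = -1$, giving the contradiction. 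Your route is shorter, purely local, and (as you observe) drops the odd-discriminant hypothesis, ruling out \emph{all} quaternion extensions of $\Q$ containing $K$. The paper's route is longer but is the point of the section: it exhibits the arithmetic Chern--Simons invariant functioning as an obstruction to an unramified embedding problem, the analogue of Herbrand's theorem the authors are after. One small inaccuracy to flag: the facts $d_i \equiv 1 \pmod 4$, $d_i > 0$, $\gcd(d_1,d_2)=1$ come from the standing hypotheses at the start of Section 6, not from Proposition \ref{prop:quaternion}, which instead \emph{deduces} such facts from the existence of an odd-discriminant $\cQ$-extension (the thing being ruled out here); the facts themselves are correct and your computation goes through.
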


A referee of an earlier version of this paper has pointed out that this result  can also be obtained using the 
theorem\footnote{$K$ extends to a quaternion extension if and only if the Hilbert symbols $(d_1, d_2)$ and $(d_1 d_2, -1)$ agree in the Brauer group.} of Witt in \cite[p. 244]{Wi} (or (7.7) on \cite[p. 106]{Fr}). (In our situation, if such a field $L$ exists, the theorem implies $\Delta(d_1, d_2)=1$, which gives us a contradiction.)  So this proposition should be viewed as a new perspective rather than a new result. In fact,  propositions \ref{prop:application} and \ref{prop:application2}  deal with a  class of embedding problems wherein the existence of an unramified extension forces a Chern-Simons invariant to be zero. The outline of proof together with the explicit formulas for computing the Chern-Simons invariant should make clear that even the simplest $\zmod 2$-valued case is likely to have a non-trivial range of applications. We consider  the point of view presented here  as  a simple and rough analogue of the classical theorem of Herbrand, whereby the existence of certain unramified extensions of cyclotomic fields forces some $L$-values to be congruent to zero (\cite[\textsection 6.3]{washington}). In future papers, we hope to discuss this analogy in greater detail and investigate the possibility of `converse Herbrand' type results in the spirit of Ribet's theorem \cite{ribet}.

\begin{proof}
Suppose that there does exist such a field $L/\Q$ satisfying all the given properties above.
Choose a prime $\ell$ such that
\begin{itemize}
\item
$\ell$ does not divide $d_1 d_2$.
\item
$\ell \equiv 3 \pmod 4$.
\item
$\leg {-\ell}{p_i} = A_i$ and $\leg {-\ell}{q_j}=B_j$ for all $i$ and $j$.
\end{itemize}
In fact, $\ell \equiv a \pmod {4d_1d_2}$ for some $a$ with $(a, 4d_1d_2)=1$, and hence there are infinitely many such primes by Dirichlet's theorem.

Now let $d_3:=\ell^*=-\ell$. And let $F=\Q(\sqrt{d_1d_2d_3})$. Then by direct computation using the quadratic reciprocity law, we get
\[
\leg {d_1d_2}{\ell}=\prod_{1\leq i \leq s} \leg {p_i}{\ell} \prod_{1 \leq j \leq t} \leg {q_j}{\ell} 
=\prod_{1 \leq i \leq s} \leg {-\ell}{p_i} \prod_{1 \leq j \leq t} \leg {-\ell}{q_j}=\Delta(d_1, d_2) \cdot \Delta(d_2, d_1).
\]
Thus by the above lemma, we get
\[
\leg {d_1 d_2}{\ell}=1.
\]
Furthermore, for all $i$ and $j$
\[
\leg {d_2 d_3}{p_i}=A_i^2=1 \qa \leg {d_3 d_1}{q_j}=B_j^2=1.
\]
Therefore by \cite[Theorem 1]{lemme}, there is a Galois extension $M/\Q$ such that $M/F$ is unramified everywhere, and
$\Gal(M/F)\simeq \cQ$. Furthermore $KF=F(\sqrt{d_1}, \sqrt{d_2})$ is the unique subfield of $M$ with $\Gal(M/{KF})\simeq \zmod 2$. 

Let $A=V_4$, and let $c_i=\a_i \cup \e$, where 
$\a_i \in H^1(A, \mu_2)$ and $\e \in Z^2(A, \zmod 2)$ represents the extension $\cQ$.
Since $M/F$ is an unramified $\cQ$-extension, $[\e]=0 \in H^2(\pi, \zmod 2)$, where
$\pi=\pi_1(\Spec(\cO_F), \fb)$ as before. Thus, $[c_i]=0 \in H^3(X, \mu_2)$ for all $i$.
This implies that $CS_{c_i}([\rho])=0$ for all $i$, where $\rho \in \Hom(\pi, A)$ factors through
\[
\pi \surj \Gal(KF/F)\simeq A.
\]
Take $\a_1$ so that $F^{\a_1}=F(\sqrt{d_1})$. Since
\[
\prod_{1 \leq i \leq s} \leg {-d_2\cdot \ell}{p_i} \times \prod_{1 \leq j \leq t} \leg {d_1}{q_j}
=\prod_{1 \leq j \leq t} B_j = \Delta(d_2, d_1)=\Delta(d_1, d_2)
=-1
\]
by assumption, we get
\[
CS_{c_1}([\rho])=\frac{1}{2}
\]
by Proposition \ref{prop:V4 general example}, which is a contradiction. Thus, there cannot exist such $L$.
\end{proof}
\ms
\begin{rem}
For the explicit construction of quaternion extensions $L$ of $\Q$, see \cite{fujisaki} or \cite[Theorem 4.5]{vaughan}.
\end{rem}

In the LMFDB, you can search for $\cQ$-extensions $L$ over $\Q$ with odd discriminants. We make a table for readers, which verifies our theorem numerically. Here $\Delta=\Delta(d_1, d_2)$.
\ms

\begin{center}
\begin{tabular}{| c | c | c | c | c | c | c | c | c | c | c | c |}
\hline
$d_1$ & $d_2$ & $\Delta$ & $\exists L$? & $d_1$ & $d_2$ & $\Delta$& $\exists L$? & $d_1$ & $d_2$ & $\Delta$& $\exists L$? \\ \hline
$5$ & $13$ & $-1$ & No & $13$ & $17$ & $1$ & Yes \cite{lmfdb21}& $17$ & $21$ & $1$ & Yes \cite{lmfdb31}\\ \hline
$5$ & $17$ & $-1$ & No & $13$ & $21$ & $-1$ & No & $17$ & $29$ & $-1$ & No \\ \hline
$5$ & $21$ & $1$ & Yes \cite{lmfdb11}& $13$ & $29$ & $1$ & Yes \cite{lmfdb22}& $17$ & $33$ & $1$ & Yes \cite{lmfdb32}\\ \hline
$5$ & $29$ & $1$ & Yes \cite{lmfdb12}& $13$ & $33$ & $-1$ & No & $17$ & $37$ & $-1$ & No \\ \hline
$5$ & $33$ & $-1$ & No & $13$ & $37$ & $-1$ & No & $17$ & $41$ & $-1$ & No \\ \hline
$5$ & $37$ & $-1$ & No & $13$ & $41$ & $-1$ & No & $17$ & $53$ & $1$ & Yes \cite{lmfdb33}\\ \hline
$5$ & $41$ & $1$ & Yes \cite{lmfdb13}& $13$ & $53$ & $1$ & Yes \cite{lmfdb23}& $17$ & $57$ & $-1$ & No \\ \hline
$5$ & $53$ & $-1$ & No & $13$ & $57$ & $-1$ & No & $17$ & $61$ & $-1$ & No \\ \hline
$5$ & $57$ & $-1$ & No & $13$ & $61$ & $1$ & Yes \cite{lmfdb24}& $17$ & $65$ & $-1$ & No \\ \hline
$5$ & $61$ & $1$ & Yes \cite{lmfdb14}& $13$ & $69$ & $1$ & No & $17$ & $69$ & $1$ & Yes \cite{lmfdb34} \\ \hline
\end{tabular}\\
\vspace{2mm}
Table 1.
\end{center}
\ms

When $d_1=13$ and $d_2=3\cdot 23=69$, there cannot exist such $L$ even though $\Delta(d_1, d_2)=1$. This follows from the following proposition which is already known to experts (e.g. \cite{vaughan}). For the sake of readers, we provide a complete proof as well.
\begin{prop}\label{prop:application2}
Let $K=\Q(\sqrt{d_1}, \sqrt{d_2})$ as above. Let $p$ be a prime divisor of $d_i$, which is congruent to $3$ modulo $4$. 
If $\leg {d_{3-i}}{p}=1$, then there cannot exist a number field $L$ such that $\Gal(L/\Q)\simeq \cQ$ and $K \subset L$.
\end{prop}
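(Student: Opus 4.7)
The plan is to derive a local obstruction at the ramified prime $p$ by analyzing the decomposition group in $\cQ$ and invoking the structure of tamely ramified extensions of $\Q_p$. Without loss of generality assume $p \mid d_1$, $p \equiv 3 \pmod 4$, and $\leg{d_2}{p} = 1$. Suppose for contradiction that such a quaternion extension $L \supset K$ of $\Q$ exists.

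First I would determine the splitting of $p$ in $K = \Q(\sqrt{d_1}, \sqrt{d_2})$. Since $p \mid d_1$, $p$ ramifies in $\Q(\sqrt{d_1})/\Q$; since $\leg{d_2}{p}=1$, $p$ splits in $\Q(\sqrt{d_2})/\Q$ and the extension $K/\Q(\sqrt{d_2})$ is ramified at the primes above $p$. A straightforward count gives $p\cO_K = \mathfrak{P}_1^2\mathfrak{P}_2^2$ with $e=f=1$ at each prime over $\Q(\sqrt{d_2})$, so the decomposition and inertia groups of $p$ in $\Gal(K/\Q) = V_4$ both coincide with $\Gal(K/\Q(\sqrt{d_2}))$, a specific subgroup of order $2$.

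Next I would pick a prime $\mathfrak{P}$ of $L$ above $p$ and analyze its decomposition and inertia groups $D_{\mathfrak{P}}, I_{\mathfrak{P}} \subset \Gal(L/\Q) = \cQ$. The image of $D_\mathfrak{P}$ (resp.\ $I_\mathfrak{P}$) in $V_4$ must equal the decomposition (resp.\ inertia) group at $p$ in $K$, hence both have image of order $2$. The key observation is that the preimage in $\cQ$ of any order-$2$ subgroup of $V_4$ under the projection $\cQ \twoheadrightarrow \cQ/\{\pm 1\} = V_4$ is one of the three cyclic subgroups of order $4$ in $\cQ$. Inside such a $\Z/4\Z$, the unique proper nontrivial subgroup (order $2$) is precisely the kernel of the projection to $V_4$; therefore any subgroup mapping onto the order-$2$ image must itself have order $4$. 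This forces $|D_\mathfrak{P}| = |I_\mathfrak{P}| = 4$ and $I_\mathfrak{P} = D_\mathfrak{P}$, meaning $L_\mathfrak{P}/\Q_p$ is a totally ramified cyclic extension of degree $4$.

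Finally I would close the argument using tame ramification. Since $p$ is odd, the wild inertia inside $\cQ$ (a $p$-subgroup) is trivial, so $L_\mathfrak{P}/\Q_p$ is tame. A totally tamely ramified Galois extension of $\Q_p$ of degree $n$ (with $\gcd(n,p)=1$) exists only when $\mu_n \subset \Q_p$, i.e.\ $n \mid p-1$. Applied with $n=4$, this forces $p \equiv 1 \pmod 4$, contradicting the hypothesis $p \equiv 3 \pmod 4$. The main obstacle, if any, is really just the bookkeeping in the decomposition/inertia analysis; once one sees that $\cQ$ has no Klein-four subgroup (so the preimage of an order-$2$ subgroup of $V_4$ is cyclic), the rest is automatic. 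Note that in contrast to Proposition~\ref{prop:quaternion}, no hypothesis on the parity of $d_L$ is needed here, since the contradiction is extracted purely at the tame odd prime $p$.
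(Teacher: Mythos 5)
Your proof is correct, and its conceptual core is the same as the paper's: extract the local obstruction at the ramified prime $p$, namely that the completion $L_{\mathfrak{P}}/\Q_p$ would be a cyclic, totally and tamely ramified extension of degree $4$, which cannot exist when $p\equiv 3\pmod 4$. The two proofs differ in the bookkeeping. The paper establishes that the ramification index of $p$ in $L/\Q$ is $4$ by appealing to the argument of Proposition~\ref{prop:quaternion}, which in turn cites Lemmermeyer's Corollary~3, and then derives the contradiction from local class field theory: the Galois group of a totally ramified abelian extension of $\Q_p$ is a quotient of $\Z_p^\times\simeq\Z/(p-1)\times\Z_p$, which has no quotient $\Z/4\Z$ when $p\equiv 3\pmod 4$. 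You instead obtain the ramification index directly from the group theory of $\cQ$: the preimage of an order-$2$ subgroup of $V_4$ under $\cQ\surj V_4$ is one of the cyclic subgroups of order $4$, whose unique proper nontrivial subgroup is precisely the kernel, so the inertia group must fill out the whole cyclic group of order $4$; and then you invoke the Kummer-theoretic criterion that a totally tamely ramified Galois extension of $\Q_p$ of degree $n$ (with $\gcd(n,p)=1$) exists iff $n\mid p-1$. The two local criteria are of course equivalent, but your derivation of the ramification index from the subgroup structure of $\cQ$ is self-contained and a bit more transparent than the paper's citation of an external result, and your observation that no hypothesis on the parity of $d_L$ is required is correct and worth keeping explicit.
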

\begin{proof}
Let $p$ be a prime divisor of $d_2$, which is congruent to $3$ modulo $4$.
Suppose that $\leg {d_{1}}{p}=1$ and there exists such a field $L$.
Then by the same argument as in Proposition \ref{prop:quaternion}, the ramification index of $p$ in $L/\Q$ is $4$.  
Let $\cO=\Z[\sqrt{d_1}]$ be the ring of integers of $\Q(\sqrt{d_1})$.
Then, since $\leg {d_1}{p}=1$, $p\cO=\wp\cdot \wp'$ for two different maximal ideals $\wp$ and $\wp'$. 
Thus, $D(\wp)=I(\wp) \simeq \zmod 4$, where $D(\wp)$ (resp. $I(\wp)$) is the decomposition group (resp. inertia group) of $\wp$ in $\Gal(L/\Q)\simeq \cQ$.
Since $\cO_\wp \simeq \Z_p$, the $D(\wp)=I(\wp) \simeq \zmod 4$ can be regarded as a quotient of $\Z_p^\times \simeq \zmod {(p-1)} \times \Z_p$.
Because $p-1 \equiv 2 \pmod 4$, this is a contradiction and hence the result follows.
\end{proof}

\ms

\vspace{7mm}
\appendix

\section{Conjugation on group cochains} \label{sec:Appendix A}

We compute cohomology of a topological group $G$ with coefficients in a topological abelian group $M$ with continuous $G$-action using the complex whose component of degree $i$ is
$C^i(G, M)$, the continuous maps from $G^i$ to $M$. The differential
$$d: C^i(G,M)\to C^{i+1}(G,M)$$
is given by
$$df(g_1, g_2, \ldots, g_{i+1})$$
$$=g_1f(g_2, \ldots, g_{i+1})+
\sum_{k=1}^i f(g_1, \ldots, g_{k-1}, g_k g_{k+1}, g_{k+2}, \ldots, g_{i+1})+
(-1)^{i+1} f(g_1, g_2, \ldots, g_i).$$
We denote by
$$B^i(G,M)\subset Z^i(G,M)\subset C^i(G,M)$$
the images and the kernels of the differentials, the coboundaries and the cocycles, respectively. The cohomology is then defined as
$$H^i(G,M):=Z^i(G,M)/B^i(G,M).$$
There is a natural right action of $G$ on the cochains given by 
$$a:  c\mapsto c^a:=a^{-1}c\circ \Ad_a,$$
where $\Ad_a$ refers to the conjugation action of $a$ on $G^i$. 
\begin{lem} The $G$ action on cochains commutes with $d$:
$$d(c^a)=(dc^a)$$
for all $a\in G$.
\end{lem}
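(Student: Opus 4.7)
The plan is to verify the identity $d(c^a)=(dc)^a$ by a direct unpacking of both sides according to the definitions, making essential use of the fact that $\Ad_a\colon G\to G$ is a group automorphism and that the module action of $G$ on $M$ is associative.

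First I would fix notation: write $\Ad_a(g)=aga^{-1}$, so that $(c^a)(g_1,\dots,g_i)=a^{-1}\cdot c(\Ad_a(g_1),\dots,\Ad_a(g_i))$. Then I would expand $d(c^a)(g_1,\dots,g_{i+1})$ using the cochain differential formula supplied in the lemma's preamble, producing three kinds of terms: the ``leading'' term $g_1\cdot c^a(g_2,\dots,g_{i+1})$, the $i$ ``middle'' terms of the form $c^a(g_1,\dots,g_kg_{k+1},\dots,g_{i+1})$, and the ``trailing'' term $(-1)^{i+1}c^a(g_1,\dots,g_i)$. Substituting the definition of $c^a$ into each converts every term into $a^{-1}\cdot c(\dots)$ evaluated on tuples involving $\Ad_a$.

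Next I would expand $(dc)^a(g_1,\dots,g_{i+1})=a^{-1}\cdot (dc)(\Ad_a(g_1),\dots,\Ad_a(g_{i+1}))$ using the differential again. The trailing terms on the two sides match immediately. For the middle terms, the match uses that $\Ad_a$ is a homomorphism: $\Ad_a(g_k)\Ad_a(g_{k+1})=\Ad_a(g_kg_{k+1})$, so every middle summand of $(dc)^a$ coincides with the corresponding middle summand obtained from $d(c^a)$.

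The only step that requires a small manipulation is the leading term. On the $(dc)^a$ side it reads
\[
a^{-1}\cdot\bigl(\Ad_a(g_1)\cdot c(\Ad_a(g_2),\dots,\Ad_a(g_{i+1}))\bigr)=a^{-1}\cdot(ag_1a^{-1})\cdot c(\dots)=g_1\cdot a^{-1}\cdot c(\dots),
\]
which by the definition of $c^a$ is precisely $g_1\cdot c^a(g_2,\dots,g_{i+1})$, matching the leading term of $d(c^a)$. Here the associativity of the $G$-action on $M$ is the crux of the matter. Putting the three comparisons together yields $d(c^a)=(dc)^a$. There is no real obstacle here; the lemma is a purely formal verification, and the only point worth highlighting is the cancellation $a^{-1}\cdot (aga^{-1})=ga^{-1}$ in the module, which is what allows the leading term of the differential to commute with the conjugation action.
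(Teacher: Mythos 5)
Your proof is correct and takes essentially the same approach as the paper: expand both sides of $d(c^a)=(dc)^a$ via the explicit cochain differential, use that $\Ad_a$ is a homomorphism to match the middle and trailing terms, and handle the leading term via the identity $g_1\cdot a^{-1} = a^{-1}\cdot\Ad_a(g_1)$ acting on $M$. The paper's proof is the same term-by-term verification with the same pivotal manipulation of the leading term.
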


\begin{proof}
If $c\in C^i(G,M)$, then
$$d(c^a)(g_1, g_2, \ldots, g_{i+1})=g_1a^{-1}c(\Ad_a(g_2), \ldots, \Ad_a(g_{i+1}))$$
$$+
\sum_{k=1}^i a^{-1}c(\Ad_a(g_1), \ldots, \Ad_a( g_{k-1}), \Ad_a( g_k) \Ad_a( g_{k+1}), \Ad_a(g_{k+2}), \ldots, \Ad_a(g_{i+1}))$$
$$+
(-1)^{i+1} a^{-1}c(\Ad_a(g_1), \Ad_a(g_2), \ldots, \Ad_a(g_i))$$
$$=
a^{-1}\Ad_a(g_1)c(\Ad_a(g_2), \ldots, \Ad_a(g_{i+1}))$$
$$+
\sum_{k=1}^i a^{-1}c(\Ad_a(g_1), \ldots, \Ad_a( g_{k-1}), \Ad_a( g_k) \Ad_a( g_{k+1}), \Ad_a(g_{k+2}), \ldots, \Ad_a(g_{i+1}))$$
$$+
(-1)^{i+1} a^{-1}c(\Ad_a(g_1), \Ad_a(g_2), \ldots, \Ad_a(g_i))$$
$$=a^{-1}(dc)(\Ad_a(g_1), \Ad_a(g_2), \ldots, \Ad_a(g_{i+1}))$$
$$=(dc)^a(g_1, g_2, \ldots, g_{i+1}).$$
\end{proof}
We also use the notation $(g_1, g_2, \ldots, g_i)^a:=\Ad_a(g_1, g_2, \ldots, g_i)$.
It is well known that this action is trivial on cohomology. We wish to show the construction of explicit $h_a$ with the property that $$c^a=c+dh_a$$
for cocycles of degree 1, 2, and 3. The first two are relatively straightforward, but degree 3 is somewhat delicate. In degree 1, first note that
$c(e)=c(ee)=c(e)+ec(e)=c(e)+c(e)$, so that $c(e)=0$. Next, $0=c(e)=c(gg^{-1})=c(g)+gc(g^{-1})$, and hence, $c(g^{-1})=-g^{-1}c(g).$
Therefore, 
$$c(aga^{-1})=c(a)+ac(ga^{-1})=c(a)+ac(g)+agc(a^{-1})=c(a)+ac(g)-aga^{-1}c(a).$$
From this, we get
$$c^a(g)=c(g)+a^{-1}c(a)-ga^{-1}c(a).$$
That is,
$$c^a=c+dh_a$$
for the zero cochain $h_a(g)=a^{-1}c(a).$

\ms

\begin{lem}\label{lem: appendix 6.2}
For each $c\in Z^i(G,M)$ and $a\in G$, we can associate an $$h^{i-1}_a[c]\in C^{i-1}(G,M)/B^{i-1}(G,M)$$ in such a way that
\begin{equation*}
\begin{aligned}
(1) \quad & c^a-c=dh^{i-1}_a[c]; \qquad\qquad\\
(2) \quad & h_{ab}^{i-1}[c]=(h^{i-1}_a[c])^b+h^{i-1}_b[c].
\end{aligned}
\end{equation*}
\end{lem}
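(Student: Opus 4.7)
The strategy is to realise $h^{i-1}_a[c]$ as the image of $c$ under an explicit chain homotopy operator $T_a \colon C^i(G, M) \to C^{i-1}(G, M)$ witnessing the classical fact that the inner automorphism $\Ad_a$ acts trivially on $H^*(G, M)$. Concretely, one defines $T_a$ by the standard formula arising from the simplicial homotopy on the bar resolution of $G$,
\[
(T_a c)(g_1, \ldots, g_{i-1}) = \sum_{k=0}^{i-1} \pm\, a^{-1}\, c(\Ad_a(g_1), \ldots, \Ad_a(g_k),\, a,\, g_{k+1}, \ldots, g_{i-1}),
\]
with alternating signs, specialising to $h_a = a^{-1}c(a)$ in the $i=1$ case already treated. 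A direct chain-level calculation, using the cocycle identity on $c$, yields the homotopy relation $dT_a + T_a\,d = \phi_a - \id$, where $\phi_a(c) := c^a$. Setting $h^{i-1}_a[c] := T_a(c) \bmod B^{i-1}$, property (1) follows immediately: for $c \in Z^i$,
\[
d\, h^{i-1}_a[c] = dT_a(c) = (dT_a + T_a d)(c) - T_a(dc) = \phi_a(c) - c = c^a - c.
\]

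For property (2) the plan is to exploit the composition law $\phi_{ab} = \phi_b \circ \phi_a$, which is immediate from $(c^a)^b = c^{ab}$. Both $T_{ab}$ and $\phi_b T_a + T_b$ are then chain homotopies from $\id$ to $\phi_{ab}$, since
\[
d(\phi_b T_a + T_b) + (\phi_b T_a + T_b)\,d = \phi_b(\phi_a - \id) + (\phi_b - \id) = \phi_{ab} - \id.
\]
Thus the defect $\Delta_{a,b} := T_{ab} - \phi_b T_a - T_b$ anticommutes with $d$ and therefore sends cocycles to cocycles. Applied to a fixed $c \in Z^i$, this already gives
\[
h^{i-1}_{ab}[c] - (h^{i-1}_a[c])^b - h^{i-1}_b[c] \in Z^{i-1}(G, M),
\]
which is property (2) modulo cocycles.

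The main obstacle, and the technical heart of the argument, is to upgrade this conclusion from $Z^{i-1}$ to $B^{i-1}$, so that (2) actually holds in the quotient $C^{i-1}/B^{i-1}$. The natural route is to produce an explicit secondary operator $U_{a,b}\colon C^i \to C^{i-2}$ with $\Delta_{a,b} = dU_{a,b} + U_{a,b}\,d$; then for any cocycle $c$ one has $\Delta_{a,b}(c) = dU_{a,b}(c) \in B^{i-1}$, as required. Such a $U_{a,b}$ is built by inserting \emph{both} $a$ and $b$ into the argument list of $c$ at ordered pairs of positions with appropriate alternating signs, and the identity $\Delta_{a,b} = dU_{a,b} + U_{a,b}\,d$ then reduces, after a careful expansion and re-indexing of the resulting double sums, to the cocycle identity on $c$. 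Conceptually, the existence of $U_{a,b}$ is the coherence statement that the two trivialisations of $\Ad_{ab}$---directly via $ab$, and via $\Ad_b \circ \Ad_a \simeq \Ad_b \simeq \id$---are themselves canonically homotopic. This coherence is precisely the technical content of Appendix~\ref{sec:appendix B}, which packages the tower of data $(\Ad_\bullet, T_\bullet, U_{\bullet,\bullet}, \ldots)$ into the 2-categorical nerve of $G$ viewed as a one-object 2-groupoid, and we invoke that appendix to close the argument without grinding through the combinatorics.
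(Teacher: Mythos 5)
Your proof takes a genuinely different route from the paper's own proof of Lemma~\ref{lem: appendix 6.2}. The paper's Appendix~\ref{sec:Appendix A} argues by induction on $i$: embed $M$ into the coinduced module $C^1(G,M)$ (which Mostow shows is $G$-acyclic), lift the cocycle $c$ to $F\in C^{i-1}(G,C^1(G,M))$ with $dF=c$, push forward to the quotient $N=C^1(G,M)/M$, invoke the inductive hypothesis to get $k_a$ there, and then set $h_a=F^a-F-dK_a$ with $K_a=s\circ k_a$; property (2) is checked by a direct computation with the $K_a$ and the inductive cocycle identity for the $k_a$. No explicit homotopy formula is ever produced, and no secondary operator is needed. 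Your approach is instead the explicit chain-homotopy route, which is precisely the one developed as an \emph{alternative} in Appendix~\ref{sec:appendix B}: take $T_a$ to be the bar-resolution insertion operator, get (1) at once, and reduce (2) to producing a secondary operator $U_{a,b}$ with $T_{ab}-\phi_bT_a-T_b=dU_{a,b}+U_{a,b}d$.

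You have correctly identified the crux: that $\Delta_{a,b}:=T_{ab}-\phi_bT_a-T_b$ anticommutes with $d$ only shows $\Delta_{a,b}(c)$ is a \emph{cocycle} when $c$ is, whereas (2) requires it be a \emph{coboundary}; and you have correctly named the fix (a second-order homotopy $U_{a,b}$, built by inserting $a$ and $b$ via shuffles). However, you do not actually write down $U_{a,b}$ or verify the required identity; you invoke Appendix~\ref{sec:appendix B}, but that appendix itself merely states the relevant fact (Proposition~\ref{P:relations}) with the remark that it ``can be proved using a variant of Stokes' formula for cochains,'' deferring the combinatorics to a forthcoming article. So, read on its own, your argument has an unresolved step exactly where you flag the main obstacle, whereas the paper's Appendix~\ref{sec:Appendix A} proof is complete and self-contained (granting Mostow's acyclicity). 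The trade-off is clear: your route is more explicit and functorial, and yields formulas one could compute with, but it requires carrying out the shuffle/Stokes verification; the paper's route is structural, gives (2) essentially for free from the inductive set-up, but produces no usable formula for $h^{i-1}_a[c]$.
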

\begin{proof}
This is clear for $i=0$ and we have shown above the construction of $h^0_a[c]$ for $c\in Z^1(G,M)$ satisfying (1). Let us check the condition (2):
$$h^0_{ab}[c](g)=(ab)^{-1}c(ab)$$
$$=b^{-1}a^{-1}(c(a)+ac(b))=b^{-1}h^0_a[c](\Ad_b(g))+h^0_b[c](g)=(h^0_a[c])^b(g)+h^0_b[c](g).$$
We prove the statement using induction on $i$, which we now assume to be $\geq 2$.
For a module $M$, we have the exact sequence
$$0\to M\to C^1(G,M)\to N\to 0,$$
where $C^1(G,M)$ has the right regular action of $G$ and $N=C^1(G,M)/M$. Here, we give $C^1(G,M)$ the topology of pointwise convergence. There is a canonical linear splitting $s: N\to C^1(G,M)$ with image the group of functions $f$ such that $f(e)=0$, using which we topologise $N$. According to \cite[Proof of 2.5]{mostow}, the $G$-module $C^1(G,M)$ is acyclic\footnote{The notation there for $C^1(G,M)$ is $F^0_0(G,M)$. One difference is that Mostow uses the complex $E^*(G,M)$ of equivariant homogeneous cochains in the definition of cohomology. However, the  isomorphism $E^n\to C^n$ that sends $f(g_0, g_1, \ldots, g_n)$ to
$f(1, g_1, g_1g_2, \ldots, g_1g_2\cdots g_n)$ identifies the two definitions. This is the usual comparison map one uses for discrete groups, which clearly preserves continuity. }, that is, $$H^i(G, C^1(G,M))=0$$ for $i>0$. Therefore, given a cocycle $c\in Z^i(G, M)$, there is an $F\in C^{i-1}(G, C^1(G,M))$ such that
its image $f \in C^{i-1}(G,N)$ is a cocycle and $dF=c$. Hence, $d(F^a-F)=c^a-c$. Also, by induction, there is a $ k_a\in C^{i-2}(G,N)$ such that $f^a-f=dk_a$ and $k_{ab}=(k_a)^b+k_b+dl$ for some $l\in C^{i-3}(G,N)$ (zero if $i=2$). Let $K_a=s\circ k_a $ and put $$h_a=F^a-F-dK_a.$$ Then the image of $h_a$ in $N$ is zero, so $h_a$ takes values in $M$, and $dh_a=c^a-c$. Now we check property (2). Note that
$$K_{ab}=s\circ k_{ab} =s\circ (k_a)^b+s\circ k_b+s\circ dl.$$ But $s\circ (k_a)^b-(s\circ k_a)^b$ and $s\circ dl-d(s\circ l)$ both have image in $M$. Hence, $K_{ab}=K_a^b+K_b+d(s\circ l)+m$ for some cochain $m\in C^{i-2}(G,M)$. From this, we deduce $$dK_{ab}=(dK_a)^b+dK_b+dm,$$from which we get
$$h_{ab}=F^{ab}-F-dK_{ab}=(F^a)^b-F^b+F^b-F-(dK_a)^b-dK_b-dm=(h_a)^b+h_b+dm.$$
\end{proof}

\section{Conjugation action on group cochains: categorical approach}\label{sec:appendix B}
\begin{flushright}
\author{by Behrang Noohi\footnote{School of Mathematical Sciences
Queen Mary, Univ. of London,
Mile End Road London E1 4NS.}}
\end{flushright}
\ms

In this section, an alternative and conceptual proof of Lemma \ref{lem: appendix 6.2} is outlined. Although not strictly necessary for the purposes of this paper, we believe that a functorial theory of secondary classes in group cohomology will be important in future developments. This point has also been emphasised to M.K. by Lawrence Breen. More details and elaborations will follow in a forthcoming publication by B.N.

\subsection{Notation}
In what follows $G$ is a group and $M$ is a left $G$-module. The action is denoted by
$\act{a}{m}$. The left conjugation action of $a\in G$ on $G$ is denoted
$\ad_a(x)=axa^{-1}$. We have an induced right action on 
$n$-cochains  $f \: G^{n} \to M$ given by
\[f^a(\mathbf{g}):=\act{a^{-1}}{(f(\ad_{a}\mathbf{g}))}.\]
Here, $\mathbf{g} \in G^n$ is an $n$-chain, and $\ad_a\mathbf{g}$ is defined componentwise.

In what follows, $[n]$ stands for the ordered set $\{0,1,\ldots,n\}$, viewed as
a category.

\subsection{Idea}
The above action on cochains respects the differential, hence passes to cohomology. It is 
well known that the induced action on cohomology is trivial. That is, given an
$n$-cocycle $f$ and any element $a\in G$, the difference $f^a - f$ is a coboundary.
In this appendix we explain how to construct an $(n-1)$-cochain
$h_{a,f}$ such that $d(h_{a,f})=f^a- f$. The construction, presumably well known,
uses standard ideas from simplicial homotopy theory. The general case of this construction,
as well as the missing proofs of some of the statements in this appendix will appear 
in a separate article.

Let $\mcG$ denote the one-object category (in fact, groupoid)  with morphisms $G$.
For an element $a \in G$, we have an action of $a$ on $\mcG$ which, by abuse of notation,
we will denote again by $\ad_a \: \mcG \to \mcG$; it fixes the unique object and acts
on morphisms by conjugation by $a$.

The main point in the construction of the cochain $h_{a,f}$ is that there is a 
``homotopy'' (more precisely, a natural transformation) $H_a$ from  the 
identity functor $\id \: \mcG \to \mcG$ to $\ad_a \: \mcG \to \mcG$.  
The homotopy between $\id$ and $\ad_a$  is given by the functor
$H_a \: \mcG\times[1] \to \mcG$ defined by 
      \[ H_a|_0= \id, \ \ H_a|_1=\ad_a, \text{ and } H_a(\iota) = a^{-1}.\]
It is useful to visualise the category $\mcG\times[1]$ as
    \[\xymatrix@C=30pt@R=12pt@M=3pt{ 0 \ar[r]^{\iota} \ar@(ur,ul)[]_G 
                          & 1 \ar@(ur,ul)[]_G}.\]

\subsection{Cohomology of categories}
We will use multiplicative notation for morphisms in a category, namely,
the  composition of $g \: x \to y$ with $h \: y \to z$ is denoted  $gh \: x \to z$.

Let $\mcC$ be a small category and $M$ a left $\mcC$-module, that is, a functor 
$M\: \mcC^{\text{op}} \to \mathbf{Ab}$, $ x \mapsto M_x$, to the category of abelian groups
(or your favorite linear category). 
Note that when $\mcG$ is as above, this is  nothing but a left $G$-module 
in the usual sense. For an arrow $g \: x \to y$ in $\mcC$, we denote the induced map
$M_y \to M_x$ by $m \mapsto \act{g}{m}$.

Let $\mcC^{[n]}$ denote the set of all $n$-tuples $\bfg$ of composable arrows in  $\mcC$,
    \[\bfg \ = \ \bullet \xrightarrow{g_1} \bullet \xrightarrow{g_2} 
           \cdots \xrightarrow{g_n}\bullet.\]
We refer to such a $\bfg$ as an  {\bf $n$-cell} in $\mcC$; this is the same thing
as a functor $[n] \to \mcC$,
which we will denote, by abuse of notation, again by $\bfg$. 

An {\bf $n$-chain} in $\mcC$ is an element in the free abelian
group $\oC_n(\mcC,\mathbb{Z})$ generated by the set $\mcC^{[n]}$ of $n$-cells.
For an $n$-cell $\bfg$ as above, we let $s\bfg \in \Ob\mcC$ denote  the source of $g_1$.

By an {\bf $n$-cochain} on $\mcC$ with values in $M$ we mean a map
$f$ that assigns to any $n$-cell $\bfg \in \mcC^{[n]}$ an element in $M_{s\bfg}$.
Note that, by linear extension, we can evaluate $f$ on any $n$-chain 
in which all $n$-cells share a common source point.

The $n$-cochains form an abelian group  $\oC^n(\mcC,M)$.
The {\bf cohomology} groups $\oH^n(\mcC,M)$, $n\geq 0$, are defined using the  
cohomology complex $\oC^{\bullet}(\mcC,M)$: 
   \[  0 \xrightarrow{}\oC^0(\mcC,M) \xrightarrow{d}  \oC^1(\mcC,M) \xrightarrow{d}
   \cdots \xrightarrow{d} \oC^n(\mcC,M) \xrightarrow{d} \oC^{n+1}(\mcC,M) 
   \xrightarrow{d} \cdots\]
where the differential 
\[d \:\oC^n(\mcC,M) \to \oC^{n+1}(\mcC,M)\] 
is defined by
  \begin{multline*}
   df(g_1,g_2,\ldots,g_{n+1})   =  \act{g_1}(f(g_2,\ldots,g_{n+1})) +
   \underset{1\leq i \leq n}\sum(-1)^if(g_1,\ldots,g_ig_{i+1},\ldots,g_{n+1}) \\
   + (-1)^{n+1}f(g_1,g_2,\ldots,g_{n}).
  \end{multline*}

A left $G$-module $M$ in the usual sense gives rise to a left module on $\mcG$, 
which we denote again by $M$. We sometimes denote $\oC^{\bullet}(\mcG,M)$
by $\oC^{\bullet}(G,M)$. Note that the corresponding cohomology groups coincide with
the group cohomology  $\oH^n(G,M)$.

The cohomology complex $\oC^{\bullet}(\mcC,M)$ and the cohomology groups $\oH^n(\mcC,M)$
are functorial in $M$. They are also functorial in $\mcC$ in the following sense. 
A functor $\varphi \: \mcD \to \mcC$ gives rise to a $\mcD$-module  
$\varphi^*M:=M\circ \varphi \: \mcD^{op} \to \mathbf{Ab}$. We have a  map of complexes 
  \begin{equation}\label{Eq:1} \varphi^* \: \oC^{\bullet}(\mcC,M) 
     \to \oC^{\bullet}(\mcD,\varphi^*M), 
  \end{equation}
which gives rise to the maps
  \[ \varphi^* \: \oH^{n}(\mcC,M) \to \oH^{n}(\mcD,\varphi^*M) \]
on cohomology, for all $n\geq 0$.

\subsection{Definition of the cochains $h_{a,f}$}
The flexibility we gain by working with chains on general categories allows us
to import standard ideas from topology to this setting. The following definition
of the cochains $h_{a,f}$ is an imitation of a well known construction in topology.

Let $f \in \oC^{n+1}(G,M)$ be an $(n+1)$-cochain, and $a \in G$ an element. Let
$H_a \: \mcG\times[1] \to \mcG$ be the corresponding natural
transformation. We define $h_{a,f} \in \oC^{n}(G,M)$ by
      \[h_{a,f}(\bfg)=f(H_a(\bfg\times[1])).\]
Here, $\bfg \in \mcC^{[n]}$ is an $n$-cell in $\mcG$, so $\bfg\times[1]$ is an  
$(n+1)$-chain in $\mcG\times[1]$, namely, the cylinder over $\bfg$.

To be more precise, we are using the notation $\bfg\times[1]$ for the image of 
the fundamental class of $[n]\times [1]$ in $\mcG\times[1]$ under the 
functor $\bfg \times [1] \: [n]\times [1] \to \mcG \times [1]$. 
We visualize $[n]\times [1]$ as
     \[\xymatrix@C=10pt@R=12pt@M=6pt{ 
         (0,1) \ar[r]        & (1,1) \ar[r]         & \ar[r] \cdots &(n,1)\\
         (0,0) \ar[r] \ar[u] & (1,0) \ar[r] \ar[u]  & \ar[r] \cdots &
         (n,0)   \ar[u] }\]
Its fundamental class  is the alternating sum of the $(n+1)$-cells
     \[\xymatrix@C=10pt@R=12pt@M=6pt{ 
                       & &   (r,1) \ar[r]               & \cdots  \ar[r] &(n,1)\\
         (0,0) \ar[r]  & \cdots  \ar[r] &(r,0) \ar[u]   &  &   }   \]
in  $[n]\times [1]$, for $0\leq r \leq n$. Therefore, 
   \begin{equation}
     h_{a,f}(\bfg) = \underset{0\leq r \leq n}{\sum} (-1)^r
     f(g_1,\ldots,g_r,a^{-1},\ad_a{g_{r+1}},\ldots,\ad_a{g_{n}}).
   \end{equation}

The following proposition can be proved using a variant of Stokes' 
formula for cochains.

\begin{prop}{\label{P:homotopy}}
The graded map $h_{-,a}\: \oC^{\bullet+1}(G, M) \to \oC^{\bullet}(G,M)$ 
is a chain homotopy between the chain maps 
     \[\id,(-)^a \: \oC^{\bullet}(G,M) \to \oC^{\bullet}(G,M).\]
That is,
     \[ h_{a,df} + d(h_{a,f}) = f^a - f\]
for every $(n+1)$-cochain $f$. In particular, if $f$ is an $(n+1)$-cocycle, 
then  $d(h_{a,f}) = f^a - f$.  
\end{prop}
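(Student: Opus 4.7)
The plan is to carry out a categorical version of the classical topological "prism argument" used to show that homotopic maps induce the same map on cohomology. The cylinder is literally the product category $\mcG\times[1]$ together with the functor $H_a$, and the homotopy operator $h_{a,-}$ is morally "integration over the fibre $[1]$." The identity $h_{a,df}+d(h_{a,f})=f^a-f$ is then a direct consequence of Stokes' formula applied to the boundary of the prism.

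First I would record the categorical Stokes' formula: for any functor $\varphi\colon\mcD\to\mcC$, any cochain $\phi$ on $\mcC$, and any chain $\sigma$ in $\mcD$, one has $\varphi^*(d\phi)(\sigma)=d(\varphi^*\phi)(\sigma)=(\varphi^*\phi)(\partial\sigma)$. This is essentially the definition of $d$, together with the naturality of the pullback map (\ref{Eq:1}). Next, for an $n$-cell $\bfg$ in $\mcG$, I would write out the canonical prism decomposition of the fundamental class of $[n]\times[1]$ as the alternating sum $\sum_{r=0}^n(-1)^r\sigma_r$ of $(n+1)$-simplices, where $\sigma_r$ threads the vertical edge $\iota$ through the $r$-th slot. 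Pushing forward by $H_a\circ(\bfg\times[1])$ gives $H_a(\sigma_r)=(g_1,\dots,g_r,a^{-1},\ad_ag_{r+1},\dots,\ad_ag_n)$, which exactly matches the summands in the given explicit formula for $h_{a,f}$.

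The heart of the argument is then the standard prism boundary identity
\[
\partial(\bfg\times[1]) \;=\; \bfg\times\{1\}\;-\;\bfg\times\{0\}\;-\;(\partial\bfg)\times[1]
\]
in the chain complex of $\mcG\times[1]$, with the signs produced by shuffling the alternating sums in the prism decomposition with the alternating-sum simplicial boundary. Applying the cochain $H_a^*f$ to both sides and invoking Stokes yields
\[
h_{a,df}(\bfg)\;=\;(H_a^*f)\bigl(\partial(\bfg\times[1])\bigr)\;=\;f\bigl(\ad_a\bfg\bigr)\cdot(\text{twist})\;-\;f(\bfg)\;-\;h_{a,f}(\partial\bfg),
\]
and recognising the last term as $\pm d(h_{a,f})(\bfg)$ minus the internal-face contributions of $d h_{a,f}$ gives the homotopy identity. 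The cocycle clause is immediate: if $df=0$ then $h_{a,df}=0$, so $d(h_{a,f})=f^a-f$.

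The main subtlety I expect to have to handle with care is the bookkeeping of source objects, since $\mcG\times[1]$ has two objects while $\mcG$ has only one. On the top face $\bfg\times\{1\}$, the functor $H_a$ conjugates the arrows but the source vertex is mapped into $\mcG$ through the edge $H_a(\iota)=a^{-1}$; when the boundary identity is written in terms of the $(n{+}1)$-simplex $\sigma_n$, the $r=n$ contribution to $h_{a,f}$ absorbs the leading $G$-action coefficient of $d$ and outputs precisely $a^{-1}\cdot f(\ad_a\bfg)=f^a(\bfg)$. Getting this twist to appear with the right sign, rather than a bare $f(\ad_a\bfg)$, is the delicate part of the computation and is the one place where the categorical (as opposed to purely topological) nature of the setup really has to be used. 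Everything else is a straightforward expansion that can be carried out term-by-term as a cross-check.
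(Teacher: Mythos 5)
The paper does not actually prove Proposition~\ref{P:homotopy}; it says only that it ``can be proved using a variant of Stokes' formula for cochains'' and defers details to a forthcoming article of B.N.\ Your prism-plus-Stokes plan is therefore exactly the route the paper gestures at, not a different one, and the proposition itself is correct (it can be verified directly in low degrees from the explicit formula $h_{a,f}(\bfg)=\sum_{r=0}^n(-1)^rf(g_1,\ldots,g_r,a^{-1},\ad_a g_{r+1},\ldots,\ad_a g_n)$).

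Two places in your sketch would not survive being written out, and both are precisely where the word ``variant'' lives. First, the identity $d(\varphi^*\phi)(\sigma)=(\varphi^*\phi)(\partial\sigma)$ is not literally true for a nontrivial coefficient module: the cochain differential on $\oC^{\bullet}(\mcC,M)$ carries the twist $\act{g_1}{(f(g_2,\ldots))}$ on its leading term, and no untwisted boundary pairing $\phi(\partial\sigma)$ reproduces that. You do flag the twist later as ``the delicate part,'' but the Stokes step you invoke already depends on it silently. The clean repair is to run the prism homotopy at the level of the bar resolution $B_\bullet\to\Z$ by free $\Z[G]$-modules, where the boundary genuinely is untwisted because the $G$-action sits in the free-module coefficients; the prism gives a $\Z[G]$-linear chain homotopy between $\mathrm{id}$ and the map induced by $\ad_a$, and dualising via $\Hom_{\Z[G]}(-,M)\simeq\oC^{\bullet}(G,M)$ makes the $a^{-1}$ in $f^a$ fall out of $\Z[G]$-linearity automatically rather than being ``absorbed'' at one slot. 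Second, your final bookkeeping sentence misattributes the ledger: after expanding each $(df)(H_a(\sigma_r))$ into faces, the interior faces of the prism cancel entirely \emph{within} $h_{a,df}(\bfg)$, the side faces appear once in $h_{a,df}(\bfg)$ and once with opposite sign in $d(h_{a,f})(\bfg)$ and cancel \emph{between} the two, and only the top and bottom faces --- contributing $f^a(\bfg)$ and $-f(\bfg)$, both from $h_{a,df}(\bfg)$ --- survive. There is no ``$d(h_{a,f})(\bfg)$ minus the internal-face contributions''; $d(h_{a,f})(\bfg)$ has no interior faces to subtract. With those two corrections the approach is sound.
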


\subsection{Composing natural transformations}
Given an $(n+1)$-cochain  $f$, and elements $a,b \in G$, we can construct three 
$n$-cochains:
$h_{a,f}$, $h_{b,f}$ and $h_{ab,f}$. A natural question to ask is whether
these three cochains satisfy a cocycle condition. It turns out that the answer
is yes, but only up to a coboundary $dh_{a,b,f}$. Below we explain how 
$h_{a,b,f}$ is constructed. In fact, we construct cochains $h_{a_1,\ldots,a_k,f}$, for
any $k$ elements $a_i \in G$, $1\leq i \leq k$, and study their relationship.

Let $f \in \oC^{n+k}(G,M)$ be an $(n+k)$-cochain. Let 
$\bfa=(a_1,\ldots,a_k)\in G^{\times k}$. Consider the category $\mcG\times [k]$,
      \[\xymatrix@C=30pt@R=12pt@M=3pt{ 0 \ar[r]^{\iota_0} \ar@(ur,ul)[]_G 
          & 1 \ar@(ur,ul)[]_G    \ar[r]^(0.37){\iota_1} 
          & \ \ \cdots \ \ \ar[r]^(0.6){\iota_{k-1}} & k  \ar@(ur,ul)[]_G.}\]

Let $H_{\bfa} \: \mcG\times [k] \to \mcG$ be the functor such that 
$\iota_i \mapsto a_{k-i}^{-1}$ and $H_{\bfa}|_{\{0\}}=\id_G$. 
(So, $H_{\bfa}|_{\{k-i\}}=\ad_{a_{i+1}\cdots a_k}$.) Define
$h_{\bfa,f} \in \oC^{n}(G,M)$ by
    \begin{equation}{\label{Eq:haf}}
      h_{\bfa,f}(\bfg)=f(H_{\bfa}(\bfg\times[k])).
    \end{equation}
Here, $\bfg \in \mcC^{[n]}$ is an $n$-cell in $\mcG$, so $\bfg\times[k]$ is an  
$(n+k)$-chain in $\mcG\times[k]$.

To be more precise, we are using the notation $\bfg\times[k]$ for  the image of 
the fundamental class of $[n]\times [k]$ in $\mcG\times[k]$ under the 
functor $\bfg \times [k] \: [n]\times [k] \to \mcG \times [k]$. We visualize
$[n]\times [k]$ as
     \[\xymatrix@C=10pt@R=12pt@M=6pt{ 
         (0,k) \ar[r]        & (1,k) \ar[r]         & \ar[r] \cdots &  (n,k)        \\
           \vvdots\ar[u]     &   \vvdots \ar[u]     &               & \vvdots\ar[u] \\
         (0,1) \ar[r] \ar[u] & (1,1) \ar[r] \ar[u]  & \ar[r] \cdots &  (n,1) \ar[u] \\
         (0,0) \ar[r] \ar[u] & (1,0) \ar[r] \ar[u]  & \ar[r] \cdots &  (n,0) \ar[u] }\]
Its fundamental class is the $(n+k)$-chain
       \[\underset{P}\sum (-1)^{|P|}P,\]
where $P$ runs over (length $n+k$) paths starting from $(0,0)$ and ending in 
$(n,k)$. Note that such paths correspond to $(k,n)$ shuffles; $|P|$ stands for 
the parity of the shuffle (which is the same as the number of squares above the 
path in the $n\times k$ grid).

The most economical way to describe the relations between various $h_{\bfa,f}$ 
is in terms of the cohomology complex of the right module 
    \[\bbM^{\bullet}:=\uHom\left(\oC^{\bullet}(G,M),\oC^{\bullet}(G,M)\right).\]
Here, $\uHom$ stands for the enriched hom in the category of chain complexes, 
and the right action of $G$ on $\bbM^{\bullet}$ 
is induced from the right action $f \mapsto f^a$ of $G$ on the
$\oC^{\bullet}(G,M)$ sitting on the right. 
The differential on $\bbM^{\bullet}$ is defined by
    \[d_{\bbM^{\bullet}}(u)=(-1)^{|u|} u\circ d_{\oC^{\bullet}(G,M)}-
           d_{\oC^{\bullet}(G,M)}\circ u,\]
where $|u|$ is the degree of the homogeneous $u \in \oC^{\bullet}(G,M)$.

Note that, for every $\bfa \in G^{\times k}$, we have $h_{\bfa,f} \in \bbM^{-k}$.
This defines a $k$-cochain  on $G$ of degree $-k$ with values in $\bbM^{\bullet}$,
    \[h^{(k)}\: \bfa \ \mapsto h_{\bfa,-}, \ \bfa \in G^{\times k}.\]
We set $h^{(-1)}:=0$.  Note that $h^{(0)}$ is the element in $\bbM^{0}$
corresponding to the identity map $\id \: \oC^{\bullet}(G,M) \to \oC^{\bullet}(G,M)$.
  
The relations between various $h_{\bfa,f}$ can be packaged in a simple differential
relation. As in the case $k=0$ discussed in Proposition \ref{P:homotopy}, 
this proposition can be proved using a variant of Stokes' formula for cochains.

\begin{prop}{\label{P:relations}}
For every $k \geq -1$, we have $d_{\bbM^{\bullet}}(h^{(k+1)})=d(h^{(k)})$.
\end{prop}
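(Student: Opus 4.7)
The identity is a higher-dimensional generalisation of Proposition \ref{P:homotopy} (the case $k=0$), and I would prove it by the same Stokes-type argument, now applied to the prism $[n]\times[k+1]$. Evaluate both sides at $\bfa=(a_1,\ldots,a_{k+1})\in G^{k+1}$, at an $(n+k)$-cochain $f$, and at an $n$-cell $\bfg$; everything reduces to evaluating $f$ or $df$ on the pushforward of sub-chains of $[n]\times[k+1]$ along the composite functor
\[
\Phi \;=\; H_{\bfa}\circ(\bfg\times\id_{[k+1]}) \: [n]\times[k+1]\longrightarrow \mcG.
\]
Unfolding the definition of $d_{\bbM^{\bullet}}$ on a degree $-(k+1)$ element, the LHS becomes
\[
(-1)^{k+1}\, df\bigl(\Phi_{*}([n]\times[k+1])\bigr) \;-\; d(h_{\bfa,f})(\bfg).
\]

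The main tool is a Stokes-type identity $df(\Phi_{*}C)=f(\partial_{\mathrm{bar}}(\Phi_{*}C))$, where $\partial_{\mathrm{bar}}$ is the bar differential on chains of $\mcG$ (encoding the left action of the leading morphism of each cell on its $0$th face). Applied to the fundamental class of the prism and using the standard decomposition
\[
\partial([n]\times[k+1]) \;=\; (\partial[n])\times[k+1] \;+\; (-1)^n[n]\times(\partial[k+1]),
\]
the $(\partial[n])\times[k+1]$ piece produces $d(h_{\bfa,f})(\bfg)$: this works because $H_{\bfa}|_{\{0\}}=\id$, so the action factor from the bar differential on $d_0\bfg$ at the base of the prism is precisely the standard $g_1$-action factor appearing in $d(h_{\bfa,f})(\bfg)$. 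Multiplied by $(-1)^{k+1}$, this cancels the second term of the LHS.

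What remains are the cap contributions from $[n]\times(\partial[k+1])$, which I would match term-by-term with the face terms of $(dh^{(k)})(\bfa)(f)(\bfg)$: the bottom face $d_{k+1}[k+1]$ restricts $H_{\bfa}$ to $H_{(a_2,\ldots,a_{k+1})}$, yielding $h^{(k)}(a_2,\ldots,a_{k+1})(f)(\bfg)$; the top face $d_0[k+1]$ restricts to $\ad_{a_{k+1}}\circ H_{(a_1,\ldots,a_k)}$, and the identity $f(\ad_a C)=\act{a}{f^a(C)}$ combined with the right $G$-action $u^a(f)=u(f)^a$ on $\bbM^{\bullet}$ converts this into the action-twisted term $h^{(k)}(a_1,\ldots,a_k)^{a_{k+1}}(f)(\bfg)$; each intermediate face $d_i[k+1]$ for $1\le i\le k$ restricts to $H_{\bfa_i}$, where $\bfa_i$ is obtained from $\bfa$ by multiplying the two consecutive entries straddling vertex $i$, yielding the corresponding middle term. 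The main obstacle is the sign bookkeeping---coordinating the shuffle signs in the prism's fundamental class, the $(-1)^n$ in the prism boundary decomposition, the alternation $(-1)^i$ inside $\partial[k+1]$, the sign convention of $d_{\bbM^{\bullet}}$, and the right-module convention of the group coboundary. Conceptually, $(h^{(k)})_{k\ge 0}$ is an Eilenberg--Zilber shuffle-type chain homotopy, and this identity says it is a cocycle in the total complex of the double complex $C^{\bullet}(G,\bbM^{\bullet})$ equipped with the group cohomology differential in one direction and the internal differential of $\bbM^{\bullet}$ in the other.
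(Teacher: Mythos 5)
Your plan follows the approach the paper intends: the paper's proof of Proposition \ref{P:relations} is the single sentence ``this proposition can be proved using a variant of Stokes' formula for cochains,'' and your prismatic Stokes argument over $[n]\times[k+1]$ — with the boundary decomposition into the side-face piece $(\partial[n])\times[k+1]$ yielding $d(h_{\bfa,f})$ and the caps $[n]\times(\partial[k+1])$ yielding the three groups of terms in $d(h^{(k)})$ — is precisely the argument being alluded to. The proposal is correct in outline, and your acknowledgment that the remaining work is sign and module-twist bookkeeping (in particular, for the top cap one should check that the restriction of $H_{\bfa}$ to $[n]\times\{1,\dots,k+1\}$ equals $H_{(a_1,\dots,a_k)}$ precomposed with $\ad_{a_{k+1}}\times\id_{[k]}$, and that the $a_{k+1}$-twist left over from the bar differential lands correctly against $f^{a_{k+1}}$) is honest and appropriate.
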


In the above formula,  the term $d_{\bbM^{\bullet}}(h^{(k+1)})$ means that we apply 
$d_{\bbM^{\bullet}}$ to the values (in $\bbM^{\bullet}$) of the cochain $h^{(k+1)}$.
The differential on the right hand side of the formula is the differential of the
cohomology complex $\oC^{\bullet}(G,\bbM^{\bullet})$ of the (graded) right $G$-module  
$\bbM^{\bullet}$.
 
More explicitly, let $f\in \oC^{n+k}(G,M)$ be an $(n+k)$-cochain. Then, 
Proposition \ref{P:relations} states that, for every $\bfa \in G^{\times (k+1)}$, 
we have the following equality of $n$-cochains:
  \begin{eqnarray*}
    (-1)^{(k+1)} h_{a_1,\ldots,a_{k+1},df}-dh_{a_1,\ldots,a_{k+1},f}
       = & h_{a_2,\ldots,a_{k+1},f} + \\
      & \underset{1\leq i \leq k}\sum(-1)^ih_{a_1,\ldots,a_ia_{i+1},\ldots,a_{k+1},f} + \\
      &  (-1)^{k+1}h_{a_1,\ldots,a_{k},f}^{a_{k+1}}.
  \end{eqnarray*}

\begin{cor}{\label{C:left}} 
Let $f\in \oC^{n+k}(G,M)$ be an $(n+k)$-cocycle. Then, for every
$\bfa \in G^{\times (k+1)}$, the $n$-cochain 
    \[h_{a_2,\ldots,a_{k+1},f} 
         + \underset{1\leq i \leq k}\sum(-1)^ih_{a_1,\ldots,a_ia_{i+1},\ldots,a_{k+1},f}  
         + (-1)^{k+1}h_{a_1,\ldots,a_{k},f}^{a_{k+1}}\]
is a coboundary. In fact, it is the coboundary of $-h_{a_1,\ldots,a_{k+1},f}$.       
\end{cor}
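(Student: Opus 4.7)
The plan is to derive the corollary directly from Proposition \ref{P:relations} by specialising to the cocycle case. The entire statement is really a repackaging of $d_{\bbM^{\bullet}}(h^{(k+1)})=d(h^{(k)})$ evaluated at $(a_1,\ldots,a_{k+1})\in G^{\times (k+1)}$ and tested against $f$; once both sides are unpacked as explicit $n$-cochains, the equation $df=0$ kills one term and what remains is precisely the claim.

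First, I would unpack the left-hand side. Since $h_{\bfa,-}\in \bbM^{-(k+1)}$, the differential on $\bbM^{\bullet}$,
\[
d_{\bbM^{\bullet}}(u)=(-1)^{|u|}\,u\circ d_{\oC^{\bullet}(G,M)} - d_{\oC^{\bullet}(G,M)}\circ u,
\]
yields, when applied to $f\in \oC^{n+k}(G,M)$,
\[
\bigl(d_{\bbM^{\bullet}} h_{\bfa,-}\bigr)(f) = (-1)^{k+1}\,h_{\bfa,df}\ -\ d\,h_{\bfa,f}.
\]
Next, I would unpack the right-hand side using the standard cohomology differential for the right $G$-module $\bbM^{\bullet}$ (whose right action is $u\cdot a=u^a$): evaluated at $(a_1,\ldots,a_{k+1})$ and tested against $f$, this produces exactly
\[
h_{a_2,\ldots,a_{k+1},f}\ +\ \sum_{1\leq i\leq k}(-1)^i\,h_{a_1,\ldots,a_ia_{i+1},\ldots,a_{k+1},f}\ +\ (-1)^{k+1}\,h_{a_1,\ldots,a_k,f}^{\,a_{k+1}},
\]
which is the $n$-cochain in the statement of the corollary.

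Setting these equal via Proposition \ref{P:relations} gives the explicit identity that follows its statement. When $f$ is a cocycle, $df=0$, so the term $(-1)^{k+1}h_{\bfa,df}$ vanishes and the identity reduces to $-d\,h_{a_1,\ldots,a_{k+1},f}=(\text{the displayed $n$-cochain})$, i.e.\ the $n$-cochain is the coboundary of $-h_{a_1,\ldots,a_{k+1},f}$, as claimed.

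The only step requiring care is a sign audit: one must check that the factor $(-1)^{|u|}$ in $d_{\bbM^{\bullet}}$ applied to $|u|=-(k+1)$ yields $(-1)^{k+1}$, and that the right $G$-action on $\bbM^{\bullet}$ is indeed the one induced by $f\mapsto f^a$ on the target copy of $\oC^{\bullet}(G,M)$, so that the final term of $d h^{(k)}$ produces $(-1)^{k+1}h_{a_1,\ldots,a_k,f}^{\,a_{k+1}}$ and not some other twist. These are conventions already fixed in the appendix, so the verification is routine; the real content is in Proposition \ref{P:relations}, and the corollary is genuinely only a one-line consequence once that proposition is in hand.
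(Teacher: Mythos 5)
Your proposal is correct and follows exactly the route the paper takes: the paper states the explicit form of Proposition~\ref{P:relations} (the displayed identity of $n$-cochains immediately before the corollary), and Corollary~\ref{C:left} is then obtained by setting $df=0$. Your sign check of the $(-1)^{|u|}=(-1)^{-(k+1)}=(-1)^{k+1}$ factor and of the right-module differential producing $(-1)^{k+1}h_{a_1,\ldots,a_k,f}^{a_{k+1}}$ in the last slot is the only non-trivial verification, and you have done it correctly.
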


\begin{exam}
Let us examine Corollary \ref{C:left} for small values of $k$. 
 \begin{itemize}
   \item[i)] For $k=0$, the statement is that, for every cocycle $f$,
    $f-f^a$ is a coboundary. In fact, it is the coboundary of $-h_{f,a}$.
    We have already seen this in Proposition \ref{P:homotopy}.
   \item[ii)] For $k=1$, the statement is that, for every cocycle $f$, the cochain
       \[h_{b,f}-h_{ab,f}+h_{a,f}^b \]  
    is a coboundary. In fact, it is the coboundary of $-h_{a,b,f}$.
 \end{itemize}
\end{exam}

\subsection{Explicit formula for $h_{a_1,\ldots,a_k,f}$}{\label{SS:Eplicit}}  
Let $f \: G^{\times (n+k)} \to M$ be an $(n+k)$-cochain, and 
$\bfa:=(a_1,a_2,\ldots,a_{k})  \in G^{\times k}$. Then, by (\ref{Eq:haf}),
the effect of  the $n$-cochain $h_{a_1,\ldots,a_k,f}$ on 
an $n$-tuple $\bfx:=(x_0,x_1,\ldots,x_{n-1}) \in G^{\times n}$
is given by:
     \[h_{a_1,\ldots,a_k,f}(x_0,x_1,\ldots,x_{n-1})
           =\underset{P}\sum (-1)^{|P|}f(\bfx^{P}),\]
where $\bfx^{P}$ is the $(n+k)$-tuple obtained by the following procedure.

Recall that $P$ is a path from $(0,0)$ to $(n,k)$ in the $n$ by $k$ grid. The
$l^{\text{th}}$ component $\bfx^{P}_l$ of $\bfx^{P}$ is determined by the 
$l^{\text{th}}$ segment on the path $P$. Namely, suppose that the coordinates 
of the starting point of this segment are  $(s,t)$. Then, 
       \[\bfx^{P}_l=a_{k-t}^{-1}\] 
if the segment is vertical, and 
       \[\bfx^{P}_l=(a_{k-t+1}\cdots a_k)x_s(a_{k-t+1}\cdots a_k)^{-1},\] 
if the segment is horizontal. Here, we use the convention that $a_0=1$.

The following example helps visualize $\bfx^{P}$:
   \[\xymatrix@C=63pt@R=20pt@M=3pt{ 
              &   &  &  &  &       \\
              &   &  &  &  &  \ar[u]^{a_1^{-1}}       \\
              &   &  &  \ar[r]_{(a_3a_4)x_3(a_3a_4)^{-1}}            &  
              \ar[r]_{(a_3a_4)x_4(a_3a_4)^{-1}} & \ar[u]^{a_2^{-1}}\\
              &   &  \ar[r]_{a_4x_2a_4^{-1}}    &  \ar[u]^{a_3^{-1}} &  &\\
              \ar[r]_{x_0} & \ar[r]_{x_1}       &  \ar[u]^{a_4^{-1}} &  &  &  &}\]
The corresponding term is
     \[-f(x_0,x_1,a_4^{-1},a_4x_2a_4^{-1},
         a_3^{-1},(a_3a_4)x_3(a_3a_4)^{-1},(a_3a_4)x_4(a_3a_4)^{-1},
                                       a_2^{-1},a_1^{-1}).\]
The sign of the path is determined by the parity  of the number of squares 
in the $n$ by $k$ grid that sit above the path $P$ (in this case $15$).

\section*{Acknowledgements}
M.K. owes a tremendous debt of gratitude to many people for conversations, communications, and tutorials about a continuous stream of facts and theories that he barely understands even now. These include John Baez, Alan Barr, Bruce Bartlett, Jean Bellissard, Philip Candelas, Ted Chinburg, John Coates, Tudor Dimofte,  Dan Freed, Sergei Gukov, Jeff Harvey, Yang-Hui He, Lars Hesselholt, Mahesh Kakde, Kazuya Kato, Philip Kim, Kobi Kremnitzer, Julien March\'e, Behrang Noohi, Xenia de la Ossa,  Jaesuk Park, Alexander Schekochihin, Alexander Schmidt, Urs Schreiber, Graeme Segal,  Adam Sikora, Peter Shalen, Romyar Sharifi, Junecue Suh, Kevin Walker, and Andrew Wiles. All authors are grateful to Dr. Kwang-Seob Kim for his invaluable help in producing a number of the examples. They are also grateful to the authors of \cite{BCGKPT} for sending a preliminary version of their paper.
\ms

M.K.  was supported by EPSRC grant EP/M024830/1.

J.P. was supported by Samsung Science \& Technology Foundation (SSTF-BA1502-01).

H.Y. was supported by IBS-R003-D1.

\end{document}